\newcommand{\stkout}[1]{\ifmmode\text{\sout{\ensuremath{#1}}}\else\sout{#1}\fi}
\numberwithin{equation}{section}
\newtheoremstyle{break}{\topsep}{\topsep}{\itshape}{}{\bfseries}{.}{\newline}{}
\newtheoremstyle{exampl}{\topsep}{\topsep}{\upshape}{}{\bfseries}{.}{\newline}{}
\theoremstyle{plain}
\newtheorem{thm}{Theorem}[section]
\newtheorem{lem}[thm]{Lemma}  
\newtheorem{prop}[thm]{Proposition}  
\newtheorem{ass}[thm]{Assumption} 
\theoremstyle{break}
\theoremstyle{definition}
\newtheorem{defi}[thm]{Definition}
\newtheorem{ex}[thm]{Example}
\theoremstyle{exampl}
\theoremstyle{remark}
\newtheorem{rem}[thm]{Remark}
\def\state{E}
\DeclareMathOperator{\E}{\mathbb E}
\definecolor{mygray}{gray}{.5}
\title{On time-inconsistent stopping problems and mixed strategy stopping times}
\author{S\"oren Christensen\footnote{Department of Mathematics, Kiel University, Germany. E-mail address: christensen@math.uni-kiel.de.} \and Kristoffer Lindensj\"o\footnote{Department of Mathematics, Stockholm University, Sweden. E-mail address: kristoffer.lindensjo@math.su.se.}}
\begin{document}

\maketitle

\begin{abstract}
A game-theoretic framework for time-inconsistent stopping problems where the time-inconsistency is due to the consideration of a non-linear function of an expected reward is developed. 
A class of mixed strategy stopping times that allows the agents in the game to 
jointly choose the intensity function of a Cox process is introduced and motivated. 
A subgame perfect Nash equilibrium is defined.
The equilibrium is characterized and other necessary and sufficient equilibrium conditions including a smooth fit result are proved. 
Existence and uniqueness are investigated.
A mean-variance and a variance problem are studied. The state process is a general one-dimensional It\^{o} diffusion. 
\end{abstract}

\noindent \textbf{Keywords:} Conditional Poisson process, Cox process, Equilibrium stopping time, Mean-variance criterion, Mixed strategies, Optimal stopping, Subgame perfect Nash equilibrium, Time-inconsistency, Variance criterion.\\

\noindent \textbf{AMS MSC2010:} 
60G40; 60J70; 91A10; 91A25; 91G80; 91B02; 91B51.

\section{Introduction} \label{intro}
Consider a diffusion $X$ and the classical problem of choosing a stopping time $\tau$ that maximizes
\[
\E_x(h(X_\tau)),
\]
where $h$ is a nice deterministic function. Recall that the solution to this problem is consistent in the sense that the optimal rule for stopping, i.e. 'stop the first time that $X$ enters the stopping region', is independent of the initial state $x$. 
In this paper, we generalize this setting by considering non-linear nice deterministic functions $f$ and $g$ and the problem of choosing a stopping time $\tau$ that maximizes
\begin{align}
 \E_x(f(X_\tau))+g(\E_x(h(X_\tau))).
\end{align}
The optimal stopping rule for this problem will, in contrast, typically depend on the initial state $x$, which means that it will not generally satisfy Bellman's principle of optimality. In the literature this is known as \emph{time-inconsistency}.
Generalizations of classical stochastic control problems leading to time-inconsistency are discussed in Section \ref{reasons-time-incon} below.

Time-inconsistent problems are typically studied using one of the following approaches: 

\begin{itemize}
\item The \emph{game-theoretic approach} studied in the present paper, which means formulating the problem as a game and look for equilibrium stopping times, see Definition \ref{def:mix_strat} and Section \ref{equilibrium-disc} below.

\item  The \emph{pre-commitment approach}, which means formulating the problem for a fixed initial state and allowing the corresponding optimal stopping rule to depend on that initial state.

\item The \emph{dynamic optimality approach}, developed in \cite{pedersen2016optimal}. See also \cite{christensen2017finding} for a short description.
\end{itemize}

In \cite{christensen2017finding} we developed a game-theoretic framework for time-inconsistent stopping problems covering  endogenous habit formation and non-exponential discounting. In the present paper, a game-theoretic framework for time-inconsistent stopping problems that can handle e.g. mean-variance problems is developed. See Section \ref{reasons-time-incon} for an explanation of these terms. 
In the present paper we also define mixed strategy stopping times by allowing the agents in the game to jointly choose the intensity function of a Cox process that is used as a randomization device for the stopping decision, see Definition \ref{def:mix_strat} and the motivation in Section \ref{equilibrium-disc}. This type of mixed strategy stopping time appears to be novel, although other types of mixed strategies in stopping games have been considered and the Cox process has been used in other ways in different kinds of stopping games, see Section \ref{prev-lit}. 

The rest of the paper is organized as follows: In Section \ref{sec:problem-formulation} we formulate the time-inconsistent stopping problem in detail and give the definitions of mixed strategy stopping time and equilibrium. These definitions are motivated and discussed in Section \ref{equilibrium-disc}.
In Section \ref{sec:equilibrium-conditions} the equilibrium is characterized and other results with necessary and sufficient conditions for equilibrium are proven, these are the main results of the present paper, see
Theorem \ref{main-thm-imm-stp},  
Theorem \ref{smoothfit-thm}, 
Theorem \ref{sufficient-cond-thm} and
Theorem \ref{nec-cond-THM-for-C}. 
In Section \ref{applications} the developed theory is applied. 
Section \ref{applications:var} studies a variance problem  and underlies the necessity of using mixed strategy equilibria. 
Section \ref{applications:mean-var} studies a mean-variance problem. In particular, Theorem \ref{mean-varthm} shows that the mean-variance problem has no equilibrium for some parameter specifications, implying that we cannot generally expect equilibria to exist.
Section \ref{subsec:example-2-equi} studies an example with two different equilibria, implying that we cannot in general expect equilibrium uniqueness.

\subsection{Previous literature} \label{prev-lit}
 The game-theoretic approach to time-inconsistent problems was first considered by Strotz \cite{strotz} in a seminal paper studying dynamic utility maximization under non-exponential discounting. The approach was further developed by 
Selten \cite{selten1965spieltheoretische,selten1975reexamination}  
who introduced the notion of subgame perfect Nash equilibrium, which is a refinement of the Nash equilibrium suitable for dynamic games. 

Recently, there has been a substantial effort to develop the literature on the game-theoretic approach to time-inconsistent control problems. 
The main theoretical result of time-inconsistent Markovian stochastic control is a characterization of an equilibrium as a solution to a generalized HJB equation called the extended HJB system, see \cite{tomas-continFORTH,tomas-disc,lindensjo2017timeinconHJB}. 
Recently a considerable literature using the extended HJB system to study time-inconsistent control problems has emerged, examples include \cite{bensoussan2014time,bjork2014mean,he2013optimal,kronborg2015inconsistent,li2013optimal}.

The development of the literature on the game-theoretic approach to time-inconsistent stopping problems is in an earlier stage. Recent papers include \cite{bayraktar2018,
christensen2017finding,
Duraj2017Optimal,
huang2018time,
huang2017stopping,
huang2017optimal, 
huang2017optimalDISC}. Section \ref{reasons-time-incon} describes references studying particular time-inconsistent stopping problems, while Section \ref{equilibrium-disc} contains a further review focusing on the choice of definition for pure and mixed strategies and equilibria. Section \ref{applications} contains further references to papers studying mean-variance and variance problems. For short surveys of time-inconsistent stopping problems we also refer to \cite{bayraktar2018,christensen2017finding,pedersen2016optimal}.

\deleted[id=kri,remark={This paragraph has been moved to Section 2.1 in order to coherently consider a comment of a referee.}]{In \cite{touzi2002continuous} a continous-time Dynkin game with mixed strategies defined as randomized stopping times is studied. It is instructive to note that the number of players in the game of this paper is finite while the number of players in the game of the present paper is uncountable; in the framework of the present paper it is the Cox process construction of mixed stopping strategies that makes it possible to identify mixed equilibrium strategies.}

Recent papers on time-inconsistent stopping problems and the dynamic optimality and pre-commitment approaches include \cite{miller2017nonlinear,pedersen2016optimal}.

In \cite{nutz2018mean}, a mean-field optimal stopping game for e.g. bank-runs is studied. The default time is modeled as the first jump time of a given Cox process. In \cite{dupuis2002optimal,guo2005stopping}, optimal stopping problems where stopping can only occur at exogenously determined Poisson jump times are studied.

\subsection{Reasons for time-inconsistency}\label{reasons-time-incon}
In this section we formulate simple examples to give an idea about the type of time-inconsistent problems that are studied in finance and economics. In particular, we see that problems of mean-variance type can be studied in the framework of the present paper, whereas problems of endogenous habit formation and non-exponential discounting type can be studied in the general framework of \cite{christensen2017finding}.

\textbf{Mean-variance optimization/utility}: Suppose the process $X$ corresponds to the price of an asset that an investor wants to sell. A plan for the asset sale is formalized as a stopping time. The utility of the investor, given the current price $x$ and an asset sale plan $\tau$, is, 
\[\E_x(X_\tau) - \gamma \mbox{Var}_x(X_\tau), \mbox{ with $\gamma>0$.}\] 
The economic interpretation is that the there is a tradeoff between the expected selling price and risk measured by variance. The parameter $\gamma$ corresponds to risk aversion. The game-theoretic approach to stopping problems of this type is studied in Section \ref{applications:mean-var} and \cite{bayraktar2018}.

\textbf{Endogenous habit formation}: Consider the asset selling problem but suppose the investor for a given current price $x$ considers, 
\begin{align} \label{Endogenous-problem}
\E_x(F(X_\tau,x)),
\end{align}
where $F(\cdot,x)$ is a standard utility function for each fixed $x$. The economic interpretation is that the investor  dynamically updates a habitual preference for the expected utility of the selling price that is based on the current asset price. The game-theoretic approach to stopping problems of this type is studied in \cite{christensen2017finding}.

\textbf{Non-exponential discounting}: Consider the asset selling problem but suppose the investor at time $t$ given the current price $x$ considers, 
\[\E_{t,x}(\delta(\tau-t)F(X_\tau))\added[id=kri,remark={}]{,}\deleted[id=kri,remark={}]{.}\]
where $F$ is a standard utility function and $\delta$ is a discounting function 
--- that is, $\delta:[0,\infty)\rightarrow [0,1]$ is non-increasing with $\delta(0)=1$  --- 
which cannot be written as an exponential discounting function. The game-theoretic approach to stopping problems of this type is studied in \cite{huang2018time,
huang2017stopping,
huang2017optimal,
huang2017optimalDISC}. Problems of this type can be studied in the general framework of \cite{christensen2017finding} by considering the time-space process.

\section{Problem formulation} \label{sec:problem-formulation}
Let $(\Omega,\mathcal{F},(\mathcal{F}_t)_{t\geq 0},\mathbb{P}_x)$ be a filtered probability space carrying a one-dimensional Wiener process $W$. Let $X$ be a one-dimensional diffusion living on an open interval $\state =(\alpha,\beta)$, where $-\infty\leq\alpha\leq\beta\leq\infty$, which is the unique strong solution to the SDE
\begin{align} \label{the-diffusion}
dX_t = \mu(X_t)dt + \sigma(X_t)dW_t, \enskip X_0 = x.
\end{align}
The coefficients $\mu:E\rightarrow\mathbb{R}$ and $\sigma:E\rightarrow (0,\infty)$ are continuous and satisfy conditions guaranteeing the existence of a unique strong solution, see e.g. \cite{karatzas2012brownian}. Moreover, for each continuous function $\lambda:E\rightarrow [0,\infty)$ the filtered probability space is assumed to carry an $X$-associated Cox process denoted by $N^\lambda$, meaning that $N^\lambda$ is a Poisson process with intensity corresponding to $\lambda(X_t)$ conditional on the natural filtration generated by $X$, see e.g. \cite[Sec. 6.6]{bielecki2013credit}. 
It is assumed that the filtration $(\mathcal{F}_t)_{t\geq 0}$ satisfies the usual conditions 
and that $x \mapsto  \mathbb{P}_x(F)$ is measurable for each $F \in \mathcal{F}$. The associated expectations are denoted by $\mathbb{E}_x$. 
It is assumed that a measurable time shift operator $\theta$ with 
$X_\tau \circ \theta_{\tau_h} = X_{\tau \circ \theta_{\tau_h}  + \tau_h}$ exists, where $\tau$ is a, possibly infinite, stopping time (with respect to $(\mathcal{F}_t)_{t\geq 0}$) and
\[\tau_h:=\inf\{t\geq 0:|X_t-X_0|\geq h\}.\]
Now consider the functions $f,h:E \rightarrow \mathbb{R}$ and $g:\mathbb{R} \rightarrow \mathbb{R}$ satisfying Assumption \ref{ass:general-lemmas}  (below) and the problem of finding a stopping time $\tau$ that maximizes
\begin{align} \label{the-problem}
J_{\tau}(x): = \E_x(f(X_\tau)) + g(\E_x(h(X_\tau))). 
\end{align}
\begin{rem} \label{convention-tau-infty} We use the convention that $h(X_\tau):= \limsup_{t\rightarrow\infty}h(X_t)$ on $\{\tau = \infty\}$ and similarly for $f$. We assume that the limits $g(\infty):= \lim_{x\rightarrow \infty}g(x)$ and $g(-\infty):= \lim_{x\rightarrow -\infty}g(x)$  exist, see Assumption \ref{ass:general-lemmas}.
\end{rem}

Let us   specify which type of stopping times are admissible (Definition \ref{def:mix_strat}) and then give the equilibrium definition (Definition \ref{def:equ_stop_time}). For a fixed stopping time $\tau$ we define the functions $\phi_\tau$ and $\psi_\tau$ by,
\[\phi_\tau(x)=\E_x(f(X_\tau)) \mbox{ and } \psi_\tau(x)=\E_x(h(X_\tau)).\]

\begin{defi} \label{def:mix_strat} Consider a continuous function $\lambda:E \rightarrow [0,\infty)$ and the corresponding Cox process $N^\lambda$. Let $\tau^\lambda :=  \inf\{t\geq 0: N^\lambda_t\neq N^\lambda_{t-}\}$. Let $C \subset E$ be an open set and let 
$\tau^C :=  \inf\{t\geq 0: X_t \notin C\}$. 
Then $\tau^{\lambda,C} :=  \tau^\lambda\wedge \tau^C$ is said to be a mixed 
(Markov) strategy stopping time.  A mixed strategy stopping time $\tau^{\lambda,C}$ is said to be admissible if the function $J_{\tau^{\lambda,C}}$ in \eqref{the-problem} is well-defined and the functions $\phi_{\tau^{\lambda,C}}$ and $\psi_{\tau^{\lambda,C}}$ are continuous. %
The space  of admissible mixed strategy stopping times is denoted by $\mathcal{N}$.
\end{defi}
Usually we write $\phi_{\lambda,C}$ instead of $\phi_{\tau^{\lambda,C}}$ and similarly for $\psi_{\tau^{\lambda,C}}$ and $J_{\tau^{\lambda,C}}$. We remark that the requirement that $\phi_{\lambda,C}$ and $\psi_{\lambda,C}$ must be continuous in order for $\tau^{\lambda,C}$ to be admissible is a technical condition. For $\tau^{\lambda,C},\tau^{\eta,D}\in\mathcal{N}$ we will use the notation
\begin{align} 
\tau^{\lambda,C} \diamond \tau^{\eta,D}(h)= I_{\{\tau^{\eta,D} \leq \tau_h\}}\tau^{\eta,D} + I_{\{\tau^{\eta,D} > \tau_h\}}({\tau^{\lambda,C}}\circ \theta_{\tau_h}+\tau_h).
\end{align}

\begin{defi}\label{def:equ_stop_time} A stopping time $\hat\tau  \in\mathcal{N}$ is said to be a (mixed Markov strategy) equilibrium stopping time if the equilibrium condition
\begin{align}
\liminf_{h\searrow 0}\frac{J_{\hat\tau}(x)-J_{\hat\tau\diamond \tau^{\eta,D}(h)}(x)}{\E_x(\tau_h)}&\geq 0\label{eqdef2}
\end{align} 
is satisfied for each $\tau^{\eta,D}\in\mathcal{N}$ and each $x\in\state$. If $\hat\tau$ is an equilibrium stopping time then $J_{\hat\tau}(x),x\in\state$, is said to be the corresponding equilibrium value function.
\end{defi}
 For a motivation of these definitions see Section \ref{equilibrium-disc}. This paper is devoted to the question of how to find equilibrium stopping times of the type in Definition \ref{def:equ_stop_time}.

We denote the characteristic operator of $X$ by $A_X$, i.e. for any function $f:E \rightarrow \mathbb{R}$,
\begin{align*}
A_Xf(x) = \lim_{h\searrow 0}\frac{\E_x(f(X_{\tau_h}))-f(x)}{\E_x(\tau_h)},
\end{align*}
whenever this expression exists. Recall that if $f\in\mathcal{C}^2(E)$ then
\begin{align*}
A_Xf(x)  = \mu(x)f'(x) +  \frac{1}{2}\sigma^2(x)f''(x).
\end{align*}
Throughout the paper we assume that the functions $f,g$ and $h$ in \eqref{the-problem} satisfy the following conditions: 
\begin{ass} \label{ass:general-lemmas} \quad
\begin{itemize}
\item $f,h\in \mathcal{C}^2(E)$ and $g\in \mathcal{C}^3(\mathbb{R})$.
\item $g(\infty)$ and $g(-\infty)$ exist in $[-\infty,\infty]$.
\item  $f$ is either bounded from below or above on $\state$. This also holds for $h$.
\end{itemize} 
\end{ass}

\subsection{Motivation and discussion of the definitions of mixed strategy stopping time and equilibrium}\label{equilibrium-disc}
We remark that this section is only of motivational value. Let us first describe how to interpret the time-inconsistent stopping problem \eqref{the-problem} as an intrapersonal game. The non-linearity in \eqref{the-problem} implies that Bellman's optimality principle does not generally hold which means that if a stopping rule, e.g. 'stop the first time $X$ exits $C$', is optimal (in the usual sense) given the starting value $x$, then that stopping rule will generally not be optimal given another starting value $y$  --- note that this is easiest to see for the problem \eqref{Endogenous-problem} where the payoff depends directly on the current state $x$. Based on this, the game-theoretic approach is to view \eqref{the-problem} as a stopping problem for a person who decides when to stop the process $X$ but whose preferences change as the current state $x$ changes. This person is viewed as comprising different versions of herself, one version for each $x$, and these $x$-versions are viewed as agents who play an \emph{intrapersonal} dynamic game against each other regarding when to stop the process $X$. This interpretation is inline with the invention of Strotz and the literature on time-inconsistent control and stopping problems, see Section \ref{prev-lit} and Section \ref{reasons-time-incon}.

\begin{ex} \label{motiv-exa} To clarify\deleted[id=kri,remark={We have shortened this example due to a referee comment.}]{ the game and especially} the notion\deleted[id=kri,remark={}]{s} of pure and mixed strategy stopping 
times\deleted[id=kri,remark={}]{ as defined in the present paper}, 
we here formulate a simple example in discrete 
time\deleted[id=kri,remark={}]{ (which can be embedded into a continous time setting)}\added[id=kri,remark={}]{, in line with the definitions of \cite{bayraktar2018} (cf. the definition of time-homogeneous randomized stopping time in \cite[Section 2]{bayraktar2018})}. 
Suppose $X$ is a \added[id=kri,remark={}]{discrete time} Markov chain\added[id=kri,remark={}]{ living }\deleted[id=kri,remark={}]{that lives}on $\{1,2,3\}$ and consider a variance problem, i.e. suppose 
$J_{\tau}(x) = \mbox{Var}_x(X_\tau)$. 
In this game there are three $x$-agents, $x=1,2,3$\deleted[id=kri,remark={}]{. The} \added[id=kri,remark={}]{whose }
potential \added[id=kri,remark={}]{individual} actions \deleted[id=kri,remark={}]{of each $x$-agent}are $\{$\emph{stop, continue}$\}$. 
For game theory in general, a pure strategy determines the action of an agent based only on payoff relevant information. 
A pure strategy for an $x$-agent is therefore a decision to stop or continue based on $x$. 
Moreover, a strategy profile describes a fully specified configuration of the strategies of all agents in a game; 
an example \deleted[id=kri,remark={}]{in this case }is
$\{$\emph{$1$-agent stops, $2$-agent stops, $3$-agent continues}$\}$.
It is therefore natural to interpret stopping times of the type $\tau^C$ as pure strategy profiles\deleted[id=kri,remark={}]{, and we call them pure strategy stopping times}. 
For game theory in general, a mixed strategy is a strategy where an agent uses a randomization device to select a pure strategy.\deleted[id=kri,remark={}]{In the present game each $x$-agent has only two possible actions, \emph{stop} or \emph{continue}.} 
Hence, an $x$-agent choosing a mixed strategy corresponds to this $x$-agent choosing a biased coin which is flipped every time the state process is at $x$ and whose outcome when flipped determines whether the $x$-agent stops or continues. An example of a mixed strategy profile is\deleted[id=kri,remark={}]{ therefore}:  
$\{$\emph{$1$-agent stops with probability $0.1$, $2$-agent stops with probability $0.8$, $3$-agent stops with probability $1$}$\}$.
\end{ex}
\deleted[id=kri,remark={}]{The natural continous time interpretation of stopping according to sequential coin-flipping with coins whose bias depends on the current state $x$ is to stop at the first jumping time of an $X$-associated Cox process. 
With Example \ref{motiv-exa} in mind we thus interpret stopping times of the type $\tau^{\lambda,C}$ as mixed strategy profiles for time-inconsistent stopping problems, and call them mixed strategy stopping times.}

\added[id=kri,remark={}]{In continous time we interpret the stopping time $\tau^{\lambda,C}$ (Definition \ref{def:mix_strat}) as a mixed strategy profile for our time-inconsistent stopping time, which we will now motivate. 
The interpretations of the discrete and continous time mixed strategies, between which there are, as we shall see, differences, are as follows.  
In discrete time (cf. Example \ref{motiv-exa}), the interpretation is that if we at a time $t-1$ have not stopped and at time $t$ observe some state $x\in C$ then we flip a coin with a bias that depends on $x$ and stop at $t$ if the outcome is, say, heads. 
In continous time, the interpretation is that if we at time $t$ have not stopped and observe some state $x\in C$ then we stop during $(t,t+dt]$ with probability $\lambda(x)dt$; note that this interpretation relies on $\lambda$ being continuous, cf. Definition \ref{def:mix_strat}. 
Hence, although there are differences between the discrete and continous time definitions we see that, intuitively, also for the continous time definition holds that the $x$-agents use randomization in order to determine whether to stop or not, and in this sense it is appropriate to interpret $\tau^{\lambda,C}$ as a mixed strategy profile for our continous time time-inconsistent stopping problem.}

Let us now motivate our choice of equilibrium condition \eqref{eqdef2}\added[id=kri,remark={}]{, which is an adaptation of the classical equilibrium definition of time-inconsistent stochastic control, see Remark \ref{usual-eq} below.} 
In general, a subgame perfect Nash equilibrium is for a dynamic game a strategy profile that forms a Nash equilibrium at each point in time. The exact mathematical definition of a subgame perfect Nash equilibrium is to some extent a matter of choice and should be chosen in order to obtain a desirable economic or game-theoretic interpretation. For the stopping game of the present paper we 
\deleted[id=kri,remark={We now express this a bit differently, due to a referee comment, see also below.}]{have chosen the following equilibrium definition \eqref{eqdef2} in order to allow for the following interpretation}\added[id=kri,remark={}]{would like to define an equilibrium allowing for the following interpretation}: 
\emph{If each $x$-agent makes her stopping decision in accordance with the equilibrium stopping time $\hat \tau$, then no $x$-agent would prefer to deviate from $\hat \tau$, in the sense that,}

\begin{enumerate}[label=(\roman*)]
\item \label{item:mot0}  \emph{no $x$-agent would prefer to use a different intensity at the present $x$ than the one prescribed by $\hat \tau$; in particular,} 
\item \label{item:mot1} \emph{no $x$-agent would prefer to stop when $\hat \tau$ prescribes continuing, and}
\item \label{item:mot2} \emph{no $x$-agent would prefer to continue when $\hat \tau$ prescribes stopping.}
\end{enumerate}
\added[id=kri,remark={}]{Let us first consider}
\deleted[id=kri,remark={}]{Note that for} 
the alternative equilibrium condition,
\begin{align}
J_{\hat\tau}(x)\geq f(x) + g(h(x))\deleted[id=kri,remark={}]{,}\added[id=kri,remark={}]{.} \label{alt-eq-def}
\end{align}
\added[id=kri,remark={}]{Although \eqref{alt-eq-def} allows for the interpretations 
\ref{item:mot0}-\ref{item:mot2} it does so partly because an $x$-agent who deviates at $x$ by not stopping immediately when ${\hat\tau}$ prescribes stopping immediately does not affect the actual outcome, because $X$ has continous paths.
Hence, $X$ having continous paths implies that the strategy of stopping immediately at each $x$ is always an equilibrium under condition \eqref{alt-eq-def}.  
An advantage with the equilibrium condition of the present paper \eqref{eqdef2} compared to that of \eqref{alt-eq-def} is that it allows the $x$-agent the possibility of deviating by continuing in a way which generally \emph{does} affect the actual outcome, even though $X$ has continous paths.} 
\deleted[id=kri,remark={}]{we find that item \ref{item:mot1} of the interpretation is warranted whereas there is nothing which warrants item \ref{item:mot2}. In particular, in order to give an $x$-agent the possibility of deviating by \emph{continuing} we need to give her the possibility of deciding what happens to the process $X$ on some interval around $x$. In other words, it does not make sense to say that 'the $x$-agent continues at $x$' unless $X$ avoids stopping on some \emph{interval around} $x$.}\added[id=kri,remark={}]{Note, however, that for \eqref{eqdef2} -- as well as for other first-order equilibrium conditions -- holds that we cannot know whether an equilibrium corresponds to a maximum or another type of stationary point. This was in the context of time-inconsistent stochastic control noted in \cite[Remark 3.5]{tomas-continFORTH} and the reason in our setting is of course that the numerator in \eqref{eqdef2} can be negative for each fixed $h>0$ and still be in line with \eqref{eqdef2} by vanishing with order 
$o(\E_x(\tau_h))$. Hence, in general \ref{item:mot0}--\ref{item:mot2} are reasonable interpretations of \eqref{eqdef2} only in a restricted sense consistent with this observation and corresponding intuitively to e.g. an $x$-agent's criterion for preferring 
to stop when an equilibrium $\hat\tau$ prescribes stopping is that the instantaneous expected rate of change (relative to $\E_x(\tau_h)$) obtained by deviating is necessarily non-positive.} 
\added[id=kri,remark={}]{Using the observation in \cite[Remark 3.5]{tomas-continFORTH} as a starting point \cite{huang2018strong} introduces -- in a time-inconsistent stochastic control framework -- the notion of \emph{strong equilibrium}, which adapted to the problem of the present paper corresponds to the condition that there should exist a fixed $\bar{h}>0$  such that for each $h \in [0,\bar{h}]$ holds that the numerator of \eqref{eqdef2} is non-negative. The notion of strong equilibrium for time-inconsistent control is also studied in \cite{he2018dynamic,he2019Equilibrium}.}

\deleted[id=kri,remark={}]{With the above in mind we find the interpretation if equilibrium condition \eqref{eqdef2} ensures no $x$-agent wants to deviate from the equilibrium strategy $\hat \tau$ by using a stopping time $\tau^{\eta,D}$ during the infinitesimally short time interval $[0,\tau_h]$ as long as every other agent plays  $\hat \tau$; neither by stopping when $\hat \tau$ prescribes continuing, nor by continuing when $\hat \tau$ prescribes stopping, nor by using a different intensity than the one prescribed by $\hat \tau$. \deleted[id=kri,remark={}]{Hence, the interpretation of Definition \ref{def:equ_stop_time} is in line with \ref{item:mot0}--\ref{item:mot2}.}}

Mixed equilibria for time-inconsistent stopping are also considered in \cite{bayraktar2018} in which a mean-variance problem and a mean-standard deviation problem are studied in a discrete time Markovian setting. Pure Markov stopping times are, in analogy with the present paper, defined as entry times into sets in the state space. The definition of mixed strategy stopping times (there also called \added[id=kri,remark={}]{time-homogeneous randomized stopping times}\deleted[id=kri,remark={}]{equilibrium randomized stopping strategies}) 
\deleted[id=kri,remark={}]{and }\added[id=kri,remark={}]{is in line with the definition of Example \ref{motiv-exa} and} the definition of equilibrium 
\deleted[id=kri,remark={}]{are }
\added[id=kri,remark={}]{is a} discrete time version\deleted[id=kri,remark={}]{s} of the\deleted[id=kri,remark={}]{corresponding} definition\deleted[id=kri,remark={}]{s} in the present paper. As in the present paper, the authors find that mixed equilibria coincide with pure equilibria for the mean-variance problem. 
In \cite{huang2017optimalDISC} non-exponential discounting is studied in a discrete time Markovian setting. The considered stopping times are analogous to the pure stopping strategies $\tau^C$ of the present paper. The definition of equilibrium is a discrete time version of the equilibrium definition in the present paper.
In \cite{huang2018time}, non-exponential discounting is studied in an It\^{o} diffusion setting. Also here the considered stopping times are analogous to the pure stopping strategies of the present paper. 
\added[id=kri,remark={}]{An equilibrium in \cite{huang2018time} is defined as a fixed point of an operator $\Theta$ which describes the game-theoretic reasoning of each agent, where intuitively $\Theta$ takes as input a proposed stopping policy and produces as output the best responses of each agent; see \cite[Sec. 3.1]{huang2018time} and in particular \cite[Definition 3.7]{huang2018time}. In particular, it holds that deviation from a proposed equilibrium strategy at a particular starting point $x$ does not change the outcome when the underlying process has continuous paths and that stopping immediately at each $x$ is always an equilibrium in this case, cf. \cite[Remark 3.9]{huang2018time}; however, we remark that if the underlying process has jumps then the strategy of stopping immediately at each $x$ may no longer be an equilibrium.}
\deleted[id=kri,remark={}]{The equilibrium definition differs from the one in the present paper in the sense that it does not offer the interpretation that an $x$-agent always has the possibility to deviate by continuing, in particular it holds that \emph{stopping immediately at each state} is always an equilibrium [20,Remark 3.9].}In \cite{huang2017optimal}, non-exponential discounting and the strategies and equilibrium  of \cite{huang2018time} are studied for a more general one-dimensional Markovian process. An optimality criterion for equilibria is also proposed and studied. 
In \cite{huang2017stopping} a general time-inconsistent stopping problem is studied in the setting of a 
\added[id=kri,remark={This paper has been updated}]{strong Markov process}
\deleted[id=kri,remark={}]{stationary state process}. Both \emph{naive agents}, who continuously re-optimize, and \emph{sophisticated agents}, i.e. the approach of the present paper, are studied. The strategies and the equilibrium for the sophisticated agents are defined as in \cite{huang2018time} and immediate stopping is always an equilibrium, cf. \cite[Remark 2.6]{huang2017stopping}.

\added[id=kri,remark={}]{The equilibrium definitions of e.g. \cite{bayraktar2018,huang2017optimalDISC} are, as we have mentioned, discrete time versions of the equilibrium in the present paper and \cite{christensen2017finding}, but they are also discrete time versions of the equilibrium in e.g. \cite{huang2018time}, and the discrete time equilibrium definition seems unanimous.}

\begin{rem} The choice of equilibrium definition is a modeling choice which should be made in accordance with the economic or game-theoretic interpretation that one wants. The existence of essentially different equilibrium definitions in the literature is therefore natural.
\end{rem}

\begin{rem} \label{usual-eq} The equilibrium condition \eqref{eqdef2} is in line with the one in \cite{christensen2017finding} and inspired by time-inconsistent stopping problems in financial economics, see e.g. \cite{ebert2017discounting}. It can also be seen as an adaptation of the usual equilibrium definition for time-inconsistent stochastic control problems, see \cite{tomas-continFORTH,tomas-disc,lindensjo2017timeinconHJB} and the references therein; the main similarity between these two equilibrium definitions is that they can be said to be first order conditions, with the interpretation that the $x$-agent decides what happens on an infinitesimal interval around her. In the case of stochastic control of a diffusion this is necessary since changing the control only at a point has no effect on the control process. Note however, that the reason we have chosen a first-order equilibrium condition in the present paper is not because of mathematical\deleted[id=kri,remark={}]{ly} necessity, but because it corresponds to the interpretation that we want, 
as \deleted[id=kri,remark={}]{discussion}\added[id=kri,remark={}]{discussed} above. \added[id=kri,remark={}]{We also mention that since the selection of mixed strategy stopping times $\tau^{\lambda,C}$ involves control of $\lambda$ follows that the stopping problem of the present paper becomes in this sense also a control problem.}
\end{rem}

\begin{rem} \label{touzirem} To avoid confusion we want to reiterate that a mixed strategy stopping time $\tau^{\lambda,C}$ is in the present paper \emph{not} a strategy of an agent in the game, instead it is a specification of the strategies of \emph{all} agents in the game (i.e. a strategy profile). Similarly, an equilibrium strategy $\hat\tau$ is a full specification of the strategies of all agents.
\end{rem}

\begin{rem} \label{newremxyxy}
\added[id=kri,remark={This paragraph was previously included in Section 1.1 and has been moved here in order to coherently consider a comment of a referee}]{In \cite{touzi2002continuous} a continuous-time Dynkin game with mixed strategies defined as randomized stopping times is studied. It is instructive to note that the number of players in the game of this paper is finite while the number of players in the game of the present paper is uncountable; in the framework of the present paper it is the Cox process construction of mixed stopping strategies that makes it possible to identify mixed equilibrium strategies.} 
\added[id=kri,remark={}]{In order to choose an appropriate mathematical definition of mixed strategy one must consider the particular game being studied. In particular, the definition of a mixed strategy should have the interpretation that the agents in the game that is being studied use randomization to select pure strategies. It is therefore not surprising that the definition for a mixed strategy in e.g. \cite{touzi2002continuous} is different from that of the present paper. 
Intuitively, in the game of \cite{touzi2002continuous} each of the two agents uses randomization to select a stopping time and the mixed strategy profile that they jointly select is hence a pair of randomized stopping times; whereas in the present paper all $x$-agents jointly select a mixed strategy profile in the form of a stopping time of the kind $\tau^{\lambda,C}$.}
\end{rem}

\begin{rem} 
The type of stopping time we consider in Definition \ref{def:mix_strat} can be seen \added[id=kri,remark={}]{to be} a particular type of randomized stopping time, \added[id=kri,remark={}]{in particular $\tau^{\lambda}$ can be identified with the randomized stopping time $\inf\{t\geq0: \int_0^t\lambda(X_s)ds\geq E_1\}$ where $E_1$ is a unit exponential random variable independent of $X$} see e.g. \cite[Sec. 2]{lando1998cox}. 
\end{rem}

\section{Equilibrium conditions}\label{sec:equilibrium-conditions} 
This section contains a characterization of the equilibrium, see
Theorem \ref{main-thm-imm-stp}. It also contains other necessary and sufficient conditions for equilibrium, see
Theorem \ref{smoothfit-thm}, 
Theorem \ref{sufficient-cond-thm} and
Theorem \ref{nec-cond-THM-for-C}. 
These are the main results of the present paper. They rely on the results found in the appendix which mainly contain explicit expressions for the type of limit that is found in the left side of the equilibrium condition \eqref{eqdef2} for different values of the initial state, see  Lemma \ref{prop-calc-eq-cond1}, Lemma \ref{prop-calc-eq-cond2} and Lemma \ref{smooth-fit-lemma}. The results in the appendix rely to a large extent on arguments similar to those in the proof of Lemma \ref{main-lemma} and standard Taylor expansion.  
Theorem \ref{smoothfit-thm} and Theorem \ref{sufficient-cond-thm} rely on Proposition \ref{newprop}.

\begin{lem} \label{main-lemma} For any $\tau^{\lambda,C},\tau^{\eta,D} \in \mathcal{N}$ and $x \in D$,
\begin{align*}
\lim_{h\searrow 0}\frac{\phi_{{\tau^{\lambda,C}}\circ \theta_{\tau_h}+\tau_h}(x)-\phi_{\tau^{\lambda,C} \diamond \tau^{\eta,D}(h)}(x)}{\E_x(\tau_h)} = \eta(x)(\phi_{\lambda,C}(x)-f(x)).
\end{align*}  
\end{lem}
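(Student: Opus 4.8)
The plan is to reduce the numerator to a single expectation supported on $\{\tau^\eta\le\tau_h\}$ and then expand the integrand around $x$. First fix $h$ small enough that the closed $h$-ball about $x$ lies in the open set $D$; since $X$ has continuous paths this forces $\tau^D>\tau_h$, so $\tau^{\eta,D}\wedge\tau_h=\tau^\eta\wedge\tau_h$ and, on $\{\tau^{\eta,D}\le\tau_h\}$, in fact $\tau^{\eta,D}=\tau^\eta$; in particular $\{\tau^{\eta,D}\le\tau_h\}=\{\tau^\eta\le\tau_h\}\in\mathcal{F}_{\tau_h}$. On the complementary event the stopping times $\tau^{\lambda,C}\circ\theta_{\tau_h}+\tau_h$ and $\tau^{\lambda,C}\diamond\tau^{\eta,D}(h)$ coincide by definition of $\diamond$, so, writing $\sigma_1:=\tau^{\lambda,C}\circ\theta_{\tau_h}+\tau_h$, the numerator equals $\E_x[I_{\{\tau^\eta\le\tau_h\}}(f(X_{\sigma_1})-f(X_{\tau^\eta}))]$. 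Using the shift identity $X_{\tau^{\lambda,C}}\circ\theta_{\tau_h}=X_{\sigma_1}$, the strong Markov property of the pair $(X,N^\lambda)$, and $\{\tau^\eta\le\tau_h\}\in\mathcal{F}_{\tau_h}$, I would replace $\E_x[I_{\{\tau^\eta\le\tau_h\}}f(X_{\sigma_1})]$ by $\E_x[I_{\{\tau^\eta\le\tau_h\}}\phi_{\lambda,C}(X_{\tau_h})]$, obtaining
\[
\phi_{\sigma_1}(x)-\phi_{\tau^{\lambda,C}\diamond\tau^{\eta,D}(h)}(x)=\E_x\big[I_{\{\tau^\eta\le\tau_h\}}\big(\phi_{\lambda,C}(X_{\tau_h})-f(X_{\tau^\eta})\big)\big].
\]

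Next I would split $\phi_{\lambda,C}(X_{\tau_h})-f(X_{\tau^\eta})=(\phi_{\lambda,C}(x)-f(x))+(\phi_{\lambda,C}(X_{\tau_h})-\phi_{\lambda,C}(x))+(f(x)-f(X_{\tau^\eta}))$ and divide by $\E_x(\tau_h)$, handling the three pieces separately. The main piece is $(\phi_{\lambda,C}(x)-f(x))\,\mathbb{P}_x(\tau^\eta\le\tau_h)$, and the crux is the limit $\mathbb{P}_x(\tau^\eta\le\tau_h)/\E_x(\tau_h)\to\eta(x)$. Here I would use that, conditionally on the path of $X$, $\tau^\eta$ is exponential with cumulative hazard $\Lambda_t:=\int_0^t\eta(X_s)\,ds$, so $\mathbb{P}_x(\tau^\eta\le\tau_h)=\E_x[1-e^{-\Lambda_{\tau_h}}]=\E_x\big[\int_0^{\tau_h}\eta(X_s)e^{-\Lambda_s}\,ds\big]$. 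On $\{s\le\tau_h\}$ one has $|X_s-x|\le h$, so $\eta(X_s)\in[\eta(x)-\omega(h),\eta(x)+\omega(h)]$ with $\omega(h)\to0$ a modulus of continuity of $\eta$, while $e^{-\Lambda_s}\in[e^{-M\tau_h},1]$ with $M$ an upper bound for $\eta$ on that ball; this sandwiches $\mathbb{P}_x(\tau^\eta\le\tau_h)$ between $(\eta(x)-\omega(h))\E_x[\tau_h e^{-M\tau_h}]$ and $(\eta(x)+\omega(h))\E_x(\tau_h)$. Dividing by $\E_x(\tau_h)$ and invoking the standard exit-time estimates that $\E_x(\tau_h)$ is of exact order $h^2$ and $\E_x(\tau_h^2)=O(h^4)$ --- so that $\E_x[\tau_h e^{-M\tau_h}]/\E_x(\tau_h)\to1$ --- gives the limit $\eta(x)$.

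For the two remaining pieces I would use that $|X_{\tau_h}-x|=h$ and that $|X_{\tau^\eta}-x|\le h$ on $\{\tau^\eta\le\tau_h\}$; continuity of $\phi_{\lambda,C}$ (part of admissibility) and of $f$ then give $|\phi_{\lambda,C}(X_{\tau_h})-\phi_{\lambda,C}(x)|\le\delta_1(h)$ and $|f(X_{\tau^\eta})-f(x)|\le\delta_2(h)$ with $\delta_1(h),\delta_2(h)\to0$, so each piece is bounded in modulus by $\delta_i(h)\,\mathbb{P}_x(\tau^\eta\le\tau_h)/\E_x(\tau_h)$, which tends to $0$ by the previous step. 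Adding the three contributions yields the asserted limit $\eta(x)(\phi_{\lambda,C}(x)-f(x))$.

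The step I expect to be the main obstacle is the asymptotics $\mathbb{P}_x(\tau^\eta\le\tau_h)/\E_x(\tau_h)\to\eta(x)$: one must control the exponential killing term uniformly in a manner compatible with dividing by the second-order quantity $\E_x(\tau_h)$, which is exactly where the exit-time moment bounds enter. A secondary technical point is the legitimate use of the strong Markov property for the composed stopping time $\tau^{\lambda,C}\circ\theta_{\tau_h}+\tau_h$ --- that the Cox process restarts afresh after $\tau_h$ --- which rests on the standing assumptions on the probability space and on the shift operator $\theta$.
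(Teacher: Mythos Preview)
Your argument is correct and follows essentially the same route as the paper: reduce to $\tau^\eta$ using $\tau_h<\tau^D$, observe the numerator lives on $\{\tau^\eta\le\tau_h\}$, apply the strong Markov property at $\tau_h$ to turn $f(X_{\sigma_1})$ into $\phi_{\lambda,C}(X_{\tau_h})$, and pass to the limit via continuity and the exit-time moment estimate $\E_x(\tau_h^2)/\E_x(\tau_h)\to 0$. The only cosmetic difference is that the paper first conditions on the $X$-path to obtain the single integral $\E_x\big[\int_0^{\tau_h}\eta(X_t)e^{-\int_0^t\eta(X_s)ds}(\phi_{\lambda,C}(X_{\tau_h})-f(X_t))\,dt\big]$ and takes the limit of that, whereas you isolate $\mathbb{P}_x(\tau^\eta\le\tau_h)/\E_x(\tau_h)\to\eta(x)$ and bound the remainder by moduli of continuity---an equivalent reorganization of the same estimates.
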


\begin{proof} Recall that $D$ is open by definition of $\mathcal{N}$. This implies that for any $x\in D$ there exists a constant $\bar{h}>0$ such that $\tau_h< \tau^D$ for each $0<h\leq\bar{h}$ (a.s.). Hence, for $0<h\leq\bar{h}$, 
\begin{align}
\tau^{\lambda,C} \diamond \tau^{\eta,D}(h) 
& = I_{\{\tau^{\eta,D} \leq \tau_h\}}\tau^{\eta,D} + I_{\{\tau^{\eta,D} > \tau_h\}}({\tau^{\lambda,C}}\circ \theta_{\tau_h}+\tau_h)\\
& = I_{\{\tau^{\eta} \leq \tau_h\}}\tau^{\eta} + I_{\{\tau^{\eta} > \tau_h\}}({\tau^{\lambda,C}}\circ \theta_{\tau_h}+\tau_h).
\end{align}
It follows that 
\begin{align} 
f\left(X_{\tau^{\lambda,C} \diamond \tau^{\eta,D}(h)}\right) 
& =I_{\{\tau^{\eta,D} \leq \tau_h\}}f\left(X_{\tau^{\lambda,C} \diamond \tau^{\eta,D}(h)}\right)+
I_{\{\tau^{\eta,D} > \tau_h\}}f\left(X_{\tau^{\lambda,C} \diamond \tau^{\eta,D}(h)}\right)\\
&= I_{\{\tau^\eta \leq \tau_h\}}f\left(X_{\tau^\eta}\right) +  I_{\{\tau^\eta > \tau_h\}}f\left(X_{{\tau^{\lambda,C}}\circ \theta_{\tau_h}+\tau_h}\right).
\label{dev-strat-prop}
\end{align}
Using the above, the properties of the Poisson process and by conditioning on the filtration generated by $X$, we obtain, for $0<h\leq \bar{h}$, (here $\eta_t:= \eta(X_t)$) 
\begin{align*}
& \phi_{{\tau^{\lambda,C}}\circ \theta_{\tau_h}+\tau_h}(x)-\phi_{\tau^{\lambda,C} \diamond \tau^{\eta,D}(h)}(x)\\
\enskip &= \E_x\left(
I_{\{\tau^\eta \leq \tau_h\}}
\left(f\left(X_{{\tau^{\lambda,C}}\circ \theta_{\tau_h}+\tau_h}\right)-f\left(X_{\tau^\eta}\right)
\right)\right)\\
\enskip &= \E_x\left(\int_0^\infty\eta_te^{-\int_0^t\eta_sds}
I_{\{t\leq \tau_h\}}
\left(f\left(X_{{\tau^{\lambda,C}}\circ \theta_{\tau_h}+\tau_h}\right)-f\left(X_{t}\right)
\right)dt\right)\\
\enskip &= 
\E_x\left(
f\left(X_{{\tau^{\lambda,C}}\circ \theta_{\tau_h}+\tau_h}\right)
\int_0^{\tau_h}\eta_t e^{-\int_0^t\eta_s ds}dt
-\int_0^{\tau_h}\eta_t e^{-\int_0^t\eta_s ds}f\left(X_t\right)dt
\right).
\end{align*}
By conditioning on $\mathcal{F}_{\tau_h}$ and the strong Markov property we thus obtain 
\begin{align}
& \phi_{{\tau^{\lambda,C}}\circ \theta_{\tau_h}+\tau_h}(x)-\phi_{\tau^{\lambda,C} \diamond \tau^{\eta,D}(h)}(x)\\
  &=\E_x\left(\int_0^{\tau_h}\eta_t e^{-\int_0^t\eta_s ds}dt\E_x\left(
f\left(X_{{\tau^{\lambda,C}}\circ \theta_{\tau_h}+\tau_h}\right)
|\mathcal{F}_{\tau_h}\right)-\int_0^{\tau_h}\eta_t e^{-\int_0^t\eta_s ds}f\left(X_t\right)dt\right)\\
  &=\E_x\left(\int_0^{\tau_h}\eta_t e^{-\int_0^t\eta_s ds}dt \phi_{\lambda,C}\left(X_{\tau_h}\right)-\int_0^{\tau_h}\eta_t e^{-\int_0^t\eta_s ds}f\left(X_t\right)dt\right)\\
  &=\E_x\left(\int_0^{\tau_h}\eta_t e^{-\int_0^t\eta_s ds}(\phi_{\lambda,C}\left(X_{\tau_h}\right)-f\left(X_t\right))dt\right).
\label{proof-ref}
\end{align} 
Now use the continuity of the functions $f,\eta$, $\phi_{\lambda,C}$ and the paths of $X$, and that $X$ is bounded on $[0,\tau_h]$, to obtain 
\begin{align}
& \lim_{h\searrow 0}\frac{\phi_{{\tau^{\lambda,C}}\circ \theta_{\tau_h}+\tau_h}(x)-\phi_{\tau^{\lambda,C}\diamond \tau^{\eta,D}(h)}(x)}{\E_x(\tau_h)} \\
&\enskip = \lim_{h\searrow 0}\frac{\E_x\left(\int_0^{\tau_h}\eta(X_t)e^{-\int_0^t\eta(X_s)ds}\left(\phi_{\lambda,C}\left(X_{\tau_h}\right)-f\left(X_{t}\right)
\right)dt\right)}{\E_x(\tau_h)}\\
&\enskip= \eta(x)\left(\phi_{\lambda,C}(x)-f(x)\right). \label{limit-oct}
\end{align}
The last step in \eqref{limit-oct} contains a type of limit taking that is used throughout the paper, but which we prove only in what follows. In the rest of the proof we suppress the sub-index $\lambda,C$. First note that, 
\begin{align*}
&\left|\frac{\E_x\left(\int_0^{\tau_h}\eta(X_t)e^{-\int_0^t\eta(X_s)ds}\left(\phi\left(X_{\tau_h}\right)-f\left(X_{t}\right)
\right)dt\right)}{\E_x(\tau_h)} - \eta(x)\left(\phi(x)-f(x)\right)\right|\\
&\leq\frac{\left|\E_x\left(\int_0^{\tau_h}
\eta(X_t)\left(1-e^{-\int_0^t\eta(X_s)ds}\right)
\left(\phi\left(X_{\tau_h}\right)-f\left(X_{t}\right)\right) 
dt\right)\right|}{\E_x(\tau_h)}\\
&+\frac{\left|\E_x\left(\int_0^{\tau_h}\left( 
\eta(X_t)\left(\phi\left(X_{\tau_h}\right)-f\left(X_{t}\right)\right)
-\eta(x)\left(\phi(x)-f(x)\right) 
\right)dt\right)\right|}{\E_x(\tau_h)}.
\end{align*}
From the continuity of the functions $\eta,\phi,f$ and trajectories of $X$ follows, 
\begin{align*}
&\frac{\left|\E_x\left(\int_0^{\tau_h} \left(
\eta(X_t)\left(\phi\left(X_{\tau_h}\right)-f\left(X_{t}\right)\right)
-\eta(x)\left(\phi(x)-f(x)\right) \right)
dt\right)\right|}{\E_x(\tau_h)}\\
&\leq \sup_{y_1,y_2\in (x-h,x+h)}
\left|   
\eta(y_1)\left(\phi\left(y_2\right)-f\left(y_1\right)
\right) 
-\eta(x)\left(\phi(x)-f(x)\right)
\right| \rightarrow 0, \enskip \mbox{ as } \enskip h \searrow 0.
\end{align*}
Moreover, for 
$c_h:=\sup_{y_1,y_2\in (x-h,x+h)}\left|\eta(y_1)\left(\phi\left(y_2\right)-f\left(y_1\right)\right)\right|$ holds, 
\begin{align*}
&\frac{\left|\E_x\left(\int_0^{\tau_h}
\eta(X_t)\left(1-e^{-\int_0^t\eta(X_s)ds}\right)
\left(\phi\left(X_{\tau_h}\right)-f\left(X_{t}\right)\right) dt\right)\right|}{\E_x(\tau_h)}\\
&\leq c_h\frac{\left|\E_x\left(\int_0^{\tau_h}\left(1-e^{-\int_0^{\tau_h}\eta(X_s)ds}\right)dt\right)\right|}{\E_x(\tau_h)}\\
&= c_h\frac{\left|\E_x\left(   \left(1-e^{-\int_0^{\tau_h}\eta(X_s)ds}\right) \tau_h     \right)\right|}{\E_x(\tau_h)}\\
&\leq c_h\frac{\left|\E_x\left(  \tau_h   \int_0^{\tau_h}\eta(X_t)dt   \right)\right|}{\E_x(\tau_h)}\\
&\leq c_h \sup_{y\in (x-h,x+h)} \eta(y) \frac{\left|\E_x\left(   \tau_h^2  \right)\right|}{\E_x(\tau_h)} 
\rightarrow 0, \enskip \mbox{ as } \enskip h \searrow 0,
\end{align*}
where we used the fact that $1-e^{-y}\leq y$ for $y\geq 0$ and Lemma \ref{lemma-tau-h-oct}. From the observations above follows the last step in \eqref{limit-oct}. 
\end{proof}

We are now ready to present the first main result, which characterizes the equilibrium. 
\begin{thm} \label{main-thm-imm-stp}  A stopping time $\tau^{\lambda,C} \in \mathcal{N}$ is an equilibrium stopping time if and only if it is a solution to the following system, 
\begin{align}
J_{{\lambda,C}}(x) - f(x) - g(h(x))&\geq 0, \enskip \mbox{ for } x \in C, \label{mainthmcond1} \tag{I}\\
A_Xf(x) + g'(h(x))A_X h(x)  &\leq 0, \enskip \mbox{ for } x\in \mbox{int} (C^c),\label{mainthmcond2} \tag{II}\\
f(x)-\phi_{\lambda,C}(x)+ g'(\psi_{\lambda,C}(x))\left(h(x)-\psi_{\lambda,C}(x)\right)  &=0, 
\enskip \mbox{ for $x  \in C$  with $\lambda(x)>0$}, \label{mainthmcond3} \tag{III}\\
 f(x)-\phi_{\lambda,C}(x)+ g'(\psi_{\lambda,C}(x))\left(h(x)-\psi_{\lambda,C}(x)\right) &\leq 0, 
\enskip \mbox{ for $x  \in C$  with $\lambda(x)=0$}, \label{mainthmcond4}\tag{IV}\\
\liminf_{h\searrow 0}\frac{-a(x,h)}{\E_x(\tau_h)}
&\geq 0,
\enskip \mbox{for $x \in \partial C $,}\label{mainthmcond5}\tag{V}
\end{align}
where
\begin{align}
a(x,h):= 
\E_x\left(\phi_{ {\lambda,C}}(X_{\tau_h})\right)-\phi_{ {\lambda,C}}(x)
+ g\left( \E_x\left(\psi_{ {\lambda,C}}(X_{\tau_h})\right)\right)-g(\psi_{ {\lambda,C}}(x)). \label{newlabelKR}
\end{align}
\end{thm}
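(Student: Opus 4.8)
The plan is to decompose the equilibrium condition \eqref{eqdef2} according to the position of the initial state $x$ relative to the set $C$, and in each case to compute the relevant $\liminf$ explicitly using the appendix lemmas (Lemma~\ref{prop-calc-eq-cond1}, Lemma~\ref{prop-calc-eq-cond2}, Lemma~\ref{smooth-fit-lemma}) together with Lemma~\ref{main-lemma}. Note that $E$ is the disjoint union of $C$, $\operatorname{int}(C^c)$ and $\partial C$, so it suffices to show that, for a candidate $\tau^{\lambda,C}\in\mathcal{N}$, the condition
\[
\liminf_{h\searrow 0}\frac{J_{\hat\tau}(x)-J_{\hat\tau\diamond\tau^{\eta,D}(h)}(x)}{\E_x(\tau_h)}\geq 0\qquad\text{for all }\tau^{\eta,D}\in\mathcal{N}
\]
holds for every $x$ in each of these three sets if and only if the corresponding subset of conditions \eqref{mainthmcond1}--\eqref{mainthmcond5} holds.

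First I would handle $x\in\operatorname{int}(C^c)$: here $\hat\tau$ stops immediately, so $J_{\hat\tau}(x)=f(x)+g(h(x))$, and for small $h$ the deviation $\tau^{\eta,D}(h)$ only matters through whether the agent stops before $\tau_h$; a Taylor expansion (the content of the appendix lemma covering this case) shows the limit equals $-\big(A_Xf(x)+g'(h(x))A_Xh(x)\big)$, which is $\geq 0$ for all admissible deviations exactly when \eqref{mainthmcond2} holds. Second, for $x\in C$, the agent does not stop immediately under $\hat\tau$; writing $J_{\hat\tau}=\phi_{\lambda,C}+g\circ\psi_{\lambda,C}$ and applying Lemma~\ref{main-lemma} to both the $\phi$-part and the $\psi$-part (the latter composed with $g$ via a first-order Taylor expansion of $g$, legitimate since $g\in\mathcal C^3$), the limit becomes
\[
\eta(x)\big(f(x)-\phi_{\lambda,C}(x)+g'(\psi_{\lambda,C}(x))(h(x)-\psi_{\lambda,C}(x))\big),
\]
up to the extra term $J_{\lambda,C}(x)-f(x)-g(h(x))$ coming from the event $\{\tau^{\eta,D}\leq\tau_h\}$ on which the deviating agent stops. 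Requiring this to be $\geq 0$ for every continuous $\eta\geq 0$ (in particular for $\eta$ with $\eta(x)$ arbitrarily large or equal to $0$, and for $\eta=\hat\lambda$) forces both the sign conditions \eqref{mainthmcond3}--\eqref{mainthmcond4} and, separately, the nonnegativity \eqref{mainthmcond1}; conversely these together give the inequality. One must be a little careful that $\eta$ is only required continuous, so one cannot prescribe its value at $x$ in complete isolation — but by continuity one can make $\eta(x)$ any nonnegative number while keeping $\eta$ small elsewhere, which is enough to separate the conditions.

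Third, for $x\in\partial C$ I would invoke Lemma~\ref{smooth-fit-lemma}: here the candidate stops "at the boundary", the deviation region $D$ may or may not contain $x$, and the dominant term in the numerator of \eqref{eqdef2} turns out to be $-a(x,h)$ with $a$ as in \eqref{newlabelKR}, independent of $\eta$ — this is condition \eqref{mainthmcond5}. Finally I would assemble the three cases: the equilibrium condition holds at every $x\in E$ for every $\tau^{\eta,D}\in\mathcal N$ if and only if all five conditions \eqref{mainthmcond1}--\eqref{mainthmcond5} hold. The main obstacle I anticipate is bookkeeping rather than conceptual: correctly splitting the numerator of \eqref{eqdef2} into the "stop-before-$\tau_h$" and "survive-past-$\tau_h$" contributions, matching each to the right appendix lemma, and — in the $x\in C$ case — carefully arguing that quantifying over all continuous $\eta\geq 0$ yields exactly the pointwise conditions \eqref{mainthmcond1}, \eqref{mainthmcond3} and \eqref{mainthmcond4} and nothing stronger. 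The $g$-nonlinearity is handled uniformly by first-order Taylor expansion of $g$ around $\psi_{\lambda,C}(x)$, with the error controlled by $g\in\mathcal C^3$ and boundedness of $X$ on $[0,\tau_h]$, exactly as in the proof of Lemma~\ref{main-lemma}.
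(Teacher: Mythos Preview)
Your overall architecture matches the paper's: split $E=C\cup\operatorname{int}(C^c)\cup\partial C$ and invoke Lemma~\ref{prop-calc-eq-cond1}, Lemma~\ref{prop-calc-eq-cond2}, Lemma~\ref{smooth-fit-lemma} respectively. The treatment of $x\in\operatorname{int}(C^c)$ and $x\in\partial C$ is fine.

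There is, however, a genuine gap in the $x\in C$ case. Lemma~\ref{main-lemma} computes
\[
\lim_{h\searrow 0}\frac{\phi_{\tau^{\lambda,C}\circ\theta_{\tau_h}+\tau_h}(x)-\phi_{\tau^{\lambda,C}\diamond\tau^{\eta,D}(h)}(x)}{\E_x(\tau_h)},
\]
not $\lim\frac{\phi_{\lambda,C}(x)-\phi_{\tau^{\lambda,C}\diamond\tau^{\eta,D}(h)}(x)}{\E_x(\tau_h)}$. The difference is exactly $-A_X\phi_{\lambda,C}(x)=-\lambda(x)(\phi_{\lambda,C}(x)-f(x))$ by Lemma~\ref{main-lemma2}, and after adding the analogous $\psi$ contribution the correct limit (this is Lemma~\ref{prop-calc-eq-cond1}) is
\[
(\lambda(x)-\eta(x))\bigl\{f(x)-\phi_{\lambda,C}(x)+g'(\psi_{\lambda,C}(x))(h(x)-\psi_{\lambda,C}(x))\bigr\},
\]
with coefficient $\lambda(x)-\eta(x)$, not $\eta(x)$. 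This is not cosmetic: with your formula you cannot obtain \eqref{mainthmcond3}. The equality in \eqref{mainthmcond3} when $\lambda(x)>0$ comes precisely from being able to make $\lambda(x)-\eta(x)$ both positive (take $\eta(x)=0$) and negative (take $\eta(x)$ large), forcing the bracket to vanish; a coefficient $\eta(x)\geq 0$ of fixed sign yields only a one-sided inequality, and with your sign convention it would even be the wrong one for \eqref{mainthmcond4}.

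Separately, the ``extra term $J_{\lambda,C}(x)-f(x)-g(h(x))$'' does not arise as a correction inside the $x\in C\cap D$ limit. In the paper, condition \eqref{mainthmcond1} is obtained by a \emph{different} choice of deviation, namely $D=\emptyset$ (stop immediately), for which the numerator is constant in $h$ and equal to $J_{\lambda,C}(x)-f(x)-g(h(x))$. Likewise, in the sufficiency direction the paper splits according to whether $x\in D$ or $x\in D^c$; the case $x\in D^c$ (deviation stops at once) is where \eqref{mainthmcond1} is actually used, and your sketch omits this split.
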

See the appendix for a proof of Theorem \ref{main-thm-imm-stp}. We will use the following general result.

\begin{prop} \label{newprop} Consider a fixed $x\in \state$ and a function $k:E\rightarrow \mathbb{R}$. Suppose that there exists a constant $\bar{h}>0$ such that $k$ is $\mathcal{C}^2$ on $[x-\bar{h},x]$ and $[x,x+\bar{h}]$ and continuous on $[x-\bar{h},x+\bar{h}]$, then
\begin{align}
\lim_{h\searrow 0}
\frac{\left( \E_x(k(X_{\tau_h}))-k(x)
\right)^2}{\E_x(\tau_h)}
= \left(\frac{k'(x+) - k'(x-)}{2}\right)^2 \sigma^2(x).
\end{align}
In particular, for the local time of $X$ at $x$, denoted by $l^x_t(X)$, it holds that 
\begin{align}
\lim_{h\searrow 0}\frac{\E_x\left(l^x_{\tau_h}(X)\right)^2}{\E_x(\tau_h)}= \sigma^2(x).
\end{align}
\end{prop}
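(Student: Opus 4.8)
The plan is to isolate the possible kink of $k$ at $x$ from its smooth part and to use that $X$, having continuous paths, exits the $h$-ball around $x$ at \emph{exactly} distance $h$. Set $c:=\tfrac12\big(k'(x+)-k'(x-)\big)$ and write, on $[x-\bar h,x+\bar h]$, $k(y)=\tilde k(y)+c\,|y-x|$ with $\tilde k(y):=k(y)-c\,|y-x|$. A short computation of one-sided derivatives at $x$ shows that $\tilde k$ is $\mathcal C^1$ on $[x-\bar h,x+\bar h]$ (both one-sided derivatives at $x$ equal $\tfrac12(k'(x+)+k'(x-))$), that $\tilde k'$ is Lipschitz there, and that $\tilde k$ is $\mathcal C^2$ on each of $[x-\bar h,x]$ and $[x,x+\bar h]$ with bounded second derivative. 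For small $h\le\bar h$ we have $\tau_h<\infty$ $\mathbb{P}_x$-a.s., and path-continuity gives $|X_{\tau_h}-x|=h$ a.s.; since also $k(x)=\tilde k(x)$, this yields
\begin{align*}
\E_x\big(k(X_{\tau_h})\big)-k(x)=\big(\E_x(\tilde k(X_{\tau_h}))-\tilde k(x)\big)+c\,h .
\end{align*}

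The key estimate is that the smooth part is $O(\E_x(\tau_h))$. Since $\tilde k$ is $\mathcal C^1$ with Lipschitz derivative, a standard extension of Itô's formula applies --- no local-time term appears because $\tilde k'$ is continuous, and the a.e.-defined $\tilde k''$ may be assigned an arbitrary value at the single point $x$, which $X$ visits on a Lebesgue-null set of times a.s.; alternatively one mollifies $\tilde k$ and passes to the limit. This gives
\begin{align*}
\tilde k(X_{\tau_h})-\tilde k(x)=\int_0^{\tau_h}\tilde k'(X_s)\sigma(X_s)\,dW_s+\int_0^{\tau_h}\Big(\mu(X_s)\tilde k'(X_s)+\tfrac12\sigma^2(X_s)\tilde k''(X_s)\Big)\,ds .
\end{align*}
Since $\tilde k'$ and $\sigma$ are bounded on $[x-\bar h,x+\bar h]$ and $\E_x(\tau_h)<\infty$ (Lemma \ref{lemma-tau-h-oct}), the stochastic integral is a true martingale with vanishing $\E_x$-expectation, while the drift integrand is bounded by a constant $M$; hence $\big|\E_x(\tilde k(X_{\tau_h}))-\tilde k(x)\big|\le M\,\E_x(\tau_h)$ for all $h\le\bar h$.

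It then remains to identify the rate of $\E_x(\tau_h)$. Applying Dynkin's formula to the $\mathcal C^2$ function $y\mapsto(y-x)^2$ and using $|X_{\tau_h}-x|=h$ gives $h^2=\E_x\big(\int_0^{\tau_h}(2\mu(X_s)(X_s-x)+\sigma^2(X_s))\,ds\big)$; dividing by $\E_x(\tau_h)$ and running the same limit argument as at the end of the proof of Lemma \ref{main-lemma} --- the integrand tends to $\sigma^2(x)$ uniformly on $[0,\tau_h]$ as $h\searrow0$, by continuity of $\mu,\sigma$ --- yields $h^2/\E_x(\tau_h)\to\sigma^2(x)$ (in particular $\E_x(\tau_h)\to0$). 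Combining the three displays,
\begin{align*}
\big(\E_x(k(X_{\tau_h}))-k(x)\big)^2=c^2h^2+O\big(h\,\E_x(\tau_h)\big)+O\big(\E_x(\tau_h)^2\big),
\end{align*}
and dividing by $\E_x(\tau_h)$ gives the claimed limit $c^2\sigma^2(x)=\big(\tfrac12(k'(x+)-k'(x-))\big)^2\sigma^2(x)$. For the local-time statement, Tanaka's formula gives $|X_{\tau_h}-x|=\int_0^{\tau_h}\operatorname{sgn}(X_s-x)\,dX_s+l^x_{\tau_h}(X)$; splitting $dX_s=\mu(X_s)\,ds+\sigma(X_s)\,dW_s$, the $dW$-part has zero $\E_x$-expectation and the $ds$-part is $O(\E_x(\tau_h))$ in modulus, so $\E_x(l^x_{\tau_h}(X))=h+O(\E_x(\tau_h))$ and therefore $\E_x(l^x_{\tau_h}(X))^2/\E_x(\tau_h)=h^2/\E_x(\tau_h)+O(h)+O(\E_x(\tau_h))\to\sigma^2(x)$.

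I expect the main obstacle to be the second step: making the Itô expansion rigorous for the merely $\mathcal C^1$ function $\tilde k$ and extracting the sharp $O(\E_x(\tau_h))$ bound rather than the crude $O(h)$ one. A route avoiding the $\mathcal C^{1,1}$ Itô formula uses that $X_{\tau_h}\in\{x-h,x+h\}$ a.s., so that $\E_x(\tilde k(X_{\tau_h}))-\tilde k(x)=p_h\big(\tilde k(x+h)-\tilde k(x)\big)+(1-p_h)\big(\tilde k(x-h)-\tilde k(x)\big)$ with $p_h:=\mathbb{P}_x(X_{\tau_h}=x+h)=\frac{S(x)-S(x-h)}{S(x+h)-S(x-h)}$ for $S$ the scale function of $X$; the expansion $p_h=\tfrac12+O(h)$ (from $S\in\mathcal C^2$ near $x$) combined with a second-order Taylor expansion of $\tilde k$ at $x$ then yields the same $O(h^2)$ bound.
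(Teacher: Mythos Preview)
Your proof is correct but takes a genuinely different route from the paper's. The paper applies the It\^o--Tanaka formula directly to $k$, so that the local-time term $\tfrac12(k'(x+)-k'(x-))\,l^x_{\tau_h}(X)$ appears explicitly; after taking expectations and squaring, the first limit is reduced to computing $\lim_{h\searrow0}\E_x(l^x_{\tau_h}(X))^2/\E_x(\tau_h)$, and this is identified by specialising the very same formula to $k(y)=|y-x|$, which yields $\E_x(|X_{\tau_h}-x|)^2/\E_x(\tau_h)=h^2/\E_x(\tau_h)\to\sigma^2(x)$ via Lemma~\ref{lemma-tau-h-oct}. Your argument instead isolates the kink a priori by writing $k=\tilde k+c|\cdot-x|$ with $\tilde k\in\mathcal C^{1,1}$, so that the absolute-value part contributes exactly $ch$ (using $|X_{\tau_h}-x|=h$) and the smooth part is $O(\E_x(\tau_h))$; the local time never enters the proof of the first limit, and you derive the second limit separately from Tanaka's formula. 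Both arguments ultimately hinge on the same scaling $h^2/\E_x(\tau_h)\to\sigma^2(x)$. The paper's approach is more unified --- the two statements are visibly one computation --- while yours is more modular: the first statement is obtained without any local-time machinery, and your scale-function alternative for bounding the smooth remainder sidesteps the $\mathcal C^{1,1}$ It\^o formula entirely, which some readers may find cleaner.
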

\begin{proof}
Use the It\^{o}-Tanaka formula, see e.g. \cite{peskir2005change} or \cite[p. 75]{peskir2006optimal}, to obtain, for $0<h\leq\bar{h}$,
\begin{align}
k(X_{\tau_h}) - k(x) 
 &= 
\int_0^{\tau_h}A_Xk(X_t)I_{\{X_t \neq x\}}dt   +\int_0^{\tau_h}k'(X_t)\sigma(X_t)I_{\{X_t \neq x\}}dW_t\\
& \quad + \frac{1}{2}\int_0^{\tau_h}\left(k'(X_t+) - k'(X_t-)\right)I_{\{X_t = x\}}dl^x_t(X)\\
& = 
\int_0^{\tau_h}A_Xk(X_t)I_{\{X_t \neq x\}}dt +\int_0^{\tau_h}k'(X_t)\sigma(X_t)I_{\{X_t \neq x\}}dW_t\\
& \quad  + \frac{1}{2}\left(k'(x+) - k'(x-)\right) l^x_{\tau_h}(X).
\end{align}
Thus,
\begin{align}
& \lim_{h\searrow 0}\frac{\left( \E_x(k(X_{\tau_h}))-k(x)\right)^2}{\E_x(\tau_h)}\\
&\enskip=\lim_{h\searrow 0}\frac{\left(\E_x\left(
\int_0^{\tau_h}A_Xk(X_t)I_{\{X_t \neq x\}}dt\right) + 
\frac{1}{2}\left(k'(x+) - k'(x-)\right) \E_x\left(l^x_{\tau_h}(X)
\right)\right)^2}{\E_x(\tau_h)}. \label{newprop-okt1}
\end{align}
Observe that
$\lim_{h\searrow 0}\E_x\left(l^x_{\tau_h}(X)\right) = 0$,  
$\lim_{h\searrow 0}\E_x\left(\int_0^{\tau_h}A_Xk(X_t)I_{\{X_t \neq x\}}dt\right)=0$ and
$\lim_{h\searrow 0}\frac{\E_x\left(\int_0^{\tau_h}A_Xk(X_t)I_{\{X_t \neq x\}}dt\right)}{\E_x(\tau_h)}$ is finite. 
Thus, expansion of the square in \eqref{newprop-okt1} gives, 
\begin{align}
\lim_{h\searrow 0}\frac{\left( \E_x(k(X_{\tau_h}))-k(x)\right)^2}{\E_x(\tau_h)}= \left(\frac{\left(k'(x+) - k'(x-)\right)}{2}\right)^2\lim_{h\searrow 0}\frac{\E_x\left(l^x_{\tau_h}(X)\right)^2}{\E_x(\tau_h)}.\label{newprop1}
\end{align}
Applying the result in \eqref{newprop1} for $k(y):=|y-x|$ (recall that $x$ is fixed) gives us
\begin{align}
\lim_{h\searrow 0}\frac{\left( \E_x(|X_{\tau_h}-x|)\right)^2}{\E_x(\tau_h)}
& = \left(\frac{\left(1 - (-1)\right)}{2}\right)^2\lim_{h\searrow 0}\frac{\E_x\left(l^x_{\tau_h}(X)\right)^2}{\E_x(\tau_h)}\\
& = \lim_{h\searrow 0}\frac{\E_x\left(l^x_{\tau_h}(X)\right)^2}{\E_x(\tau_h)}. \label{newprop2}
\end{align}
However, it is also easy to see that,
\begin{align}
\lim_{h\searrow 0}\frac{\left( \E_x(|X_{\tau_h}-x|)\right)^2}{\E_x(\tau_h)}
& = \lim_{h\searrow 0}\frac{\left(p_h|x+h-x| + (1-p_h)|x-h-x|\right)^2}{\E_x(\tau_h)}\\
& = \lim_{h\searrow 0}\frac{h^2}{\E_x(\tau_h)},\label{newprop3}
\end{align}
where $p_h:= \mathbb{P}_x(X_{\tau_h}=x+h)$. The result follows from
\eqref{newprop1},
\eqref{newprop2}, 
\eqref{newprop3} and the following limit which is proved in Lemma \ref{lemma-tau-h-oct}, 
\begin{align}
\lim_{h\searrow 0} \frac{h^2}{\E_x(\tau_h)} = \sigma^2(x)\label{newprop4}.
\end{align}
\end{proof}

\begin{rem}  The limit \eqref{newprop4} was also recently proved in \cite{geiss2017first}. 
 In Lemma \ref{lemma-tau-h-oct} we prove \eqref{newprop4} using It\^{o}'s formula and the optional sampling theorem. In \cite{geiss2017first} the proof of \eqref{newprop4} relies on a representation of the denominator based on the scale function and the speed measure of the diffusion $X$ and standard limit arguments, e.g. l'Hospital's rule. 
\end{rem}

Theorem \ref{smoothfit-thm} below presents a smooth fit condition that an equilibrium value function must satisfy at any $x \in \partial C$, under additional assumptions. We use this result when making an ansatz to finding an equilibrium stopping time in Section \ref{applications:mean-var}.

\begin{thm} \label{smoothfit-thm}  Suppose that $\tau^{\lambda,C}$ is an equilibrium stopping time. For a fixed $x\in \partial C$, if the functions 
$\phi_{ {\lambda,C}}$ and $\psi_{ {\lambda,C}}$ are $\mathcal{C}^2$ on
$[x-\bar{h},x]$ and $[x,x+\bar{h}]$ for some constant $\bar{h}>0$, then the equilibrium value function $J_{\lambda,C}$ satisfies smooth fit in the sense that
\begin{align}
J_{\lambda,C}'(x) = f'(x) + g'(h(x))h'(x). 
\end{align} 
\end{thm}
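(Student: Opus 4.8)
The plan is to combine the equilibrium condition \eqref{mainthmcond5} at the boundary point $x \in \partial C$ with the second-order expansion provided by Proposition \ref{newprop}, applied to the functions $\phi_{\lambda,C}$ and $\psi_{\lambda,C}$, which are assumed to be $\mathcal C^2$ on the two half-intervals $[x-\bar h, x]$ and $[x, x+\bar h]$. First I would recall that since $\tau^{\lambda,C}$ is an equilibrium, condition \eqref{mainthmcond5} gives $\liminf_{h\searrow 0} \frac{-a(x,h)}{\E_x(\tau_h)} \geq 0$, where $a(x,h)$ is as in \eqref{newlabelKR}. The idea is to Taylor-expand $a(x,h)$ to the appropriate order in $\E_x(\tau_h)$: the first-order (drift/characteristic-operator) terms are order $\E_x(\tau_h)$, but the terms that can make $a(x,h)$ change sign — and hence are relevant at the $\liminf$ level once divided by $\E_x(\tau_h)$ — involve the jump in the first derivatives of $\phi_{\lambda,C}$ and $\psi_{\lambda,C}$ across $x$, which by Proposition \ref{newprop} contribute a term of exact order $\E_x(\tau_h)$ coming from the local time.

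Concretely, I would write $\E_x(\phi_{\lambda,C}(X_{\tau_h})) - \phi_{\lambda,C}(x)$ and $\E_x(\psi_{\lambda,C}(X_{\tau_h})) - \psi_{\lambda,C}(x)$ using the It\^o--Tanaka decomposition from the proof of Proposition \ref{newprop}, isolating the local-time term $\tfrac12(\phi_{\lambda,C}'(x+)-\phi_{\lambda,C}'(x-))\,\E_x(l^x_{\tau_h}(X))$ and similarly for $\psi_{\lambda,C}$. Since $g \in \mathcal C^3$, a Taylor expansion of $g(\E_x(\psi_{\lambda,C}(X_{\tau_h}))) - g(\psi_{\lambda,C}(x))$ around $\psi_{\lambda,C}(x)$ yields $g'(\psi_{\lambda,C}(x))\big(\E_x(\psi_{\lambda,C}(X_{\tau_h})) - \psi_{\lambda,C}(x)\big) + O\big((\E_x(\psi_{\lambda,C}(X_{\tau_h}))-\psi_{\lambda,C}(x))^2\big)$, and the squared term is $o(\E_x(\tau_h))$ at the $\liminf$ scale only if one is careful — actually by Proposition \ref{newprop} it is of order $\E_x(\tau_h)$, so it must be tracked, but it is a perfect square times a nonnegative constant, hence contributes with a definite sign. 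Collecting terms, $a(x,h)/\E_x(\tau_h)$ has a leading contribution governed by $\big(\phi_{\lambda,C}'(x+) - \phi_{\lambda,C}'(x-)\big) + g'(\psi_{\lambda,C}(x))\big(\psi_{\lambda,C}'(x+) - \psi_{\lambda,C}'(x-)\big)$ times (a multiple of) $\E_x(l^x_{\tau_h}(X))/\E_x(\tau_h)$, which blows up unless this bracket vanishes — and the sign structure forced by $\liminf(-a/\E_x(\tau_h)) \geq 0$ together with the fact that $\E_x(l^x_{\tau_h}(X))/\E_x(\tau_h) \to \infty$ forces the jump bracket to be exactly zero. Noting that $J_{\lambda,C} = \phi_{\lambda,C} + g(\psi_{\lambda,C})$ and using the chain rule on each half-interval, the derivative jump of $J_{\lambda,C}$ at $x$ equals exactly $\big(\phi_{\lambda,C}'(x+) - \phi_{\lambda,C}'(x-)\big) + g'(\psi_{\lambda,C}(x))\big(\psi_{\lambda,C}'(x+) - \psi_{\lambda,C}'(x-)\big) = 0$, so $J_{\lambda,C}'$ is continuous at $x$. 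Finally, since $x \in \partial C$ lies at the edge of the continuation region, on the $C^c$ side the equilibrium value is $f + g(h)$ (immediate stopping holds there, as follows from the characterization), so the one-sided derivative from that side is $f'(x) + g'(h(x))h'(x)$; by the just-established continuity of $J_{\lambda,C}'$ this is also the two-sided derivative, giving the claimed smooth fit.

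The main obstacle I anticipate is the bookkeeping needed to justify that the jump bracket must vanish: one has to argue carefully that $\E_x(l^x_{\tau_h}(X))/\E_x(\tau_h) \to +\infty$ (which follows from Proposition \ref{newprop}, since $\E_x(l^x_{\tau_h}(X))^2/\E_x(\tau_h) \to \sigma^2(x) > 0$ while $\E_x(l^x_{\tau_h}(X)) \to 0$, forcing $\E_x(l^x_{\tau_h}(X))/\E_x(\tau_h) \to \infty$), and then that the remaining terms in $a(x,h)$ — the characteristic-operator integrals and the $g$-Taylor remainder — are $o(\E_x(l^x_{\tau_h}(X)))$, so that a nonzero bracket would make $-a(x,h)/\E_x(\tau_h)$ tend to $-\infty$ or $+\infty$ depending on its sign, and only the sign giving $-\infty$ is excluded by \eqref{mainthmcond5}; to get a genuine contradiction (and hence a vanishing bracket) one needs that a bracket of \emph{either} sign is incompatible, which is where I would instead invoke the relevant boundary lemma from the appendix (presumably Lemma \ref{smooth-fit-lemma}) that already packages the precise expansion of $a(x,h)$, so that this theorem becomes a short deduction from that lemma plus Proposition \ref{newprop}. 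I would structure the write-up to lean on Lemma \ref{smooth-fit-lemma} for the expansion and keep the argument here to the clean algebraic conclusion.
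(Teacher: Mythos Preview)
Your local-time/It\^o--Tanaka expansion and the use of Proposition \ref{newprop} to show $\E_x(l^x_{\tau_h}(X))/\E_x(\tau_h)\to\infty$ are exactly what the paper does. But there is a genuine gap that you yourself flag and do not close: condition \eqref{mainthmcond5} is a one-sided inequality, so the blow-up argument only rules out a \emph{positive} value of the bracket
\[
\big(\phi_{\lambda,C}'(x+)-\phi_{\lambda,C}'(x-)\big)+g'(\psi_{\lambda,C}(x))\big(\psi_{\lambda,C}'(x+)-\psi_{\lambda,C}'(x-)\big),
\]
not a negative one. Invoking Lemma \ref{smooth-fit-lemma} does not help here --- that lemma merely repackages \eqref{mainthmcond5} as the $a(x,h)$ expression you are already using; it contains no additional sign information. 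Your final fallback (``on the $C^c$ side $J_{\lambda,C}=f+g(h)$, so one one-sided derivative is known'') also does not close the gap: a boundary point of an open set on the line need not have a full $C^c$-interval on either side, and in any case this would give you only one one-sided derivative, not the vanishing of the jump.

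The missing ingredient is condition \eqref{mainthmcond1}. The paper's proof begins by observing that $J_{\lambda,C}(y)\geq f(y)+g(h(y))$ for \emph{all} $y$ near $x$ (equality on $C^c$, inequality on $C$ by \eqref{mainthmcond1}) with equality at $x$ itself, and from this elementary difference-quotient comparison one gets
\[
J_{\lambda,C}'^{(-)}(x)\ \leq\ f'(x)+g'(h(x))h'(x)\ \leq\ J_{\lambda,C}'^{(+)}(x).
\]
This forces any jump in $J_{\lambda,C}'$ at $x$ to be \emph{nonnegative}, i.e.\ the bracket above is $\geq 0$. Now your blow-up via \eqref{mainthmcond5} rules out strict positivity, hence the bracket vanishes, and the sandwich inequality yields the claimed smooth fit value. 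You should insert this step before the It\^o--Tanaka analysis; the rest of your plan then goes through.
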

\begin{proof}
Consider a fixed $x\in \partial C$. For any $\epsilon$, satisfying $x+\epsilon \in E$ (both negative and positive such $\epsilon$ exist since $E$ is open and $\partial C$ is the boundary of $C$ in $E$), it holds that
\begin{align}
J_{\lambda,C}(x +\epsilon ) \geq f(x+\epsilon) + g(h(x+\epsilon)).
\end{align}
To see this use that this inequality is an equality when $x+\epsilon \notin C$, and condition \eqref{mainthmcond1} for the case $x+\epsilon \in C$. Moreover, since $x\in \partial C$ it follows that $J_{\lambda,C}(x) = f(x) + g(h(x))$. Hence,
\begin{align}
J_{\lambda,C}(x +\epsilon ) - J_{\lambda,C}(x) \geq f(x+\epsilon) - f(x)+ g(h(x+\epsilon))- g(h(x)).
\end{align}
If $\epsilon<0$ it follows that
\begin{align}
\frac{J_{\lambda,C}(x+\epsilon)- J_{\lambda,C}(x)}{\epsilon}
\leq \frac{f(x+\epsilon) - f(x) }{\epsilon}  + \frac{g(h(x+\epsilon))- g(h(x))}{\epsilon}. \label{smoothfit-thm:eq1}
\end{align}
Hence, the left derivative satisfies
$
J'^{(-)}_{\lambda,C}(x) \leq f'(x) + g'(h(x))h'(x).
$
The right derivative can be similarly dealt with and we thus obtain
\begin{align} 
 J'^{(-)}_{\lambda,C}(x) \leq f'(x) + g'(h(x))h'(x) \leq  J'^{(+)}_{\lambda,C}(x). \label{smoothfit-thm:eq1.5}
\end{align}
Let us now prove that if we would not have smooth fit then condition \eqref{mainthmcond5} would be violated and hence smooth fit must hold, by Theorem \ref{main-thm-imm-stp}. Note that if smooth fit would not hold then
$
J_{\lambda,C}'^{(+)}(x)-J_{\lambda,C}'^{(-)}(x)>0,
$
cf. \eqref{smoothfit-thm:eq1.5}, which is equivalent to
\begin{align}
\phi_{\lambda,C}'(x+) + g'(\psi_{\lambda,C}(x))\psi'_{\lambda,C}(x+) 
>\phi_{\lambda,C}'(x-) + g'(\psi_{\lambda,C}(x))\psi_{\lambda,C}'(x-).
\end{align}
To see this use that $J_{\lambda,C}(x)=\phi_{\lambda,C}(x)+g(\psi_{\lambda,C}(x))$ and the chain rule, and then the differentiability assumptions (i.e. $\phi_{\lambda,C}$ and $\psi_{\lambda,C}$ are $\mathcal{C}^2$ on $[x-\bar{h},x]$ and $[x,x+\bar{h}]$) and continuity (of $\phi_{\lambda,C}$ and $\psi_{\lambda,C}$, cf. admissibility, Definition \ref{def:mix_strat}). Rewrite the equation above as
\begin{align}
\phi_{\lambda,C}'(x+)-\phi_{\lambda,C}'(x-) + g'(\psi_{\lambda,C}(x))(\psi'_{\lambda,C}(x+) -\psi'_{\lambda,C}(x-))>0.
\label{contradic-assum}
\end{align}
The differentiability assumptions imply that we can use the It\^{o}-Tanaka formula to obtain, for $0<h<\bar{h}$,
\begin{align}
& \phi_{\lambda,C}(X_{\tau_h}) - \phi_{\lambda,C}(x)\\ 
& = 
\int_0^{\tau_h}A_X\phi_{\lambda,C}(X_t)I_{\{X_t \neq x\}}dt +\int_0^{\tau_h}\phi'_{\lambda,C}(X_t)\sigma(X_t)I_{\{X_t \neq x\}}dW_t\\
& \quad  + \frac{1}{2}\left(\phi'_{\lambda,C}(x+) - \phi'_{\lambda,C}(x-)\right) l^x_{\tau_h}(X).\label{ito-tanaka-formula}
\end{align}
Thus, 
\begin{align}
\E_x\left(\phi_{\lambda,C}(X_{\tau_h})\right) - \phi_{\lambda,C}(x) = \enskip a_1(h) + a_2 \E_x\left(l^x_{\tau_h}(X)\right),
\end{align}
for
$a_1(h):=\E_x\left(\int_0^{\tau_h}A_X\phi_{\lambda,C}(X_t)I_{\{X_t \neq x\}}dt\right)$ and
$a_2:=\frac{1}{2}(\phi'_{\lambda,C}(x+) - \phi'_{\lambda,C}(x-))$.
Similarly,
\begin{align}
\E_x\left(\psi_{\lambda,C}(X_{\tau_h})\right) - \psi_{\lambda,C}(x) = \enskip b_1(h) + b_2\E_x\left(l^x_{\tau_h}(X)\right), \label{addedapr18}
\end{align}
for 
$b_1(h):=\E_x\left(\int_0^{\tau_h}A_X\psi_{\lambda,C}(X_t)I_{\{X_t \neq x\}}dt\right)$
and $b_2:=\frac{1}{2}(\psi'_{\lambda,C}(x+) - \psi'_{\lambda,C}(x-))$. 
Hence, using standard Taylor expansion of the function $g$ we write $a(x,h)$ in \eqref{newlabelKR} as, 
\begin{align}
a(x,h)  
&= a_1(h)  + a_2\E_x\left(l^x_{\tau_h}(X)\right)  + g'(\psi_{\lambda,C}(x))\{b_1(h) + b_2\E_x\left(l^x_{\tau_h}(X)\right)\}\\
&\enskip + \frac{1}{2}g''( \psi_{\lambda,C}(x))\{b_1(h) + b_2\E_x\left(l^x_{\tau_h}(X)\right)\}^2\\
&\enskip + \frac{1}{6}g'''(c_h)\{b_1(h) + b_2\E_x\left(l^x_{\tau_h}(X)\right)\}^3\added[id=kri,remark={}]{,}\deleted[id=kri,remark={}]{.}\label{newsmoothmar0}
\end{align}
where $c_h$ is a constant between 
$\psi_{\lambda,C}(x)$ and  
$\psi_{\lambda,C}(x)+b_1(h) + b_2\E_x\left(l^x_{\tau_h}(X)\right)$.  This can be written as,
\begin{align}
&\frac{-a(x,h)}{\E_x(\tau_h)}\\
&\enskip\quad =-\frac{a_1(h)}{\E_x(\tau_h)} - g'( \psi_{\lambda,C}(x))\frac{ b_1(h)}{\E_x(\tau_h)} \label{newsmoothmar1}\\
&\quad\quad\quad- (a_2+g'(\psi_{\lambda,C}(x))b_2)\frac{\E_x\left(l^x_{\tau_h}(X)\right)}{\E_x(\tau_h)}\label{newsmoothmar2}\\ 
&\quad\quad\quad- \frac{1}{2}g''( \psi_{\lambda,C}(x))
\frac{\left(\E_x\left(\psi_{\lambda,C}(X_{\tau_h})\right) - \psi_{\lambda,C}(x)\right)^2}{\E_x(\tau_h)} 
\label{newsmoothmar3}  \\
&\quad\quad\quad - \frac{1}{6}g'''(c_h)
\frac{\left(\E_x\left(\psi_{\lambda,C}(X_{\tau_h})\right) - \psi_{\lambda,C}(x)\right)^3}{\E_x(\tau_h)}.\label{newsmoothmar4} 
\end{align}
Let us see what happens to the liminf of $\frac{-a(x,h)}{\E_x(\tau_h)}$ when sending $h\searrow 0$: The liminf of the terms in \eqref{newsmoothmar1} are finite due to the differentiability assumptions for $\phi_{\lambda,C}$ and $\psi_{\lambda,C}$. The term in \eqref{newsmoothmar2} can be written as
\begin{align}
&- (a_2+g'(\psi_{\lambda,C}(x))b_2)\frac{\E_x\left(l^x_{\tau_h}(X)\right)}{\E_x(\tau_h)}\\
&\enskip = -\frac{1}{2}\left(
  \phi'_{\lambda,C}(x+) - \phi'_{\lambda,C}(x-)  +
g'(\psi_{\lambda,C}(x))(\psi'_{\lambda,C}(x+) - \psi'_{\lambda,C}(x-))
\right) \\   
&\quad \quad\quad\times\frac{\E_x\left(l^x_{\tau_h}(X)\right)}{\E_x(\tau_h)}.
\end{align}
From Proposition \ref{newprop} we know that $\lim_{h\searrow 0}\frac{\E_x\left(l^x_{\tau_h}(X)\right)^2}{\E_x(\tau_h)} = \sigma^2(x)$, where the limits of the numerator and the denominator are both zero, and $\sigma(x)>0$ by assumption.
Hence,
\begin{align}
\lim_{h\searrow 0}\frac{\E_x\left(l^x_{\tau_h}(X)\right)}{\E_x(\tau_h)} 
&= \lim_{h\searrow 0}\frac{\E_x\left(l^x_{\tau_h}(X)\right)^2}{\E_x(\tau_h)}\frac{1}{\E_x\left(l^x_{\tau_h}(X)\right)}\\
&=\sigma^2(x)\lim_{h\searrow 0}\frac{1}{\E_x\left(l^x_{\tau_h}(X)\right)}\\
&=\infty.
\end{align}
Thus, from the contradiction assumption \eqref{contradic-assum} follows that the liminf of the term in \eqref{newsmoothmar2} is equal to $-\infty$.
Proposition \ref{newprop} gives an explicit expression for the liminf of the term in \eqref{newsmoothmar3}, which in particular implies that this limit is finite.  
The liminf of the term in \eqref{newsmoothmar4} vanishes, to see this use that the limit of the ratio in \eqref{newsmoothmar3} is finite and $\E_x\left(\psi_{\lambda,C}(X_{\tau_h})\right) - \psi_{\lambda,C}(x)\rightarrow 0$ (cf. continuity of $\psi_{\lambda,C}$). This implies that condition \eqref{mainthmcond5}  would indeed be violated if \eqref{contradic-assum} were true and smooth fit must therefore hold.
\end{proof}

Theorem \ref{main-thm-imm-stp} presents necessary and sufficient conditions for a stopping time $\tau^{\lambda,C}$ to be an equilibrium stopping time. If we for an equilibrium stopping time candidate $\tau^{\lambda,C}$ can find explicit expressions for the functions $\phi_{\lambda,C}$ and $\psi_{\lambda,C}$ then it is easy to verify if conditions \eqref{mainthmcond1}--\eqref{mainthmcond4} hold whereas condition \eqref{mainthmcond5} is not necessarily easy to verify. Theorem \ref{sufficient-cond-thm} below presents a more easily verified characterization of condition \eqref{mainthmcond5}, given additional differentiability conditions. We will use Theorem \ref{sufficient-cond-thm} to verify an ansatz to finding an equilibrium in Section \ref{applications:mean-var}.  

\begin{thm} \label{sufficient-cond-thm}  Consider a stopping time $\tau^{\lambda,C} \in \mathcal{N}$. If for any fixed $x\in \partial C$ there exists a constant $\bar{h}>0$ such that the functions $\phi_{\lambda,C}$ and $\psi_{\lambda,C}$ are $\mathcal{C}^2$ on $[x-\bar{h},x]$ and $[x,x+\bar{h}]$ and such that the function 
$\phi_{\lambda,C}(\cdot) + g'(\psi_{\lambda,C}(x)) \psi_{\lambda,C}(\cdot)$ is $\mathcal{C}^1$  on $[x-\bar{h},x+\bar{h}]$ then condition \eqref{mainthmcond5} is equivalent to,
\begin{align} 
&  A_X\phi_{\lambda,C}(x+) + g'( \psi_{\lambda,C}(x))A_X\psi_{\lambda,C}(x+) 
+  A_X\phi_{\lambda,C}(x-) + g'( \psi_{\lambda,C}(x))A_X\psi_{\lambda,C}(x-)\\ 
& \enskip\enskip + g''( \psi_{\lambda,C}(x))\left(\frac{\psi'_{\lambda,C}(x+) - \psi'_{\lambda,C}(x-)}{2}\right)^2 
\sigma^2(x)\leq 0.\label{sufficient-cond-thm:1} 
\end{align} 
\end{thm}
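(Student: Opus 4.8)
The plan is to establish something slightly stronger than the stated equivalence: at the fixed $x\in\partial C$ and under the given differentiability hypotheses, the $\liminf$ in \eqref{mainthmcond5} is in fact an ordinary limit, equal to minus one half of the left-hand side of \eqref{sufficient-cond-thm:1}, after which the equivalence is immediate. I would begin from the expansion of $a(x,h)$ already derived in the proof of Theorem \ref{smoothfit-thm}: setting $\delta_h:=\E_x(\psi_{\lambda,C}(X_{\tau_h}))-\psi_{\lambda,C}(x)$ and Taylor-expanding $g$ about $\psi_{\lambda,C}(x)$ (legitimate since $g\in\mathcal{C}^3$), one has
\[
a(x,h)=\bigl(\E_x(\phi_{\lambda,C}(X_{\tau_h}))-\phi_{\lambda,C}(x)\bigr)+g'(\psi_{\lambda,C}(x))\,\delta_h+\tfrac12 g''(\psi_{\lambda,C}(x))\,\delta_h^2+\tfrac16 g'''(c_h)\,\delta_h^3,
\]
with $c_h$ between $\psi_{\lambda,C}(x)$ and $\psi_{\lambda,C}(x)+\delta_h$. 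Introducing $K:=\phi_{\lambda,C}(\cdot)+g'(\psi_{\lambda,C}(x))\,\psi_{\lambda,C}(\cdot)$ --- which is exactly the function assumed $\mathcal{C}^1$ on $[x-\bar{h},x+\bar{h}]$ and which is $\mathcal{C}^2$ on $[x-\bar{h},x]$ and on $[x,x+\bar{h}]$ --- this rewrites as $a(x,h)=\bigl(\E_x(K(X_{\tau_h}))-K(x)\bigr)+\tfrac12 g''(\psi_{\lambda,C}(x))\,\delta_h^2+\tfrac16 g'''(c_h)\,\delta_h^3$.

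Next I would divide $-a(x,h)$ by $\E_x(\tau_h)$ and let $h\searrow 0$, treating the three summands separately. For the $\delta_h^2$-term, Proposition \ref{newprop} with $k=\psi_{\lambda,C}$ gives $\delta_h^2/\E_x(\tau_h)\to\bigl((\psi'_{\lambda,C}(x+)-\psi'_{\lambda,C}(x-))/2\bigr)^2\sigma^2(x)$, and the $\delta_h^3$-term then vanishes because $\delta_h^3/\E_x(\tau_h)=(\delta_h^2/\E_x(\tau_h))\,\delta_h\to0$ while $g'''(c_h)$ stays bounded ($c_h\to\psi_{\lambda,C}(x)$, $g'''$ continuous). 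For the first summand the $\mathcal{C}^1$-hypothesis is decisive: applying the It\^{o}-Tanaka formula (as in Proposition \ref{newprop}) to $K$ produces \emph{no} local-time contribution, so $\E_x(K(X_{\tau_h}))-K(x)=\E_x\bigl(\int_0^{\tau_h}A_XK(X_t)I_{\{X_t\neq x\}}\,dt\bigr)$, where $A_XK=A_X\phi_{\lambda,C}+g'(\psi_{\lambda,C}(x))A_X\psi_{\lambda,C}$ is continuous on each side of $x$ with one-sided limits $A_XK(x\pm)$. Everything then comes down to the occupation-time identity
\[
\lim_{h\searrow 0}\frac{\E_x\bigl(\int_0^{\tau_h}\rho(X_t)\,dt\bigr)}{\E_x(\tau_h)}=\tfrac12\bigl(\rho(x+)+\rho(x-)\bigr),
\]
valid for any $\rho$ bounded near $x$ with one-sided limits there; taking $\rho=A_XK$ gives $\bigl(\E_x(K(X_{\tau_h}))-K(x)\bigr)/\E_x(\tau_h)\to\tfrac12\bigl(A_XK(x+)+A_XK(x-)\bigr)$.

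The occupation-time identity is the only genuinely new ingredient and the step I expect to be the main obstacle. Writing $\rho$ near $x$ as the piecewise-constant function equal to $\rho(x\pm)$ on the two sides of $x$ plus a remainder $r$ with $r(y)\to0$ as $y\to x$ from either side, the remainder contributes $o(\E_x(\tau_h))$ by a crude supremum bound; and since the time spent at the single point $x$ has zero expectation, the expected occupation times of $\{X_t>x\}$ and $\{X_t<x\}$ up to $\tau_h$ sum to $\E_x(\tau_h)$, so it suffices to show $\E_x\bigl(\int_0^{\tau_h}I_{\{X_t>x\}}dt\bigr)/\E_x(\tau_h)\to\tfrac12$. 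For this I would combine two facts. First, $p_h:=\mathbb{P}_x(X_{\tau_h}=x+h)\to\tfrac12$: indeed $h(2p_h-1)=\E_x(X_{\tau_h}-x)=\E_x\bigl(\int_0^{\tau_h}\mu(X_t)dt\bigr)$, whence $|2p_h-1|\le\sup_{[x-h,x+h]}|\mu|\cdot\E_x(\tau_h)/h=O(h)$ using $\E_x(\tau_h)=O(h^2)$ (Lemma \ref{lemma-tau-h-oct}). Second, the It\^{o}-Tanaka formula applied to the $\mathcal{C}^1$ function $y\mapsto((y-x)^+)^2$ (again with vanishing local-time term) gives $h^2p_h=\E_x\bigl(\int_0^{\tau_h}[2\mu(X_t)(X_t-x)^++\sigma^2(X_t)I_{\{X_t>x\}}]\,dt\bigr)$; dividing by $\E_x(\tau_h)$, the drift part is $O(h)\to0$, continuity of $\sigma$ lets one replace $\sigma^2(X_t)$ by $\sigma^2(x)$ up to $o(\E_x(\tau_h))$, and $h^2/\E_x(\tau_h)\to\sigma^2(x)$ (Lemma \ref{lemma-tau-h-oct}), so $\sigma^2(x)\lim_h\E_x\bigl(\int_0^{\tau_h}I_{\{X_t>x\}}dt\bigr)/\E_x(\tau_h)=\tfrac12\sigma^2(x)$. (If an appendix lemma such as Lemma \ref{smooth-fit-lemma} already records this occupation-time limit, one can simply cite it.)

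Collecting the three contributions, $\liminf_{h\searrow0}\frac{-a(x,h)}{\E_x(\tau_h)}$ exists as a genuine limit and equals
\[
-\tfrac12\Bigl[A_X\phi_{\lambda,C}(x+)+g'(\psi_{\lambda,C}(x))A_X\psi_{\lambda,C}(x+)+A_X\phi_{\lambda,C}(x-)+g'(\psi_{\lambda,C}(x))A_X\psi_{\lambda,C}(x-)+g''(\psi_{\lambda,C}(x))\bigl(\tfrac{\psi'_{\lambda,C}(x+)-\psi'_{\lambda,C}(x-)}{2}\bigr)^2\sigma^2(x)\Bigr].
\]
Hence \eqref{mainthmcond5} holds at this $x$ if and only if the bracketed quantity is $\le0$, which is precisely \eqref{sufficient-cond-thm:1}. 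Apart from the occupation-time identity, the argument is routine bookkeeping on top of the expansion from the proof of Theorem \ref{smoothfit-thm} and Proposition \ref{newprop}.
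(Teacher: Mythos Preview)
Your argument is correct and follows essentially the same route as the paper: Taylor-expand $g$, use the $\mathcal{C}^1$ hypothesis to kill the local-time contribution, invoke Proposition \ref{newprop} for the $\delta_h^2$-term, and show the cubic term vanishes. The only cosmetic difference is that you package $\phi_{\lambda,C}+g'(\psi_{\lambda,C}(x))\psi_{\lambda,C}$ into a single function $K$ and apply It\^{o}--Tanaka once, whereas the paper applies It\^{o}--Tanaka to $\phi_{\lambda,C}$ and $\psi_{\lambda,C}$ separately (reusing the display \eqref{newsmoothmar1}--\eqref{newsmoothmar4}) and then observes that the $\mathcal{C}^1$ assumption forces $a_2+g'(\psi_{\lambda,C}(x))b_2=0$; this is the same computation.

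Where you actually go further than the paper is the occupation-time identity
\[
\lim_{h\searrow 0}\frac{\E_x\bigl(\int_0^{\tau_h}\rho(X_t)\,dt\bigr)}{\E_x(\tau_h)}=\tfrac12\bigl(\rho(x+)+\rho(x-)\bigr).
\]
The paper simply asserts this step with the phrase ``the differentiability assumptions and basic properties of diffusions imply that\ldots'', whereas you supply a genuine argument via $p_h\to\tfrac12$ and It\^{o}--Tanaka applied to $((y-x)^+)^2$. Your derivation is sound (and, incidentally, Lemma \ref{smooth-fit-lemma} does \emph{not} record this identity, so you cannot just cite it). In short: same approach, with your version filling in a step the paper leaves implicit.
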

\begin{proof}  
Consider an arbitrary $x \in \partial C$. Use the It\^{o}-Tanaka formula to arrive at the same expression as in 
\eqref{newsmoothmar1}-- \eqref{newsmoothmar4}. Note that the $\mathcal{C}^1$ assumption in the statement of the theorem directly implies that $a_2 + g'(\psi_{\lambda,C}(x))b_2 =0$. This implies, using \eqref{newsmoothmar1}--\eqref{newsmoothmar4}, that the expression that we take the limit of in \eqref{mainthmcond5} can be written as %
\begin{align}
&\frac{-a(x,h)}{\E_x(\tau_h)}\\
& =-\frac{
\E_x\left(\int_0^{\tau_h}\left(
A_X\phi_{\lambda,C}(X_t)I_{\{X_t \neq x\}}
+ g'( \psi_{\lambda,C}(x))A_X\psi_{\lambda,C}(X_t)I_{\{X_t \neq x\}}
\right)dt
\right) 
}{\E_x(\tau_h)}  \label{newsuff1}\\
&\quad- \frac{1}{2}g''( \psi_{\lambda,C}(x))\frac{\left(
\E_x\left(\psi_{\lambda,C}(X_{\tau_h})\right) - \psi_{\lambda,C}(x) 
\right)^2}{\E_x(\tau_h)} 
-... \label{newsuff2}
\end{align}
where the last term, which has been notationally suppressed, converges to zero as $h\searrow 0$ (cf. the end the proof of Theorem \ref{smoothfit-thm}). The differentiability assumptions and basic properties of diffusions imply that 
\begin{align} 
& \lim_{h\searrow 0}\left( 
\frac{\E_x\left(\int_0^{\tau_h}\left(A_X\phi_{\lambda,C}(X_t)I_{\{X_t \neq x\}}+ g'( \psi_{\lambda,C}(x))A_X\psi_{\lambda,C}(X_t)I_{\{X_t \neq x\}}\right)dt\right)}{\E_x(\tau_h)}\right)\\
&\enskip = \frac{1}{2}\left(A_X\phi_{\lambda,C}(x+) + g'( \psi_{\lambda,C}(x))A_X\psi_{\lambda,C}(x+)\right)\\
&\quad + \frac{1}{2}\left(A_X\phi_{\lambda,C}(x-) + g'( \psi_{\lambda,C}(x))A_X\psi_{\lambda,C}(x-)\right).
\end{align}
Now use Proposition \ref{newprop} to obtain the result.
\end{proof}

Theorem \ref{nec-cond-THM-for-C} below presents a necessary condition for equilibria for $x \in C$ in the case that the equilibrium intensity function is strictly positive, under additional assumptions. This result will be used when we make an ansatz to finding an equilibrium stopping time in Section \ref{applications:var}.

\begin{thm} \label{nec-cond-THM-for-C} 
Suppose that $\tau^{\lambda,C}$ is an equilibrium stopping time with $\lambda(x)>0$ for $x\in C$ and that $\psi_{\lambda,C}$ is $\mathcal{C}^2$ on $C$. Then $\psi_{\lambda,C}$ satisfies the (non-linear) ODE 
\begin{align}
& -\left(\mu(x)\psi'_{\lambda,C}(x) + \frac{1}{2}\sigma^2(x)\psi''_{\lambda,C}(x)\right)(h(x)-\psi_{\lambda,C}(x))g''(\psi_{\lambda,C}(x))\\
& = \mu(x)\{f'(x) +  h'(x)g'(\psi_{\lambda,C}(x))\} +  \frac{1}{2}\sigma^2(x)\{f''(x) +  d(x)\}, \mbox{ for $x\in C$},\label{nec-cond-THM-for-C0.5}
\end{align}
where 
\begin{align}
d(x) := & g'''(\psi_{\lambda,C}(x))(\psi'_{\lambda,C}(x))^2\left(h(x)-\psi_{\lambda,C}(x)\right)\\
&+ 2g''(\psi_{\lambda,C}(x))\psi'_{\lambda,C}(x)(h'(x)-\psi'_{\lambda,C}(x)) + g'(\psi_{\lambda,C}(x))h''(x). \label{newcoco}
\end{align}
Moreover, the equilibrium intensity function $\lambda$ satisfies
\begin{align} 
& \lambda(x)(h(x)-\psi_{\lambda,C}(x))^2g''(\psi_{\lambda,C}(x))   \\
&= \mu(x)\{f'(x) +  h'(x)g'(\psi_{\lambda,C}(x))\} +  \frac{1}{2}\sigma^2(x)\{f''(x) +  d(x)\},  \mbox{ for $x\in C$}.
\label{nec-cond-THM-for-C0}
\end{align}
\end{thm}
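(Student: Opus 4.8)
The plan is to derive everything from two facts: the characterization condition \eqref{mainthmcond3}, which under the hypothesis $\lambda(x)>0$ on $C$ holds at \emph{every} $x\in C$, and the Dynkin--Feynman--Kac identities
\[
A_X\phi_{\lambda,C}(x)=\lambda(x)\bigl(\phi_{\lambda,C}(x)-f(x)\bigr),\qquad A_X\psi_{\lambda,C}(x)=\lambda(x)\bigl(\psi_{\lambda,C}(x)-h(x)\bigr),\qquad x\in C.
\]
First I would observe that \eqref{mainthmcond3} rewrites as the explicit representation $\phi_{\lambda,C}=f+g'(\psi_{\lambda,C})(h-\psi_{\lambda,C})$ on $C$; since $f,h\in\mathcal{C}^2(E)$, $g\in\mathcal{C}^3(\mathbb{R})$ and $\psi_{\lambda,C}\in\mathcal{C}^2(C)$ by assumption, this shows $\phi_{\lambda,C}\in\mathcal{C}^2(C)$, so that $A_X\phi_{\lambda,C}$ and $A_X\psi_{\lambda,C}$ are the usual second-order operators $\mu u'+\tfrac12\sigma^2u''$ on $C$.

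Next I would establish the two Dynkin identities. One route is to invoke the appendix expression for the equilibrium-condition limit at an interior point $x\in C$ (Lemma \ref{prop-calc-eq-cond1}): decomposing $J_{\hat\tau}(x)-J_{\hat\tau\diamond\tau^{\eta,D}(h)}(x)$ into its $\phi$-part, whose rescaled limit is computed in Lemma \ref{main-lemma}, and the analogous $\psi$-part, and then choosing the trivial deviation $\eta=\lambda$, $D=C$, which makes $\hat\tau\diamond\tau^{\eta,D}(h)=\hat\tau$ (by the memorylessness of the Cox process) and hence forces the limit to be $0$, yields exactly the two identities above. Equivalently, these are just the Feynman--Kac equations satisfied by $\phi_{\lambda,C}$ and $\psi_{\lambda,C}$ for the diffusion killed at rate $\lambda(X_t)$ while inside $C$, valid once we know these functions are $\mathcal{C}^2$ on $C$.

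With these in hand I would combine them: inserting $\phi_{\lambda,C}(x)-f(x)=g'(\psi_{\lambda,C}(x))\bigl(h(x)-\psi_{\lambda,C}(x)\bigr)$ (condition \eqref{mainthmcond3}) into the two identities gives
\[
A_X\phi_{\lambda,C}(x)+g'(\psi_{\lambda,C}(x))A_X\psi_{\lambda,C}(x)=\lambda(x)\Bigl[\bigl(\phi_{\lambda,C}(x)-f(x)\bigr)-g'(\psi_{\lambda,C}(x))\bigl(h(x)-\psi_{\lambda,C}(x)\bigr)\Bigr]=0,\quad x\in C.
\]
To turn this into \eqref{nec-cond-THM-for-C0.5} I would differentiate the representation $\phi_{\lambda,C}=f+g'(\psi_{\lambda,C})(h-\psi_{\lambda,C})$ twice by the product and chain rules, substitute $A_Xu=\mu u'+\tfrac12\sigma^2u''$, and collect terms: the contributions $\mu g'(\psi_{\lambda,C})\psi'_{\lambda,C}$ and $\tfrac12\sigma^2 g'(\psi_{\lambda,C})\psi''_{\lambda,C}$ coming from $A_X\phi_{\lambda,C}$ cancel against $g'(\psi_{\lambda,C})A_X\psi_{\lambda,C}$, and what is left, after moving the $g''(\psi_{\lambda,C})$-terms carrying the factor $h-\psi_{\lambda,C}$ to the other side, is precisely $-\bigl(\mu\psi'_{\lambda,C}+\tfrac12\sigma^2\psi''_{\lambda,C}\bigr)(h-\psi_{\lambda,C})g''(\psi_{\lambda,C})$ on one side and $\mu\{f'+h'g'(\psi_{\lambda,C})\}+\tfrac12\sigma^2\{f''+d\}$, with $d$ as in \eqref{newcoco}, on the other; this is \eqref{nec-cond-THM-for-C0.5}. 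Finally, multiplying the identity $\lambda(x)(h(x)-\psi_{\lambda,C}(x))=-A_X\psi_{\lambda,C}(x)=-\bigl(\mu(x)\psi'_{\lambda,C}(x)+\tfrac12\sigma^2(x)\psi''_{\lambda,C}(x)\bigr)$ through by $(h(x)-\psi_{\lambda,C}(x))g''(\psi_{\lambda,C}(x))$ makes the left side equal $\lambda(x)(h(x)-\psi_{\lambda,C}(x))^2g''(\psi_{\lambda,C}(x))$ and the right side equal to the left-hand side of \eqref{nec-cond-THM-for-C0.5}, hence to its right-hand side, which is \eqref{nec-cond-THM-for-C0}.

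The conceptual heart is the identity $A_X\phi_{\lambda,C}+g'(\psi_{\lambda,C})A_X\psi_{\lambda,C}=0$ on $C$, and the main obstacle is justifying the two Dynkin identities with the limited regularity available — only $\psi_{\lambda,C}\in\mathcal{C}^2(C)$ is assumed and $\phi_{\lambda,C}\in\mathcal{C}^2(C)$ must be deduced afterwards from \eqref{mainthmcond3} — which is where one needs the appendix limit computations, carried out in the spirit of Lemma \ref{main-lemma}, or a careful Feynman--Kac argument for the killed diffusion. The remaining work, the double differentiation of the representation formula and the bookkeeping that produces the term $d$, is routine but is the place most prone to sign and term errors.
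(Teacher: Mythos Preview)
Your proposal is correct and follows essentially the same route as the paper: both proofs rest on condition \eqref{mainthmcond3} together with the Dynkin identities $A_X\phi_{\lambda,C}=\lambda(\phi_{\lambda,C}-f)$ and $A_X\psi_{\lambda,C}=\lambda(\psi_{\lambda,C}-h)$ (the paper's Lemma \ref{main-lemma2}), followed by explicit differentiation of $g'(\psi_{\lambda,C})(h-\psi_{\lambda,C})$. The only differences are organizational: the paper applies $A_X$ directly to condition \eqref{mainthmcond3} and computes $A_X\bigl(g'(\psi)(h-\psi)\bigr)$, deriving the $\lambda$-equation \eqref{nec-cond-THM-for-C0} first and then the ODE \eqref{nec-cond-THM-for-C0.5}, whereas you package the same cancellation more cleanly as $A_X\phi_{\lambda,C}+g'(\psi_{\lambda,C})A_X\psi_{\lambda,C}=0$ and obtain the two equations in the reverse order.
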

\begin{proof} Suppose that $\tau^{\lambda,C}$ is an equilibrium stopping time with $\lambda(x)>0$ for $x\in C$. Consider an arbitrary fixed $x\in C$. By definition $J_{\lambda,C}(x) = \phi_{\lambda,C}(x) + g(\psi_{\lambda,C}(x))$ and hence
\begin{align} \label{nec-cond-THM-for-C-1}
A_X(J_{\lambda,C}(x)-\phi_{\lambda,C}(x) - g(\psi_{\lambda,C}(x)))=0.
\end{align}
Condition \eqref{mainthmcond3} holds by Theorem \ref{main-thm-imm-stp} and from simple calculations follows, 
\begin{align}
&J_{\lambda,C}(x)-\phi_{\lambda,C}(x) - g(\psi_{\lambda,C}(x))\\
& = f(x) -\phi_{\lambda,C}(x) + g'(\psi_{\lambda,C}(x))\left(h(x)-\psi_{\lambda,C}(x)\right).
\label{nec-cond-THM-for-C-2}
\end{align}
We will notationally suppress $\lambda,C$ and $(x)$ in the rest of the proof. From \eqref{nec-cond-THM-for-C-1} and \eqref{nec-cond-THM-for-C-2} follows that $
A_Xf - A_X\phi  + A_X\left(g'(\psi)\left(h-\psi\right)\right) = 0$
which implies that
\begin{align} 
A_X\phi  = A_Xf + A_X\left(g'(\psi)\left(h-\psi\right)\right).
\label{nec-cond-THM-for-C-3}
\end{align}
Now use Lemma \ref{main-lemma2} and then condition \eqref{mainthmcond3} to see that
\begin{align}
A_X\phi  & = \lambda(\phi-f) \\
& = \lambda g'(\psi)\left(h-\psi\right).\label{nec-cond-THM-for-C-5}
\end{align}
Let us investigate the expressions in the right side of \eqref{nec-cond-THM-for-C-3}. The assumed differentiability implies that
\begin{align}
A_Xf & = \mu f' + \frac{1}{2}\sigma^2f'', \enskip \mbox{ and}\\  
A_X\left(g'(\psi)\left(h-\psi\right)\right) &= 
\mu(g'(\psi)\left(h-\psi\right))' + \frac{1}{2}\sigma^2 (g'(\psi)\left(h-\psi\right))''.
\end{align}
Use standard differentiation rules to find that the derivatives in the last expression can be written as
\begin{align}
 (g'(\psi)\left(h-\psi\right))'  
= \psi' b +  h'g'(\psi),
\end{align}
where we use the temporary notation $b:=g''(\psi)\left(h-\psi\right)-g'(\psi)$, and 
\begin{align}
(g'(\psi)\left(h-\psi\right))''
= d + \psi'' b,
\end{align}
where $d$ is defined in \eqref{newcoco}.
It follows that the right side of \eqref{nec-cond-THM-for-C-3} can be written as
\begin{align}
& A_Xf + A_X\left(g'(\psi)\left(h-\psi\right)\right)\\ 
& = \mu\{f' +  h'g'(\psi)\} +  \frac{1}{2}\sigma^2\{f'' +  d \} + b\{\mu \psi'  + \frac{1}{2}\sigma^2 \psi''\} \\
& = \mu\{f' +  h'g'(\psi)\} +  \frac{1}{2}\sigma^2\{f'' +  d \} + bA_X\psi\\
& = \mu\{f' +  h'g'(\psi)\} +  \frac{1}{2}\sigma^2\{f'' +  d \} + b\lambda(\psi-h),
\end{align}
where we relied on Lemma \ref{main-lemma2} (which analogously holds also for the function $\psi$) and the differential operator 
form of $A_X$. 
Use the equality above, \eqref{nec-cond-THM-for-C-3} and \eqref{nec-cond-THM-for-C-5} to obtain 
\begin{align}
\lambda g'(\psi)\left(h-\psi\right) = \mu\{f' +  h'g'(\psi)\} +  \frac{1}{2}\sigma^2\{f'' +  d \} + b\lambda(\psi-h).
\end{align}
This implies that
\begin{align}
\lambda\left(h-\psi\right)\{g'(\psi) + b\} = \mu\{f' +  h'g'(\psi)\} +  \frac{1}{2}\sigma^2\{f'' +  d\}.
\end{align}
Use that $b + g'(\psi) =g''(\psi )\left(h-\psi\right)$ to see that \eqref{nec-cond-THM-for-C0} follows. Now use Lemma \ref{main-lemma2} to obtain $A_X\psi = \lambda (\psi -h)$. Using the assumed differentiability for $\psi$ we also obtain $A_X\psi = \mu\psi' + \frac{1}{2}\sigma^2\psi''.$ Hence, $\lambda (h-\psi)= -\left(\mu \psi' + \frac{1}{2}\sigma^2 \psi''\right)$ which, together with \eqref{nec-cond-THM-for-C0}, implies that \eqref{nec-cond-THM-for-C0.5} holds. 
\end{proof}

\section{Examples} \label{applications}
The main objectives of the present paper are to formulate and solve time-inconsistent stopping problems of the type \eqref{the-problem} and to define mixed strategies for these problems. 
In this section we first study a variance problem for which it turns out a mixed equilibrium but no pure equilibrium exists.
Second, we study a mean-variance problem for which it turns out a pure equilibrium or no equilibrium exists depending on the parameters, in particular there is no mixed equilibrium.
We also present a simple example with two different equilibria, showing that we cannot generally expect equilibrium uniqueness.

\subsection{A variance stopping problem} \label{applications:var}
The variance stopping problem corresponds to the time-inconsistent problem of trying to maximize 
\[\mbox{Var}_x(X_\tau).\]
An economic motivation for a variance stopping problem is found in \cite{pedersen2011explicit} and the references therein. Variance stopping problems are also studied in \cite{gad2016optimal,gad2015variance} using randomized stopping times. We also refer to \cite{buonaguidi2015remark,buonaguidi2018some}. All these references consider the problem from the perspective of the pre-commitment approach.

The variance problem is given by 
$f(x):=x^2, g(x):=-x^2$ and $h(x):=x$. To see this note that 
\begin{align}
J_{\tau}(x) &= \phi_\tau(x) + g(\psi_\tau(x))\\ 
&= \E_x(X_\tau^2) - \E_x^2(X_\tau) \\
& = \mbox{Var}_x(X_\tau). 
\end{align} 
We consider a positive state process $X$. In this case Assumption \ref{ass:general-lemmas} is satisfied. 
It follows that   
\begin{align} \label{var-help-eq}
g'(\psi_{\lambda,C}(x))= -2\psi_{\lambda,C}(x) \mbox{ and } 
J_{\lambda,C}(x) =  \phi_{\lambda,C}(x) - \psi^2_{\lambda,C}(x).
\end{align}
Hence, simple calculations yield 
\begin{align} 
& f(x) -\phi_{\lambda,C}(x) + g'(\psi_{\lambda,C}(x))\left(h(x)-\psi_{\lambda,C}(x)\right)\\
&= - (\phi_{\lambda,C}(x)-\psi^2_{\lambda,C}(x)) + x^2 -2x\psi_{\lambda,C}(x)+ \psi^2_{\lambda,C}(x)\\
&= - J_{\lambda,C}(x) + (\psi_{\lambda,C}(x)-x)^2 \label{var-1}.
\end{align}
An equilibrium stopping time should typically not recommend immediate stopping since this corresponds to minimal variance, see also Remark \ref{new-var-rem} below. Hence, we make an ansatz with $C=E$. Specifically, we make the ansatz that an equilibrium stopping time is given by $\tau^{\lambda,E}$ for some strictly positive intensity function $\lambda$ which is to be determined. We will use the notation $\tau^{\lambda,E} = \tau^{\lambda}$, $\psi_{\lambda,E}= \psi_{\lambda}$ etc.

We immediately obtain the following result.
\begin{thm} \label{thm:eq-condition} A stopping time $\tau^\lambda \in \mathcal{N}$, with $\lambda(x)>0$ for each $x\in \state$, is an equilibrium stopping time for the variance problem if and only if
\begin{align}
J_\lambda(x) &= (\psi_{\lambda}(x)-x)^2, \enskip \mbox{ for $x  \in E$} \label{equi-char}.
\end{align}
Moreover, if \eqref{equi-char}  holds then $J_\lambda$ given by \eqref{equi-char} is the corresponding equilibrium value function. 
\end{thm}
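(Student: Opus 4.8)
The plan is to obtain this equivalence as a direct specialization of the equilibrium characterization in Theorem~\ref{main-thm-imm-stp} to the ansatz $C=\state$. First I would note that since $\state$ is open, the boundary $\partial C=\partial\state$ and the interior $\mathrm{int}(C^c)=\mathrm{int}(\emptyset)$ are both empty, so conditions \eqref{mainthmcond2} and \eqref{mainthmcond5} are vacuously satisfied; and since the hypothesis is that $\lambda(x)>0$ for every $x\in\state$, there is no point of $C$ with $\lambda(x)=0$, so condition \eqref{mainthmcond4} is vacuous as well. Hence, by Theorem~\ref{main-thm-imm-stp}, $\tau^\lambda\in\mathcal{N}$ is an equilibrium stopping time if and only if conditions \eqref{mainthmcond1} and \eqref{mainthmcond3} hold for every $x\in\state$.

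Next I would dispose of condition \eqref{mainthmcond1}. For the variance problem $f(x)+g(h(x))=x^2-x^2=0$, so \eqref{mainthmcond1} reduces to $J_\lambda(x)\geq 0$, which holds automatically because $J_\lambda(x)=\mbox{Var}_x(X_{\tau^\lambda})\geq 0$ (the well-definedness of this quantity being part of the assumption $\tau^\lambda\in\mathcal{N}$, and Assumption~\ref{ass:general-lemmas} holding for this choice of $f,g,h$ as already noted). Then I would rewrite condition \eqref{mainthmcond3}: since $\lambda>0$ everywhere, \eqref{mainthmcond3} must be an equality at every $x\in\state$, and the identity already recorded in \eqref{var-1} shows that its left-hand side equals $-J_\lambda(x)+(\psi_\lambda(x)-x)^2$. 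Therefore \eqref{mainthmcond3} is equivalent to $J_\lambda(x)=(\psi_\lambda(x)-x)^2$ for all $x\in\state$, that is, to \eqref{equi-char}. Combining the two reductions yields the stated equivalence.

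Finally, the ``moreover'' assertion is immediate from Definition~\ref{def:equ_stop_time}: once $\tau^\lambda$ is shown to be an equilibrium stopping time, $J_\lambda$ is by definition the corresponding equilibrium value function, and under \eqref{equi-char} it is given by the right-hand side $(\psi_\lambda(\cdot)-\,\cdot\,)^2$.

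I do not expect a genuine obstacle here; the statement is essentially a bookkeeping exercise built on Theorem~\ref{main-thm-imm-stp} and the already-established computation \eqref{var-1}. The only points requiring a moment's care are checking that conditions \eqref{mainthmcond2}, \eqref{mainthmcond4} and \eqref{mainthmcond5} are indeed vacuous for the ansatz $C=\state$ with strictly positive $\lambda$, and that the trivial inequality $\mbox{Var}_x(X_{\tau^\lambda})\geq 0$ takes care of \eqref{mainthmcond1}.
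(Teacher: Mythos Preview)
Your proposal is correct and follows essentially the same route as the paper: specialize Theorem~\ref{main-thm-imm-stp} to $C=E$ with $\lambda>0$, observe that \eqref{mainthmcond2}, \eqref{mainthmcond4}, \eqref{mainthmcond5} are vacuous, and reduce \eqref{mainthmcond3} to \eqref{equi-char} via \eqref{var-1}. The only cosmetic difference is that you dispose of \eqref{mainthmcond1} by the inequality $\mbox{Var}_x(X_{\tau^\lambda})\geq 0$, whereas the paper derives it from \eqref{equi-char} itself (since $(\psi_\lambda(x)-x)^2\geq 0$); both are immediate.
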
 
\begin{proof} Use that $h(x)=x$, $- f(x) - g(x) = 0$ and \eqref{var-1} to see that if \eqref{equi-char} holds then \eqref{mainthmcond1} and \eqref{mainthmcond3} hold, whereas \eqref{mainthmcond2}, \eqref{mainthmcond4} and \eqref{mainthmcond5} can be considered trivially fulfilled, since we use $C=E$ and $\lambda(x)>0$. Now, if \eqref{mainthmcond3} holds then it follows from \eqref{var-1} and $C=E$ that \eqref{equi-char} holds. Thus, the first assertion follows from Theorem \ref{main-thm-imm-stp}. The second  assertion follows immediately.
\end{proof}
Let us use the ODE condition \eqref{nec-cond-THM-for-C0.5} in Theorem \ref{nec-cond-THM-for-C} to identify a candidate for $\psi_\lambda$ and then use the result \eqref{nec-cond-THM-for-C0} to identify the corresponding candidate equilibrium  intensity function $\lambda$. In the present case the ODE  \eqref{nec-cond-THM-for-C0.5} is
\begin{align}
& -\left(\mu(x)\psi'_{\lambda}(x) + \frac{1}{2}\sigma^2(x)\psi''_{\lambda}(x)\right)(x-\psi_{\lambda}(x))(-2)\\
& = \mu(x)\{2x -  2\psi_{\lambda}(x)\} +  \frac{1}{2}\sigma^2(x)\{2 +  d(x)\},
\end{align}
with 
$d(x) = 4(\psi'_{\lambda}(x))^2-4\psi'_{\lambda}(x)$, where we used \eqref{var-help-eq}, $f'(x) = 2x, g'''(x)=0$ etc. We note that if $x-\psi_{\lambda}(x) \neq 0$, then the ODE simplifies to 
\begin{align}
\mu(x)\psi'_{\lambda}(x) + \frac{1}{2}\sigma^2(x)\psi''_{\lambda}(x) 
= \mu(x) +  \frac{1}{2}\sigma^2(x)\frac{(\psi'_{\lambda}(x)-1)^2 + (\psi'_{\lambda}(x))^2}{x-\psi_{\lambda}(x)}.
\label{var-ODE}
\end{align}
In case $X$ is a geometric Brownian motion it turns out that the problem can be solved explicitly. Thus, from now we assume (in this example)  that
\begin{align}
dX_t = \mu X_t dt + \sigma X_t dW_t \label{GBM}.
\end{align}
In this case \eqref{var-ODE} becomes
\begin{align}
& \mu x (\psi'_{\lambda}(x)-1) +  
\frac{1}{2}\sigma^2x^2\left(\psi''_{\lambda}(x)-\frac{(\psi'_{\lambda}(x)-1)^2 + (\psi'_{\lambda}(x))^2}{x-\psi_{\lambda}(x)}\right) =  0.\label{var-ODE2} 
\end{align}
The ODE \eqref{var-ODE2}  has, under appropriate assumptions for the constants $\mu$ and $\sigma$, one solution (at least) on the form $\psi_{\lambda}(x)=cx$ for some constant $c \neq0,1$. To see this use that $\psi_{\lambda}''(x)=(cx)''=0$ and that $x>0$, since $E=(0,\infty)$ for the GBM. Now use \eqref{nec-cond-THM-for-C0} and the candidate $\psi_{\lambda}(x)=cx$ to obtain the corresponding candidate intensity 
\begin{align} 
\lambda(x) & = \frac{\mu(x)\{f'(x) +  h'(x)g'(\psi_{\lambda}(x))\} +  \frac{1}{2}\sigma^2(x)\{f''(x) +  d(x).\}}{(h(x)-\psi_{\lambda}(x))^2g''(\psi_{\lambda}(x))}\\
& = \frac{\mu  \{1 -c\} +  \frac{1}{2}\sigma^2 \{1 +  2c^2-2c.\}}{- (1-c)^2}.\label{var-lambda-c}
\end{align}
This means the candidate solution $\psi_{\lambda}(x)=cx$ corresponds to using a constant intensity   (depending on the constant $c$). This constant candidate intensity could, with some effort, be found by identifying the constant(s) $c$ such that $\psi_{\lambda}(x)=cx$ solves \eqref{var-ODE2}, and inserting this $c$ into \eqref{var-lambda-c} and thereby obtaining a corresponding constant equilibrium intensity candidate. We shall, however, instead use Theorem \ref{thm:eq-condition} to identify the constant equilibrium intensity (it turns out that only one constant equilibrium intensity exists) and thereby verify that the ansatz works. This is done in the proof of Theorem \ref{thm:var}.

\begin{thm}  \label{thm:var} Let $X$ be given by \eqref{GBM} where the constants $\mu$ and $\sigma$ satisfy $\sigma^2>0$ and
\begin{align} \label{parameter-cond}
2\mu+\sigma^2<0.
\end{align}
Then $\tau^{\lambda}$, with
\begin{align} \label{eq-lambda}
\lambda= \sqrt{\frac{-\mu^2(2\mu+\sigma^2)}{\sigma^2}}, 
\end{align}
is an equilibrium stopping time. The corresponding equilibrium value function is
\begin{align*}
J_{\lambda}(x)= 
\frac{1}{\left(
\sqrt{\frac{-(2\mu+\sigma^2)}{\sigma^2}}+1
\right)^2}x^2.
\end{align*}
\end{thm}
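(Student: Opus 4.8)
The plan is to verify directly that the candidate $\tau^{\lambda}$ with the stated constant intensity $\lambda$ satisfies the equilibrium characterization in Theorem \ref{thm:eq-condition}, i.e.\ that $J_{\lambda}(x) = (\psi_{\lambda}(x)-x)^2$ for all $x \in E = (0,\infty)$. Guided by the ansatz discussion, I would look for $\psi_{\lambda}$ of the form $\psi_{\lambda}(x) = cx$ and for a constant intensity $\lambda$, so that everything reduces to algebra in the two unknowns $c$ and $\lambda$.

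First I would compute $\psi_{\lambda}(x) = \E_x(X_{\tau^\lambda})$ explicitly. For the geometric Brownian motion \eqref{GBM} stopped at the first jump of the Cox process with constant intensity $\lambda$, independence of $\tau^\lambda$ (exponential with rate $\lambda$) from $X$ gives $\psi_{\lambda}(x) = \E_x\big(\E_x(X_{\tau^\lambda}\mid \tau^\lambda)\big) = \E_x(x e^{\mu \tau^\lambda}) = x\int_0^\infty e^{\mu t}\lambda e^{-\lambda t}dt = \frac{\lambda}{\lambda - \mu}x$, which is finite and of the form $cx$ with $c = \lambda/(\lambda-\mu)$ precisely when $\lambda > \mu$ (here $\mu<0$ by \eqref{parameter-cond}, so this is automatic and $c\in(0,1)$). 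Similarly $\phi_{\lambda}(x) = \E_x(X_{\tau^\lambda}^2) = x^2\int_0^\infty e^{(2\mu+\sigma^2)t}\lambda e^{-\lambda t}dt = \frac{\lambda}{\lambda - 2\mu - \sigma^2}x^2$, which is finite of the form (const)$\cdot x^2$ exactly when $\lambda > 2\mu + \sigma^2$; again automatic since $2\mu+\sigma^2 < 0$. These computations also confirm $\phi_\lambda,\psi_\lambda$ are continuous, so $\tau^\lambda \in \mathcal{N}$.

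Next I would impose the equilibrium condition $J_\lambda(x) = \phi_\lambda(x) - \psi_\lambda^2(x) = (\psi_\lambda(x)-x)^2$, i.e.\ $\phi_\lambda(x) = \psi_\lambda^2(x) + (\psi_\lambda(x)-x)^2 = 2\psi_\lambda^2(x) - 2x\psi_\lambda(x) + x^2$. Dividing by $x^2$ and writing $p := \frac{\lambda}{\lambda-2\mu-\sigma^2}$, $c := \frac{\lambda}{\lambda-\mu}$, this becomes the scalar equation $p = 2c^2 - 2c + 1$. Substituting the expressions for $p$ and $c$ in terms of $\lambda,\mu,\sigma$ and clearing denominators should reduce to a polynomial equation in $\lambda$ whose relevant positive root is $\lambda = \sqrt{-\mu^2(2\mu+\sigma^2)/\sigma^2}$ (well-defined and positive by \eqref{parameter-cond}); I would check this is indeed the unique positive solution. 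Once $\lambda$ is pinned down, $c = \lambda/(\lambda-\mu)$ and a short simplification using $\lambda^2 = -\mu^2(2\mu+\sigma^2)/\sigma^2$ should give $c = \big(\sqrt{-(2\mu+\sigma^2)/\sigma^2}+1\big)^{-1}$ (equivalently $1-c$ expressed cleanly), so that $J_\lambda(x) = (c-1)^2 x^2 = \big(\sqrt{-(2\mu+\sigma^2)/\sigma^2}+1\big)^{-2}x^2$, matching the claimed value function. Then Theorem \ref{thm:eq-condition} (second assertion) immediately yields that $J_\lambda$ is the equilibrium value function.

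The main obstacle is bookkeeping rather than conceptual: one must be careful that the integrals defining $\phi_\lambda$ and $\psi_\lambda$ converge (this is where \eqref{parameter-cond} enters and guarantees $\lambda > \mu$ and $\lambda > 2\mu+\sigma^2$ for the chosen $\lambda$, and also that $c\neq 0,1$ so we are in the nontrivial regime of \eqref{var-1}), and the algebraic reduction of $p = 2c^2-2c+1$ to the explicit formula for $\lambda$ must be done without sign errors — in particular verifying that the discriminant/root selection genuinely produces the stated $\lambda$ and that no spurious root needs to be discarded. A final remark: since we took $C=E$ and $\lambda>0$ everywhere, conditions \eqref{mainthmcond2}, \eqref{mainthmcond4}, \eqref{mainthmcond5} are vacuous (no boundary, no points with $\lambda=0$), so no smooth-fit or boundary analysis is needed and the verification is genuinely just the single identity \eqref{equi-char}.
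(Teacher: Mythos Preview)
Your proposal is correct and follows essentially the same route as the paper: compute $\psi_\lambda$ and $\phi_\lambda$ explicitly for constant $\lambda$ via the exponential distribution of $\tau^\lambda$, substitute into the single identity of Theorem~\ref{thm:eq-condition}, and solve the resulting scalar equation (the paper writes it as $2\mu^3+\mu^2\sigma^2+\sigma^2\lambda^2=0$) for $\lambda$. One cosmetic slip: it is $1-c$, not $c$, that equals $\bigl(\sqrt{-(2\mu+\sigma^2)/\sigma^2}+1\bigr)^{-1}$, but since $J_\lambda(x)=(c-1)^2x^2=(1-c)^2x^2$ your final value-function formula is unaffected.
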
 
			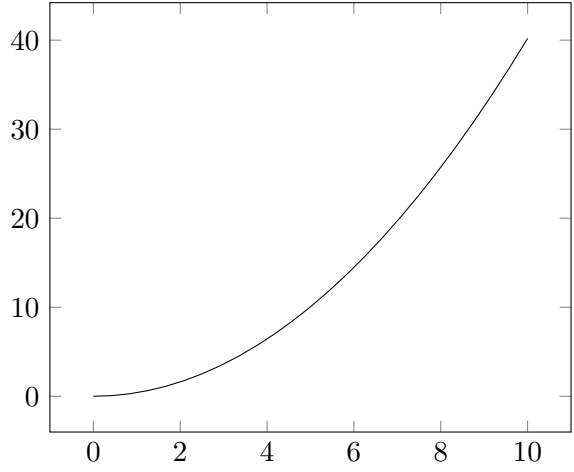
\begin{figure}[ht]
	\begin{center}
	\begin{tikzpicture}
	\begin{axis} 
	\addplot[domain=0:10] {0.401923788647*x^2}; 
	\end{axis} 
	\end{tikzpicture} 
		\end{center}
			\caption{The equilibrium value function $x\mapsto J_{\lambda}(x)$, where $\lambda$ is given by \eqref{eq-lambda}, for the parameters $\mu=-0.1$ and $\sigma^2=0.15$. In this case $\lambda \approx 0.0577$.}
\end{figure}
\begin{rem}  From the formula for the variance of the log-normal $X_t$  it follows that $\lim_{t\rightarrow \infty}\mbox{Var}_x(X_t)=0$ for any $x\in \state $ if \eqref{parameter-cond} holds, whereas $\lim_{t\rightarrow \infty}\mbox{Var}_x(X_t)=\infty$ for any $x\in \state $ if \eqref{parameter-cond} does not hold. Hence, we only consider the case when \eqref{parameter-cond} holds. We remark that  condition \eqref{parameter-cond} is also used in \cite{pedersen2011explicit}. 
\end{rem}
\begin{rem} \label{new-var-rem} For our variance problem holds, as expected, that an equilibrium stopping time cannot recommend immediate stopping at any $x$. To see this first verify that \eqref{mainthmcond2} is violated for $x\in int(C^c)$. Moreover, if $C^c$ has no interior then \eqref{mainthmcond5} is violated at every $x\in \partial C$; which can be shown using arguments similar to those in the proof of Theorem \ref{mean-varthm} below and in particular \eqref{prob-tau-b} and \eqref{prob-tau-c-d}. 
\end{rem}

\begin{proof} We remark that it follows from the calculations below that $\tau^\lambda$ is admissible. Using that $X_t=xe^{\left(\mu-\frac{1}{2}\sigma^2\right)t+\sigma W_t}$ is log-normal and conditioning on the exponentially distributed stopping time $\tau^\lambda$ we directly obtain
\begin{align}
\psi_\lambda(x)= \E_x(X_{\tau^\lambda}) = \frac{\lambda}{\lambda-\mu}x \enskip \mbox{ and }\enskip 
\phi_\lambda(x)   = \E_x(X_{\tau^\lambda}^2) = \frac{\lambda}{\lambda-2\mu-\sigma^2}x^2.
\end{align}
Here we relied on the denominators being positive, which follows directly from $\lambda>0$ and $\mu<0$, and $\lambda>0$ and \eqref{parameter-cond} respectively; where \eqref{parameter-cond} implied that $\mu<0$ and $\lambda>0$.  
It follows that 
\begin{align}
\psi^2_\lambda(x)  = \frac{\lambda^2}{(\lambda-\mu)^2}x^2, \enskip
(\psi_\lambda(x)-x)^2 =  \frac{\mu^2}{(\lambda-\mu)^2}x^2.
\end{align}
Using \eqref{var-help-eq} and \eqref{eq-lambda} we thus obtain, for any fixed $x \in E$,
\begin{align}
\frac{J_\lambda(x) - (\psi_\lambda(x)-x)^2}{x^2}
& = \frac{\phi_\lambda(x)- \psi^2_\lambda(x) - (\psi_\lambda(x)-x)^2}{x^2}\\
& = \frac{2\mu^3+\mu^2\sigma^2+\sigma^2\lambda^2}{(\lambda-2\mu-\sigma^2)(\lambda-\mu)^2}.
\end{align}
From Theorem \ref{thm:eq-condition} it therefore follows that $\tau^\lambda$ is an equilibrium stopping time when $2\mu^3+\mu^2\sigma^2+\sigma^2\lambda^2 = 0$, i.e. when $\lambda$ satisfies
$\lambda^2 = \frac{-\mu^2(2\mu+\sigma^2)}{\sigma^2}$. This proves the first assertion. Using the calculations above, $\mu < 0$ and Theorem \ref{thm:eq-condition} it is easy to find the equilibrium value function. 
\end{proof}
\begin{rem}
	In \cite{gad2015variance}, the results from \cite{pedersen2011explicit} on the pre-commitment version of the variance stopping problem are generalized to underlying geometric L\'evy processes. In this paper, we have decided to developed the theory only for underlying diffusion processes to avoid certain technical difficulties. Therefore, applying our time-consistent approach to underlying jump processes would need some further work that we do not carry out here. We, nonetheless, want to mention that obtaining equilibrium conditions of the form \eqref{equi-char} for the variance problem for underlying geometric L\'evy processes of the form\ $X_t=X_0e^{L_t},$ $L$ a L\'evy process, can also be obtained. It is then interesting to note that considering $\tau^\lambda$ for a constant $\lambda>0$ yields -- under suitable integrability conditions -- that
	\begin{align*}
	\psi_\lambda(x)&= \E_1(X_{\tau^\lambda})x = a_\lambda x, \enskip a_\lambda=\frac{\lambda}{\lambda-\Psi_L(1)} \enskip \mbox{ and }\\
	\phi_\lambda(x)&= \E_1(X_{\tau^\lambda}^2)x^2 = b_\lambda x^2, \enskip b_\lambda=\frac{\lambda}{\lambda-\Psi_{2L}(1)},
	\end{align*}
	where $\Psi$ denotes the Laplace exponent. Hence, a similar calculation as in the previous proof yields both a formula for $\lambda$ and the corresponding equilibrium value function also in this case. 
\end{rem}

\subsection{A mean-variance stopping problem} \label{applications:mean-var}
Mean-variance optimization is one of the classical problems in financial economics. It was first studied in the context of optimal portfolio allocation in the seminal paper \cite{markowitz1952portfolio}. A vast number of papers on the topic have since then been published. For short surveys and economic motivation of mean-variance problems we refer to \cite{bjork2014mean,pedersen2016optimal} and the references therein. The mean-variance stopping problem corresponds to the time-inconsistent problem of trying to maximize
\[\E_x(X_\tau) - \gamma \mbox{Var}_x(X_\tau), \mbox{ with $\gamma>0$.}\]
Here $\gamma$ is a given constant representing risk-aversion.  
In \cite{pedersen2016optimal} a mean-variance stopping problem for a geometric Brownian motion is studied using the dynamic optimality approach and the pre-commitment approach. In \cite{bayraktar2018} a mean-variance stopping problem for a general discrete time Markov chain is studied, see also Section \ref{equilibrium-disc}. In \cite{bjork2014mean} a mean-variance control problem is studied using the general game-theoretic framework for time-inconsistent stochastic control of \cite{tomas-continFORTH}.

The mean-variance stopping problem is given by $f(x):=-\gamma x^2, g(x):=x+\gamma x^2$ and $h(x):=x$. To see this note that
\begin{align}
J_{\tau}(x) &= \phi_\tau(x) + g(\psi_\tau(x))\\ 
&= -\gamma\E_x(X_\tau^2) + \E_x(X_\tau) + \gamma \E_x^2(X_\tau) \\
&= \E_x(X_\tau) - \gamma \mbox{Var}_x(X_\tau).
\end{align} 
We consider a positive state process $X$. In this case Assumption \ref{ass:general-lemmas} is satisfied. Note that $g'(h(x))=1+ 2 \gamma x$, $g'(\psi_{\lambda,C}(x)) = 1 + 2\gamma \psi_{\lambda,C}(x)$, and 
 $J_{\lambda,C}(x) =   \phi_{\lambda,C}(x) + \psi_{\lambda,C}(x)+\gamma \psi^2_{\lambda,C}(x)$. Therefore,  simple calculations give
\begin{align} 
& f(x) -\phi_{\lambda,C}(x) + g'(\psi_{\lambda,C}(x))\left(h(x)-\psi_{\lambda,C}(x)\right)\\
& = -\gamma x^2 -\phi_{\lambda,C}(x) + (1 + 2\gamma \psi_{\lambda,C}(x))\left(x-\psi_{\lambda,C}(x)\right)\\
& = x -J_{\lambda,C}(x)-\gamma(\psi_{\lambda,C}(x)-x)^2.
\end{align}
It follows that conditions \eqref{mainthmcond3} and \eqref{mainthmcond4} can be written as 
\begin{align}
J_{\lambda,C}(x) & = x-\gamma(\psi_{\lambda,C}(x)-x)^2,\enskip\mbox{ for   $x \in C$ with $\lambda(x)>0$}, \label{mean-vareq1}\\
J_{\lambda,C}(x) &\geq x-\gamma(\psi_{\lambda,C}(x)-x)^2,\enskip\mbox{ for   $x \in C$ with $\lambda(x)=0$}.\label{mean-vareq2}
\end{align}
Using that $f(x)+g(h(x))=x$ we write condition \eqref{mainthmcond1} as,
\begin{align}
J_{\lambda,C}(x) \geq x,\enskip\mbox{ for $x \in C$}. \label{mean-vareq3}
\end{align}
Let us again consider the geometric Brownian motion. In the typical case it is reasonable to suppose that $J_{\lambda,C}(x)-x>0$ for $x \in C$ and in this case we note, using Lemma \ref{prop-calc-eq-cond1}, that if $\tau^{\eta,D}\in \mathcal{N}$ with $\eta=0$, then, for $x\in C \cap D$, 
\begin{align}
& \lim_{h\searrow 0}\frac{J_{\tau^{\lambda,C}}(x)-J_{\tau^{\lambda,C}\diamond \tau^{\eta,D}(h)}(x)}{\E_x(\tau_h)}\\ 
& \enskip = \lambda(x)\{f(x)-\phi_{\lambda,C}(x)+ g'(\psi_{\lambda,C}(x))\left(h(x)-\psi_{\lambda,C}(x)\right)\}\\
& \enskip= \lambda(x)\{x -J_{\lambda,C}(x)-\gamma(\psi_{\lambda,C}(x)-x)^2\}\\
& \enskip<0.\label{mean-var:nomix}
\end{align}
Consequently we make the ansatz $\lambda(x)=0$  for $x\in C$. Specifically, we make the ansatz that $\tau^C$ for $C=(0,b)$ is an equilibrium stopping time for some $b$ to be determined. We start by noting that if $\tau^C$ satisfies \eqref{mean-vareq3} then condition \eqref{mainthmcond1} and condition \eqref{mainthmcond4} are satisfied, and condition \eqref{mainthmcond3} is irrelevant (since the ansatz is $\lambda=0$ on $C$). Hence, if we can find a set $C=(0,b)$ such that \eqref{mean-vareq3},  \eqref{mainthmcond2} and \eqref{mainthmcond5} are satisfied then $\tau^C$ is an equilibrium strategy.

\begin{thm} \label{mean-varthm} Let $X$ be given by
\[dX_t = \mu X_t dt + \sigma X_t dW_t, \enskip \mbox{ where $\sigma^2>0$.}\]
If   $\mu \in  (0,\sigma^2/4]$, then $\hat\tau=\inf\{t\geq0: X_t \geq b\}$ with $b=\frac{\xi}{\gamma(1-\xi)}$, where
 $\xi:=\frac{2\mu}{\sigma^2}$, is an equilibrium stopping time and the corresponding equilibrium value function is,
\[J_{\hat\tau}(x)=\begin{cases}  
	x,&\;x \geq b,\\
x^{1-\xi}(b^\xi-\gamma b^{1+\xi}) + \gamma b^{2\xi} x^{2-2\xi},&\; x<b.
	\end{cases}  
	\]
If $\mu\in(\sigma^2/4,\sigma^2/2)$, then no equilibrium stopping time exists. 
\end{thm}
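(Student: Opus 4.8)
The plan is to apply Theorem \ref{main-thm-imm-stp}. Write $\xi=2\mu/\sigma^2$, so $\xi\in(0,1)$ in both regimes; then $x^{1-\xi}$ is, up to a multiplicative constant, the scale function of the geometric Brownian motion $X$, so $A_X(x^{1-\xi})=0$, and recall $g''\equiv 2\gamma>0$ and $\mu/(\gamma\sigma^2)<b$ for $b:=\xi/(\gamma(1-\xi))$.

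For $\mu\in(0,\sigma^2/4]$ I take $C=(0,b)$ and $\lambda\equiv0$, so $\hat\tau=\tau^C=\inf\{t\ge0:X_t\ge b\}$. Since $\mu<\sigma^2/2$ the diffusion satisfies $X_t\to0$ a.s.\ with $0$ inaccessible, so on $\{\tau^C=\infty\}$ the conventions of Remark \ref{convention-tau-infty} give $f(X_{\tau^C})=h(X_{\tau^C})=0$; a scale-function computation then yields $\mathbb{P}_x(\tau^C<\infty)=(x/b)^{1-\xi}$ and hence, for $x<b$, $\psi_C(x)=b^{\xi}x^{1-\xi}$ and $\phi_C(x)=-\gamma b^{1+\xi}x^{1-\xi}$, which shows $\tau^C\in\mathcal{N}$ and that $J_C$ equals the announced formula. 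The verification of \eqref{mainthmcond1}--\eqref{mainthmcond5} then runs as follows: \eqref{mainthmcond3} is vacuous; \eqref{mainthmcond2} reduces to $x(\mu-\gamma\sigma^2x)\le0$ on $(b,\infty)$, which holds because $b>\mu/(\gamma\sigma^2)$; \eqref{mainthmcond4} follows from \eqref{mainthmcond1} since its right-hand side equals $x-\gamma(\psi_C(x)-x)^2\le x\le J_C(x)$; and \eqref{mainthmcond1}, i.e.\ $J_C\ge x$ on $(0,b)$, becomes after the substitution $u=x/b$ the inequality $\Phi(u):=(1-2\xi)u^{1-\xi}+\xi u^{2-2\xi}-(1-\xi)u\ge0$ on $(0,1)$, which I check from $\Phi(1)=\Phi'(1)=0$ and the fact that $\Phi''$ changes sign exactly where $2u^{1-\xi}=1$, so that for $\xi\le\tfrac12$ the function $\Phi$ is concave then convex and hence nonnegative (identically $0$ when $\xi=\tfrac12$). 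Finally, $b$ is chosen exactly so that $\phi_C+g'(\psi_C(b))\psi_C$ is $\mathcal{C}^1$ at $b$, so Theorem \ref{sufficient-cond-thm} applies and, using $A_X(x^{1-\xi})=0$, condition \eqref{sufficient-cond-thm:1} collapses to the scalar inequality $\xi^2-\xi-1\le0$, valid for every $\xi\in(0,1)$; thus \eqref{mainthmcond5} holds and $\hat\tau$ is an equilibrium with value function $J_C$.

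For $\mu\in(\sigma^2/4,\sigma^2/2)$, i.e.\ $\xi\in(\tfrac12,1)$, I argue by contradiction: suppose $\hat\tau=\tau^{\lambda,C}\in\mathcal{N}$ is an equilibrium. First reduce to a pure strategy: if $\lambda(x)>0$ for some $x\in C$, then \eqref{mainthmcond3} and \eqref{mainthmcond1} (in the forms \eqref{mean-vareq1} and \eqref{mean-vareq3}) force $J_{\lambda,C}(x)=x$ and $\psi_{\lambda,C}(x)=x$; since $\{\lambda>0\}\cap C$ is open and $\psi_{\lambda,C}=\mathrm{id}$ there, Lemma \ref{main-lemma2} (for $\psi$) gives $\mu x=A_X\psi_{\lambda,C}(x)=\lambda(x)(\psi_{\lambda,C}(x)-x)=0$, which is impossible; hence $\lambda\equiv0$ on $C$. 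Next, $C=\emptyset$ is excluded because \eqref{mainthmcond2} fails for small $x$, and $C=\state$ because then $\hat\tau=\infty$, so $J\equiv g(0)=0<x$, contradicting \eqref{mainthmcond1}. For a general nonempty proper open $C$ we have $\mathrm{int}(C^c)\subseteq[\mu/(\gamma\sigma^2),\infty)$ by \eqref{mainthmcond2}; a component of the form $(a,\infty)$ is impossible since there $\psi_C\equiv a$ and so $J_C\equiv a<x$, violating \eqref{mainthmcond1}; and because $\lambda\equiv0$ makes $\phi_C,\psi_C$ combinations of $1$ and $x^{1-\xi}$ on each component, smooth fit (Theorem \ref{smoothfit-thm}) holds at every point of $\partial C$, and I would use this over-determination together with \eqref{mainthmcond1} to rule out bounded components with left endpoint in $(0,\infty)$ and continuation regions made of intervals accumulating at $0$, so that the left-most component of $C$ must be $(0,b)$ for some $b>0$. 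On $(0,b)$ the functions $\phi_C,\psi_C,J_C$ are then the explicit ones above (with this $b$), smooth fit at $b$ forces $b=\xi/(\gamma(1-\xi))$, and \eqref{mainthmcond1} reads $\Phi(u)\ge0$ on $(0,1)$; but for $\xi>\tfrac12$ the function $\Phi$ is convex then concave with $\Phi(1)=\Phi'(1)=0$, so $\Phi(u)<0$ for $u$ slightly below $1$, i.e.\ $J_C(x)<x$ just below $b$ --- a contradiction.

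The main obstacle is the non-existence direction, and specifically the structural step that the continuation region of an equilibrium must be the interval $(0,b)$: the conditions of Theorem \ref{main-thm-imm-stp} --- chiefly \eqref{mainthmcond1}, \eqref{mainthmcond2} and the smooth-fit consequences of \eqref{mainthmcond5} --- have to be shown to exclude bounded continuation components, a component touching the right end of $\state$, and continuation regions built from intervals shrinking to $0$. Once $C=(0,b)$ is established, the scale-function computation of $\phi_C,\psi_C,J_C$, the smooth-fit determination of $b$, and the sign analysis of $\Phi$ and of $\xi^2-\xi-1$ are routine; the switch of $\Phi''$ from the concave-then-convex pattern when $\xi\le\tfrac12$ to convex-then-concave when $\xi>\tfrac12$ is precisely what produces the dichotomy in the statement.
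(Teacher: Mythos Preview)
Your existence argument for $\mu\in(0,\sigma^2/4]$ follows the paper's route essentially step for step: compute $\phi_C,\psi_C$ via the scale function, verify \eqref{mainthmcond1}--\eqref{mainthmcond4} directly, and handle \eqref{mainthmcond5} through Theorem~\ref{sufficient-cond-thm}; your reduction of \eqref{mainthmcond1} to the sign of $\Phi$ and of \eqref{sufficient-cond-thm:1} to $\xi^2-\xi-1\le 0$ is exactly what the paper does, only organized a bit more transparently.

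For non-existence your reduction to $\lambda\equiv 0$ is genuinely different from the paper's and cleaner. The paper argues that $\psi_{\lambda,C}(x)=x$ together with $J_{\lambda,C}(x)=x$ forces $\mathrm{Var}_x(X_{\hat\tau})=0$, hence $X_{\hat\tau}$ is a.s.\ a constant level $c<x$, contradicting $\psi_{\lambda,C}(x)=x$. Your route via Lemma~\ref{main-lemma2} --- $\psi_{\lambda,C}=\mathrm{id}$ on the open set $\{\lambda>0\}\cap C$, hence $\mu x=A_X\psi_{\lambda,C}(x)=\lambda(x)(\psi_{\lambda,C}(x)-x)=0$ --- is shorter and avoids the probabilistic detour.

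The gap is precisely where you flag it: the structural step for $C$. You write ``I would use this over-determination\ldots to rule out bounded components'' but never do so, and your final paragraph openly concedes this is the obstacle. The paper fills this gap concretely (its Case~4): for a bounded component $(c,d)$ with $0<c<d<\infty$ one has, on $(c,d)$,
\[
M(x):=J_{\hat\tau}(x)-x=c_5x^{1-\xi}+c_6+c_7x^{2-2\xi}-x,
\]
so $M''(x)=x^{-2\xi}(c_{10}x^{\xi-1}+c_{11})$ has at most one zero in $(c,d)$. Smooth fit (Theorem~\ref{smoothfit-thm}) forces $M'(c)=M'(d)=0$; together with the single sign change of $M''$ this makes $M$ strictly monotone on $(c,d)$, which is incompatible with $M(c)=M(d)=0$. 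This is the missing computation you need. Note also that the paper's logic is \emph{component-wise}: any component of a nonempty proper open $C$ is of one of the three types $(0,c)$, $(c,d)$, $(c,\infty)$, and each type yields a contradiction. Framed this way there is no separate ``intervals accumulating at $0$'' case to worry about --- that concern dissolves once you argue per component rather than trying to pin down the full shape of $C$ first.
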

\begin{figure}[ht]
	\begin{center}
	\begin{tikzpicture} 
	\begin{axis} 
		  \addplot[style=dashed,domain=0:0.5] {x};  
			\addplot[style=solid,domain=0:0.4105572]{
x^(1-0.3111111111)*(0.41055718^(0.3111111111)-1.1*0.41055718^(1+0.3111111111))+ 1.1*0.41055718^(2*0.3111111111)*x^(2-2*0.3111111111)
						};  
      \addplot[style=solid,domain=0.4105572:0.5]{x};
	\end{axis} 
	\end{tikzpicture} 
		\end{center}
			\caption{The equilibrium value function $x\mapsto J_{\hat \tau}(x)$ (solid) and $x\mapsto f(x)+g(x) = x$ (dashed) in the GBM case with parameters $\mu=0.07, \sigma^2=0.45$  and $\gamma=1.1$ (in this case $b \approx 0.4106$).} 
\end{figure}
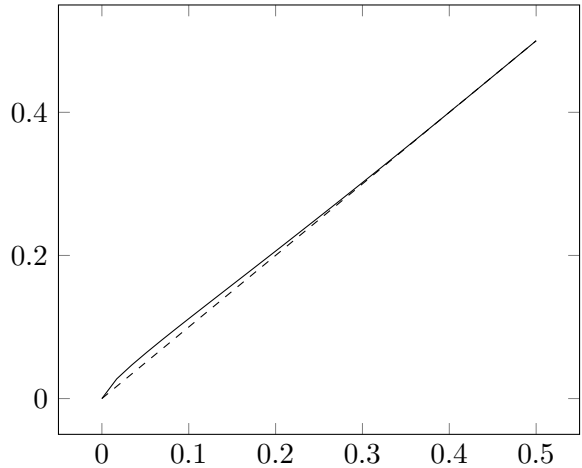
\begin{rem} \label{mean-var-rem} If $\mu\leq0$ then $X$ is a supermartingale (with a last element) and it follows directly from $J_{\tau}(x) = \E_x(X_\tau) - \gamma \mbox{Var}_x(X_\tau)$, Definition \ref{def:equ_stop_time} and the optional sampling theorem that it is an equilibrium strategy to always stop immediately. 
If $\mu\geq \frac{\sigma^2}{2}$ then $\tau^b:=\inf\{t\geq0: X_t \geq b\}<\infty$ a.s. for any initial state $x \leq b$ for each $b\in \state$ and $J_{\tau^b}(x) = \E_x(X_{\tau^b}) - \gamma\left(\E_x(X_{\tau^b}^2)-\E_x^2(X_{\tau^b})\right) = b - \gamma(b^2-b^2) = b$ can thus become arbitrarily large. 
\end{rem}

\begin{rem} A mean-variance optimal stopping problem for a GBM is studied in \cite{pedersen2016optimal}. There it is shown that the stopping time $\hat \tau$ in Theorem \ref{mean-varthm} is \emph{dynamically optimal} when $\mu \in  (0,\sigma^2/2)$, see \cite[Theorem 3]{pedersen2016optimal}. It is also argued that this stopping time is a subgame perfect Nash equilibrium when $\mu \in (0, \sigma^2/4]$, see \cite[Sec. 4]{pedersen2016optimal}, which is in line with our findings in Theorem \ref{mean-varthm}. 
\end{rem}
\begin{proof}  
We remark that it follows from the calculations below that $\hat \tau$ is admissible. A stopping time is, according to Theorem \ref{main-thm-imm-stp}, an equilibrium stopping time if and only if conditions \eqref{mainthmcond1}---\eqref{mainthmcond5} are satisfied. Note that we do not have to check \eqref{mainthmcond3} since $\hat\tau$ has no Cox process component, which corresponds to $\lambda(x)=0$ for each $x$. Recall that if \eqref{mean-vareq3} is satisfied then \eqref{mainthmcond1} and \eqref{mainthmcond4} are also satisfied. Note that \eqref{mainthmcond2} can in this case be written as
\begin{align}
A_Xf(x) + g'(h(x))A_X h(x)  
= x(-\gamma\sigma^2x + \mu) 
\leq 0, \enskip \mbox{ for } x\in \mbox{int} (C^c).\label{3.5.meanvar}
\end{align}
It follows that if we can verify \eqref{mean-vareq3}, \eqref{3.5.meanvar} and \eqref{mainthmcond5} for $\hat\tau$ then we are done. Let us now consider the candidate equilibrium stopping time $\tau^b:=\inf\{t\geq0: X_t \geq b\}$ and use the smooth fit condition to see that necessarily $b=\frac{\xi}{\gamma(1-\xi)}$. Recall, from standard theory, that for any $b$, 
\begin{align}
\mathbb P_x(\tau^b<\infty) = b^{\xi-1}x^{1-\xi},  \enskip \mbox{ for } x \leq b. \label{prob-tau-b}
\end{align}
Since $X_t\rightarrow 0$ a.s. as $t\rightarrow \infty$, it hence holds, for any $x\leq b$, that
\begin{align}
\psi_{\tau^b}(x)&= \E_x(X_{\tau^b})\\
&= \E_x(X_{\tau^b}I_{\{\tau^b<\infty\}}) + \E_x(X_{\tau^b}I_{\{\tau^b=\infty\}}) \\
& = b^\xi x^{1-\xi}.
\end{align}
Similarly, $\E_x(X^2_{\tau^b}) = b^{1+\xi} x^{1-\xi}.$
Hence, for $x\leq b$, 
\begin{align}
J_{\tau^b}(x) & =\E_x(X_{\tau^b}) - \gamma \left(\E_x(X_\tau^2)-\E_x^2(X_\tau)\right)\\
& = x^{1-\xi}(b^\xi-\gamma b^{1+\xi}) + \gamma b^{2\xi} x^{2-2\xi}.
\end{align}
It is easy to verify that $J_{\tau^b}(b) = b$, for any $b$, and hence the function
\[J_{\tau^b}(x)=\begin{cases}  
	x,&\;x \geq b,\\
x^{1-\xi}(b^\xi-\gamma b^{1+\xi}) + \gamma b^{2\xi} x^{2-2\xi},&\; x<b,
	\end{cases}  
	\]
	is continuous. Note that
\[J'_{\tau^b}(x)=\begin{cases}  
	1,&\;x > b,\\
(1-\xi)x^{-\xi}(b^\xi-\gamma b^{1+\xi}) + (2-2\xi)\gamma b^{2\xi} x^{1-2\xi},&\; x<b,
	\end{cases}  
	\]
where the lower part is, for $x=b$, equal to:
\begin{align}
(1-\xi)b^{-\xi}(b^\xi-\gamma b^{1+\xi}) + (2-2\xi)\gamma b^{2\xi} b^{1-2\xi}&= (1-\xi)(1-\gamma b) + 2(1-\xi)\gamma b\\
&= (1-\xi)(1+ \gamma b).
\end{align}
In order for the smooth fit condition (Theorem \ref{smoothfit-thm}) to be satisfied we need that $J'_{\tau^b}(b)$ is equal to $f'(x)+g'(h(x))h'(x)=1$. We thus need that
$(1-\xi)(1+ \gamma b)=1$. Hence, the only possible $b$ is given by
\begin{align}
b = \frac{1}{\gamma} \left(\frac{1}{(1-\xi)}-1\right) =\frac{\xi}{\gamma(1-\xi)}.
\end{align}
It is easily verified that \eqref{3.5.meanvar} holds when $b=\frac{\xi}{\gamma(1-\xi)}$, using that $\mu \in  (0,\sigma^2/4]$  (i.e. $\xi \in (0,1/2]$). From the explicit form of $J_{\tau^b}(x)$ above it follows that \eqref{mean-vareq3} is satisfied exactly when
\begin{align} \label{mea-varh22}
x^{1-\xi}(b^\xi-\gamma b^{1+\xi}) + \gamma b^{2\xi} x^{2-2\xi} -x\geq  0, \enskip \mbox{ for } x < b.
\end{align}
It is straightforward to show that this inequality is satisfied,  using that $\xi \in (0,1/2]$ and $b=\frac{\xi}{\gamma(1-\xi)}$, and thereby verifying \eqref{mean-vareq3}. The only thing we have left is to verify \eqref{mainthmcond5}, which we will do using Theorem \ref{sufficient-cond-thm}. From the calculations above follows that
\begin{align}
\phi_{\tau^b}(x)&=\E_x(-\gamma X^2_{\tau^b}) = \begin{cases} 
	-\gamma x^2 ,&\;x\geq b,\\
-\gamma b^{1+\xi} x^{1-\xi},&\; x<b,
	\end{cases}\\
	\psi_{\tau^b}(x)&=\E_x(X_{\tau^b})=\begin{cases} 
	x ,&\;x \geq b,\\
b^\xi x^{1-\xi},&\; x<b.
	\end{cases} 
\end{align}	
Let us drop the subscript $\tau^b$. It follows that
\begin{align}
\phi'(x) &= \begin{cases} 
 -2\gamma x,				&\;x> b,\\
 -(1-\xi)\gamma b^{1+\xi} x^{-\xi}, &\; x< b,\\
		\end{cases}\\
\phi''(x) &= \begin{cases} 
 -2\gamma ,				&\;x> b,\\
	\xi(1-\xi)\gamma b^{1+\xi} x^{-\xi-1} ,														 			&\; x< b.\\
		\end{cases}
	\end{align}
Thus, 
\begin{align} A_X\phi(x) & = \mu x\phi'(x) + \frac{1}{2}\sigma^2x^2\phi''(x)\\
& = \begin{cases} 
-2\mu\gamma x^2	- \frac{1}{2}\sigma^2x^2			2\gamma						, &\;x> b,\\
	-\mu x (1-\xi)\gamma b^{1+\xi} x^{-\xi} + \frac{1}{2}\sigma^2x^2 \xi(1-\xi)\gamma b^{1+\xi} x^{-\xi-1}	 , &\; x< b,\\
	\end{cases}\\ 
	& = \begin{cases} 
-2\gamma x^2(\mu + \frac{1}{2}\sigma^2) 						, &\;x> b,\\
0, &\; x< b,\\
	\end{cases} 
	\end{align}
where we in the last equality used that $\xi = 2\mu/\sigma^2$. 
Similarly, 
\begin{align}\psi'(x) & = \begin{cases} 
 1,				&\;x> b,\\
(1-\xi)b^\xi x^{-\xi}	 ,														 			&\; x< b,\\
		\end{cases}
	\\
	\psi''(x) & = \begin{cases} 
	0 ,				&\;x> b,\\
-\xi(1-\xi)b^\xi x^{-\xi-1} ,														 			&\; x< b,\\
		\end{cases}\\
A_X\psi(x) & = 
	\begin{cases} 
 \mu x   , &\;x> b,\\
 0, &\; x< b.\\
	\end{cases} 
	\end{align}	
Note that $g'(\psi(b)) = 1+ 2\gamma\psi(b) = 1+ 2\gamma b$. Thus, 
\begin{align} 
A_X\phi(x) +  g'(\psi(b))A_X\psi(x)
= \begin{cases} 
 x\{\mu+ 2\gamma \mu(b-x) -\gamma \sigma^2x \}  , &\;x> b,\\
 0, &\; x< b.\\
	\end{cases} \label{meanvarxtxtx}
	\end{align} 		
It is easily checked that $\phi$ and $\psi$ are a $\mathcal{C}^2$ everywhere except at $x = b$ and 
that $\phi(\cdot) + g'(\psi(b)) \psi(\cdot)$ is $\mathcal{C}^1$ everywhere.  Hence, we may use Theorem \ref{sufficient-cond-thm}. Let us verify that  \eqref{sufficient-cond-thm:1} holds:

Trivially, $g''(b) = 2\gamma$. For the GBM it holds that $\sigma^2(x) = x^2 \sigma^2$. Moreover, 
$\frac{\xi^2}{2}\sigma^2 = \xi\mu$, $\gamma b = \frac{\xi}{1-\xi}$ and $-1+\sigma^2\xi/\mu=1$. Using these findings, including \eqref{meanvarxtxtx}, we obtain 
\begin{align} 
&  A_X\phi(b+) + g'( \psi(b))A_X\psi(b+)  +  A_X\phi(b-) + g'( \psi(b))A_X\psi(b-) \\
& \enskip\enskip + g''( \psi(b))\left(\frac{\psi'(b+) - \psi'(b-)}{2}\right)^2 \sigma^2(b)\\
&\enskip= b \left(\mu-\gamma \sigma^2 b - \gamma b\frac{\xi^2}{2}\sigma^2\right)\\
&\enskip = -b\mu\left(\frac{\xi-1 + \sigma^2\xi/\mu - \xi^2}{1-\xi}\right)\\
&\enskip= -b\mu\frac{1+\xi-\xi^2}{1-\xi} \leq 0,
\end{align}
where the inequality follows from $\xi \in (0,1/2]$. This means that \eqref{sufficient-cond-thm:1} holds, which, by Theorem \ref{sufficient-cond-thm}, implies that condition \eqref{mainthmcond5} holds and the first statement of the theorem follows. 

Let  us consider $\mu\in(\sigma^2/4,\sigma^2/2)$. From \eqref{mean-vareq3} and the calculations in \eqref{mean-var:nomix} follows that an equilibrium must satisfy either 
$\lambda(x)=0$ 
or $\psi_{\lambda,C}(x)=x$ and $J_{\lambda,C}(x)=x$, for each $x\in C$.
But if $\psi_{\lambda,C}(x)=\E_x(X_\tau)=x$ and $J_{\lambda,C}(x) = \E_x(X_\tau) - \gamma \mbox{Var}_x(X_\tau)=x$ then $\mbox{Var}_x(X_\tau)=0$. Now, the only way $\mbox{Var}_x(X_\tau)=0$ holds for $x\in C$ is that $\tau$ is the threshold time for some constant 
$0<c<x$, by basic properties of the GBM in the present case. But this implies $J_{\lambda,C}(x) = \E_x(X_\tau) - \gamma \mbox{Var}_x(X_\tau)=c<x$ which violates \eqref{mean-vareq3}. Hence, for equilibria it must hold for each $x\in C$ that $\lambda(x)=0$. 
To prove the second statement of the theorem it is thus enough to prove that $\tau^C$ cannot be an equilibrium stopping time for an arbitrary continuation set $C$. Since $C$ is open, cf. Definition \ref{def:mix_strat}, follows that $C$ must be described by either of the cases below. 
We conclude the proof by showing that none of these cases allow for equilibria.

\emph{Case 1: $C=\emptyset$.} In this case there exists an $x \in \mbox{int} (C^c)$ such that the inequality in \eqref{3.5.meanvar} does not hold, cf. $\mu,\gamma,\sigma^2>0$. Hence, $C=\emptyset$ is not an equilibrium.

\emph{Case 2: $C=(0,\infty)$.} In this case $J_C(x)=0$ and hence \eqref{mean-vareq3} does not hold. Hence, $C=(0,\infty)$ is not an equilibrium.

\emph{Case 3: $(0,c)\subset C$ for some constant $0<c<\infty, c\in \partial C$.} 
This corresponds to the type of stopping time investigated in the first part of this proof. Using the arguments before and after \eqref{mea-varh22} we find that for a stopping time of this type it is required that $\mu\in (0,\sigma^2/4]$ in order for \eqref{mean-vareq3} to hold. Hence, no equilibrium exists for this case.

\emph{Case 4: $(c,d)\subset C$ for some constants $0<c<d< \infty, c,d \in \partial C$.} Consider an $x \in (c,d)$. From basic properties of the GBM follows,
\begin{align}
\mathbb P_x(\tau^{c,d}=d)  = \frac{x^{1-\xi}-c^{1-\xi}}{d^{1-\xi}-c^{1-\xi}},  \enskip x \in [c,d]. \label{prob-tau-c-d}
\end{align}
Simple calculations give,
\[\phi_{\tau^{c,d}}(x)=-\gamma \E_x(X^2_{\tau^{c,d}}) = \begin{cases} 
-\gamma \frac{(d^2-c^2)x^{1-\xi}+ d^{1-\xi}c^2 -c^{1-\xi}d^2 }{d^{1-\xi}-c^{1-\xi}} ,&\; x \in (c,d)\\
	-\gamma x^2 ,&\; x \in \{c,d\},
	\end{cases}\]
\[\psi_{\tau^{c,d}}(x)= \E_x(X_{\tau^{c,d}}) = \begin{cases} 
\frac{(d-c)x^{1-\xi}+ d^{1-\xi}c -c^{1-\xi}d }{d^{1-\xi}-c^{1-\xi}} ,&\; x \in (c,d)\\
x  ,&\; x \in \{c,d\}.
	\end{cases}\]
For $x\in (c,d)$ we can thus determine constants $c_i$, such that,
\begin{align} 
\psi_{\tau^{c,d}}(x) = c_1x^{1-\xi} + c_2, \enskip \phi_{\tau^{c,d}}(x) = c_3x^{1-\xi} + c_4.
\end{align}
Using the same notation we find for the function $M(x):= J_{{c,d}}(x)-x$ and $x\in (c,d)$ that,
\begin{align}
M(x) & =\phi_{\tau^{c,d}}(x) + \psi_{\tau^{c,d}}(x) + \gamma \psi_{\tau^{c,d}}^2(x)-x\\
& = c_5x^{1-\xi} + c_6+ c_7x^{2-2\xi}-x,\\
M'(x) &= c_8x^{-\xi} + c_9x^{1-2\xi}-1,\\
M''(x) &= x^{-2\xi}(c_{10}x^{\xi-1} +c_{11}).
\end{align}
Now suppose $\tau^{c,d}$ is an equilibrium stopping time, then, by smooth fit, $M'(c)=M'(d)=0$ 
(sufficient differentiability around the points $c$ and $d$ for the use of Theorem \ref{smoothfit-thm} is easily seen to be fulfilled).
Moreover, it is directly seen that $M''(x)$ has at most one zero in $(c,d)$; and that in order for $M'(c)=M'(d)=0$ to be true it must indeed have a zero in $(c,d)$. From this follows that $M$ is strictly increasing or strictly decreasing on $(c,d)$ and hence that $M(c)$ and $M(d)$ cannot both be equal to zero, which, by definition of $M$, means that $\tau^{c,d}$ cannot be an equilibrium stopping time, and we have reached a contradiction. Hence, no equilibrium exists for this case.

\emph{Case 5: $(c,\infty)\subset C$ for some constant $0<c \in \partial C$.} In this case it is easy to see that 
either $C^c = (0,c]$ or $C$ contains a bounded interval. For the first alternative follows directly that \eqref{3.5.meanvar} is violated. The second alternative is covered by Case 4. Hence, no equilibrium exists for this case. 
\end{proof}

\subsection{An example with two equilibria}\label{subsec:example-2-equi}
Here we present an example with two different equilibrium value functions, implying unique equilibria cannot generally be expected. 
Suppose $X$ is a Wiener process. Let 
	$f(x)= \frac{x^6}{9} - \frac{x^4}{3}$, 
	$g(x)=  x^2 - \frac{5x^3}{9}$ and 
	$h(x)= x^2$.  
Define the stopping time $\hat\tau = 0$, which corresponds to $C=\emptyset$ and any intensity function $\lambda$. Condition \eqref{mainthmcond2} of Theorem \ref{main-thm-imm-stp} is then directly verified and the other conditions are in this case irrelevant. Hence, $\hat\tau = 0$ is an equilibrium stopping time and the corresponding equilibrium value function is 
$\hat J(x) = f(x) + g(h(x))$. 
Now define $\tilde\tau$ as the stopping time corresponding to $C=(-1,1)$ and $\lambda = 0$. With obvious notation and basic properties of the Wiener process we find, 
\begin{align} \label{some-obs-in-ex}
\tilde\phi(x) =\frac{-2}{9}, \enskip \tilde\psi(x)  = 1, \enskip \tilde J(x) = \frac{2}{9}, \mbox{ for } x \in C.
\end{align}
Using these observations and tedious calculations it can be verified that  
\eqref{mainthmcond1} and \eqref{mainthmcond4} hold, whereas \eqref{mainthmcond2} is verified as above and \eqref{mainthmcond3} is irrelevant. Condition \eqref{mainthmcond5} can be verified by explicit calculation of $a(x,h)$, for $x=-1,1$; where the main observation is that e.g.
$\E_1\left(\tilde\phi(X_{\tau_h})\right) = \frac{1}{2}\frac{-2}{9} + \frac{1}{2}f(1+h)$, which is seen using \eqref{some-obs-in-ex} and that $X$ is a Wiener process.

\section{Appendix}
\begin{lem} \label{main-lemma2} For any $\tau^{\lambda,C} \in \mathcal{N}$ and $x\in C$,
\begin{align}
A_X \phi_{\lambda,C}(x) 
& = \lim_{h\searrow 0}\frac{\phi_{{\tau^{\lambda,C}}\circ \theta_{\tau_h}+\tau_h}(x)-\phi_{\lambda,C}(x)}{\E_x(\tau_h)} \\
& = \lambda(x)(\phi_{\lambda,C}(x)-f(x)). \label{main-lemma2:eq} 
\end{align}
\end{lem}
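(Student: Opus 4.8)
The plan is to read off the first equality from the definition of the characteristic operator $A_X$ together with the strong Markov property, and to obtain the second equality by specializing Lemma~\ref{main-lemma}.

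First I would fix $x\in C$ and use that $C$ is open (by definition of $\mathcal{N}$) to pick $\bar h>0$ with $\tau_h<\tau^C$ a.s.\ for $0<h\le\bar h$; in particular $\{\tau^{\lambda,C}>\tau_h\}=\{\tau^\lambda>\tau_h\}\in\mathcal{F}_{\tau_h}$ for such $h$. Using the assumed time-shift identity $X_{\tau^{\lambda,C}\circ\theta_{\tau_h}+\tau_h}=X_{\tau^{\lambda,C}}\circ\theta_{\tau_h}$ and conditioning on $\mathcal{F}_{\tau_h}$ via the strong Markov property gives
\[
\phi_{\tau^{\lambda,C}\circ\theta_{\tau_h}+\tau_h}(x)=\E_x\!\left(f\!\left(X_{\tau^{\lambda,C}}\circ\theta_{\tau_h}\right)\right)=\E_x\!\left(\phi_{\lambda,C}(X_{\tau_h})\right).
\]
Hence the quotient in the middle of the claim equals $\bigl(\E_x(\phi_{\lambda,C}(X_{\tau_h}))-\phi_{\lambda,C}(x)\bigr)/\E_x(\tau_h)$, whose $h\searrow0$ limit is, by the very definition of $A_X$, equal to $A_X\phi_{\lambda,C}(x)$ --- as soon as the limit is known to exist. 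Thus the first equality will be a consequence of the second.

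For the second equality I would invoke Lemma~\ref{main-lemma} with the generic pair $(\tau^{\lambda,C},\tau^{\eta,D})$ there both instantiated as our $\tau^{\lambda,C}$, i.e.\ with $\eta=\lambda$ and $D=C$ (admissible since $x\in C$), which yields
\[
\lim_{h\searrow0}\frac{\phi_{\tau^{\lambda,C}\circ\theta_{\tau_h}+\tau_h}(x)-\phi_{\tau^{\lambda,C}\diamond\tau^{\lambda,C}(h)}(x)}{\E_x(\tau_h)}=\lambda(x)\bigl(\phi_{\lambda,C}(x)-f(x)\bigr).
\]
It then remains to check $\phi_{\tau^{\lambda,C}\diamond\tau^{\lambda,C}(h)}(x)=\phi_{\lambda,C}(x)$ for $0<h\le\bar h$: on $\{\tau^\lambda\le\tau_h\}$ the stopping times $\tau^{\lambda,C}\diamond\tau^{\lambda,C}(h)$ and $\tau^{\lambda,C}$ coincide (both equal $\tau^\lambda$), while on the $\mathcal{F}_{\tau_h}$-event $\{\tau^\lambda>\tau_h\}$ the conditional expectation $\E_x(\cdot\mid\mathcal{F}_{\tau_h})$ of $f$ at the relevant stopping instant equals $\phi_{\lambda,C}(X_{\tau_h})$ in both cases --- for the restarted strategy by the strong Markov property, and for $\tau^{\lambda,C}$ itself by the strong Markov property together with the memorylessness of $N^\lambda$ conditionally on the filtration generated by $X$ (given no jump on $[0,\tau_h]$, the residual of $N^\lambda$ is again a Cox process with intensity $\lambda(X_\cdot)$). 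Combining the two displays closes the chain of equalities and, incidentally, shows the limit exists.

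I expect the main obstacle to be precisely this identity $\phi_{\tau^{\lambda,C}\diamond\tau^{\lambda,C}(h)}(x)=\phi_{\lambda,C}(x)$ --- the memorylessness argument that restarting the equilibrium strategy after a stopping-free window reproduces it in the sense of $\phi$-value; the remainder is routine use of the strong Markov property and of the definition of $A_X$. An alternative route that avoids Lemma~\ref{main-lemma} is to expand $\phi_{\lambda,C}(x)=\E_x\bigl(f(X_{\tau^{\lambda,C}})\bigr)$ over $[0,\tau_h]$ as the contribution of a jump of $N^\lambda$ before $\tau_h$ plus $\phi_{\lambda,C}(X_{\tau_h})$ on the complement, and then estimate $\bigl(\E_x(\phi_{\lambda,C}(X_{\tau_h}))-\phi_{\lambda,C}(x)\bigr)/\E_x(\tau_h)$ just as in the proof of Lemma~\ref{main-lemma}, using continuity of $f,\lambda,\phi_{\lambda,C}$ and of the paths of $X$, the bound $1-e^{-y}\le y$ for $y\ge0$, and the relation $\E_x(\tau_h^2)/\E_x(\tau_h)\to0$ from Lemma~\ref{lemma-tau-h-oct}.
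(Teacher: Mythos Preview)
Your proposal is correct and follows essentially the same route as the paper: both specialize Lemma~\ref{main-lemma} with $(\eta,D)=(\lambda,C)$, verify $\phi_{\tau^{\lambda,C}\diamond\tau^{\lambda,C}(h)}(x)=\phi_{\lambda,C}(x)$ via the strong Markov property (and memorylessness of $N^\lambda$), and obtain the first equality from $\phi_{\tau^{\lambda,C}\circ\theta_{\tau_h}+\tau_h}(x)=\E_x(\phi_{\lambda,C}(X_{\tau_h}))$ together with the definition of $A_X$. The paper's write-up of the identity $\phi_{\tau^{\lambda,C}\diamond\tau^{\lambda,C}(h)}(x)=\phi_{\lambda,C}(x)$ is slightly more compact---it simply recombines the two indicator pieces into $\E_x(f(X_{\tau^{\lambda,C}}))$---but the underlying reasoning is the same as yours.
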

\begin{proof} Using arguments similar to those we used to arrive at \eqref{dev-strat-prop} and the strong Markov property  we obtain
\begin{align*}
\phi_{\tau^{\lambda,C} \diamond \tau^{\lambda,C}(h)}(x) 
&= \E_x\left(f\left(X_{\tau^{\lambda,C} \diamond \tau^{\lambda,C}(h)}\right)\right)\\
&= \E_x\left(I_{\{\tau^\lambda \leq \tau_h\}}f\left(X_{\tau^{\lambda,C}}\right) +  I_{\{\tau^\lambda > \tau_h\}}f\left(X_{{\tau^{\lambda,C}}\circ \theta_{\tau_h}+\tau_h}\right)\right)\\
& = \E_x\left(f(X_{\tau^{\lambda,C}})\right) \\ 
&= \phi_{\lambda,C}(x),
\end{align*}
for $0<h\leq \bar{h}$, for some $\bar{h}>0$. 
This implies that the second equality in \eqref{main-lemma2:eq} follows from Lemma \ref{main-lemma}. Now use the strong Markov property to see that
\begin{align}
\phi_{{\tau^{\lambda,C}}\circ \theta_{\tau_h}+\tau_h}(x) 
& = \E_x\left(f\left(X_{{\tau^{\lambda,C}}\circ \theta_{\tau_h}+\tau_h}\right)\right)\\
&= \E_x\left(\E_x\left(f\left(X_{{\tau^{\lambda,C}}\circ \theta_{\tau_h}+\tau_h}\right)|\mathcal{F}_{\tau_h}\right) \right)\\
&= \E_x\left(\phi_{\lambda,C}(X_{\tau_h})  \right).
\end{align}
Hence, the first equality in \eqref{main-lemma2:eq} follows from the definition of the characteristic operator $A_X$. 
\end{proof}
\begin{lem} \label{prop-calc-eq-cond1} For any $\tau^{\lambda,C},\tau^{\eta,D} \in \mathcal{N}$ and $x \in C \cap D$,
\begin{align}
& \lim_{h\searrow 0}\frac{J_{\lambda,C}(x)-J_{\tau^{\lambda,C}\diamond \tau^{\eta,D}(h)}(x)}{\E_x(\tau_h)}\\ 
& \quad = (\lambda(x)-\eta(x))\{f(x)-\phi_{\lambda,C}(x)+ g'(\psi_{\lambda,C}(x))\left(h(x)-\psi_{\lambda,C}(x)\right)\}.
\end{align}
\end{lem}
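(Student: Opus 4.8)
The plan is to split the numerator along the decomposition $J_{\lambda,C} = \phi_{\lambda,C} + g(\psi_{\lambda,C})$. Fix $x \in C \cap D$; since $D$ is open there is $\bar h>0$ with $\tau_h < \tau^D$ a.s.\ for $0<h\le\bar h$, so $\tau^{\lambda,C}\diamond\tau^{\eta,D}(h)$ reduces to $I_{\{\tau^\eta\le\tau_h\}}\tau^\eta + I_{\{\tau^\eta>\tau_h\}}(\tau^{\lambda,C}\circ\theta_{\tau_h}+\tau_h)$, exactly as in the proof of Lemma~\ref{main-lemma}. For such $h$,
\begin{align*}
J_{\lambda,C}(x) - J_{\tau^{\lambda,C}\diamond\tau^{\eta,D}(h)}(x)
&= \bigl(\phi_{\lambda,C}(x) - \phi_{\tau^{\lambda,C}\diamond\tau^{\eta,D}(h)}(x)\bigr)\\
&\quad + \bigl(g(\psi_{\lambda,C}(x)) - g(\psi_{\tau^{\lambda,C}\diamond\tau^{\eta,D}(h)}(x))\bigr),
\end{align*}
and I would divide by $\E_x(\tau_h)$ and take $h\searrow0$ in the two brackets separately.

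For the first bracket, write $\phi_{\lambda,C}(x) - \phi_{\tau^{\lambda,C}\diamond\tau^{\eta,D}(h)}(x)$ as $\bigl(\phi_{\tau^{\lambda,C}\circ\theta_{\tau_h}+\tau_h}(x) - \phi_{\tau^{\lambda,C}\diamond\tau^{\eta,D}(h)}(x)\bigr) - \bigl(\phi_{\tau^{\lambda,C}\circ\theta_{\tau_h}+\tau_h}(x) - \phi_{\lambda,C}(x)\bigr)$; dividing by $\E_x(\tau_h)$, Lemma~\ref{main-lemma} controls the first difference and Lemma~\ref{main-lemma2} the second, so the limit of the first bracket is $\eta(x)(\phi_{\lambda,C}(x)-f(x)) - \lambda(x)(\phi_{\lambda,C}(x)-f(x)) = (\lambda(x)-\eta(x))(f(x)-\phi_{\lambda,C}(x))$. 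Since Lemmas~\ref{main-lemma} and~\ref{main-lemma2} hold verbatim with $(\psi,h)$ in place of $(\phi,f)$ — as already used in the proof of Theorem~\ref{nec-cond-THM-for-C} — the same manipulation gives
\begin{align*}
\lim_{h\searrow0}\frac{\psi_{\lambda,C}(x) - \psi_{\tau^{\lambda,C}\diamond\tau^{\eta,D}(h)}(x)}{\E_x(\tau_h)} = (\lambda(x)-\eta(x))(h(x)-\psi_{\lambda,C}(x)).
\end{align*}
I would also record that $\psi_{\tau^{\lambda,C}\diamond\tau^{\eta,D}(h)}(x)\to\psi_{\lambda,C}(x)$ as $h\searrow0$: by the $\psi$-analogue of \eqref{proof-ref} the difference $\psi_{\tau^{\lambda,C}\circ\theta_{\tau_h}+\tau_h}(x) - \psi_{\tau^{\lambda,C}\diamond\tau^{\eta,D}(h)}(x)$ is an expectation of an integral over $[0,\tau_h]$ of a locally bounded integrand, hence vanishes as $\tau_h\to0$, while $\psi_{\tau^{\lambda,C}\circ\theta_{\tau_h}+\tau_h}(x) = \E_x(\psi_{\lambda,C}(X_{\tau_h}))\to\psi_{\lambda,C}(x)$ by continuity of $\psi_{\lambda,C}$ (admissibility) and of the paths of $X$.

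For the second bracket I would Taylor-expand $g$ about $\psi_{\lambda,C}(x)$. With $\delta_h := \psi_{\lambda,C}(x) - \psi_{\tau^{\lambda,C}\diamond\tau^{\eta,D}(h)}(x)$, boundedness of $g''$ near $\psi_{\lambda,C}(x)$ (recall $g\in\mathcal{C}^3$) gives $g(\psi_{\lambda,C}(x)) - g(\psi_{\tau^{\lambda,C}\diamond\tau^{\eta,D}(h)}(x)) = g'(\psi_{\lambda,C}(x))\,\delta_h + R_h$ with $|R_h|\le c\,\delta_h^2$ for all small $h$; since $\delta_h/\E_x(\tau_h)$ converges to the finite limit $(\lambda(x)-\eta(x))(h(x)-\psi_{\lambda,C}(x))$ and $\delta_h\to0$, one gets $R_h/\E_x(\tau_h)\to0$, so the second bracket contributes $g'(\psi_{\lambda,C}(x))(\lambda(x)-\eta(x))(h(x)-\psi_{\lambda,C}(x))$ in the limit. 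Adding the two contributions yields $(\lambda(x)-\eta(x))\{f(x)-\phi_{\lambda,C}(x)+g'(\psi_{\lambda,C}(x))(h(x)-\psi_{\lambda,C}(x))\}$, and in particular the limit exists. The only mildly delicate points are the remainder bound for $g$ and the convergence $\psi_{\tau^{\lambda,C}\diamond\tau^{\eta,D}(h)}(x)\to\psi_{\lambda,C}(x)$; neither requires an idea beyond the continuity built into admissibility and the integral representation already obtained in the proof of Lemma~\ref{main-lemma}.
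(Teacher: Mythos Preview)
Your proof is correct and follows essentially the same route as the paper: both arguments insert the intermediate stopping time $\tau^{\lambda,C}\circ\theta_{\tau_h}+\tau_h$, invoke Lemma~\ref{main-lemma} and Lemma~\ref{main-lemma2} (and their $\psi$-analogues) to handle the resulting differences, and then Taylor-expand $g$ to pass from $\psi$ to $g(\psi)$. The only cosmetic difference is that you expand $g$ about the fixed point $\psi_{\lambda,C}(x)$ (which then requires the observation $\delta_h\to 0$ that you supply), whereas the paper expands about the moving point $\psi_{\tau^{\lambda,C}\circ\theta_{\tau_h}+\tau_h}(x)$; either choice works.
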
 
\begin{proof}
Use the same argument as in the proof of Lemma \ref{main-lemma2} to obtain
\begin{align}
& J_{\tau^{\lambda,C}}(x) -  J_{\tau^{\lambda,C}\diamond \tau^{\eta,D}(h)}(x) \\
& = J_{{\tau^{\lambda,C}\diamond \tau^{\lambda,C}(h)}}(x)-J_{{\tau^{\lambda,C}}\circ \theta_{\tau_h}+\tau_h}(x)
- (J_{{\tau^{\lambda,C}\diamond \tau^{\eta,D}(h)}}(x)-J_{{\tau^{\lambda,C}}\circ \theta_{\tau_h}+\tau_h}(x))
\label{prop-calc-eq-cond1-1}.
\end{align}
The second part of \eqref{prop-calc-eq-cond1-1} can, by definition, be written as
\begin{align}
& J_{{\tau^{\lambda,C}\diamond \tau^{\eta,D}(h)}}(x)-J_{{\tau^{\lambda,C}}\circ \theta_{\tau_h}+\tau_h}(x)\\  
& \enskip = \phi_{{\tau^{\lambda,C}\diamond \tau^{\eta,D}(h)}}(x) - \phi_{{\tau^{\lambda,C}}\circ \theta_{\tau_h}+\tau_h}(x) + g(\psi_{{\tau^{\lambda,C}\diamond \tau^{\eta,D}(h)}}(x))
- g(\psi_{{\tau^{\lambda,C}}\circ \theta_{\tau_h}+\tau_h}(x)).
\end{align}
From Lemma \ref{main-lemma}  it follows that
\begin{align*}
\lim_{h\searrow 0}
\frac{\phi_{{\tau^{\lambda,C}\diamond \tau^{\eta,D}(h)}}(x) - \phi_{{\tau^{\lambda,C}}\circ \theta_{\tau_h}+\tau_h}(x)}{\E_x(\tau_h)} 
=\eta(x)(f(x)-\phi_{\lambda,C}(x)).
\end{align*}
Use the same arguments as for \eqref{proof-ref} to obtain (here $\eta_t := \eta(X_t)$)
\begin{align*}
& \psi_{\tau^{\lambda,C} \diamond \tau^{\eta,D}(h)}(x)  \\
& = \psi_{{\tau^{\lambda,C}}\circ \theta_{\tau_h}+\tau_h}(x) +
 \E_x\left(\int_0^{\tau_h}\eta_t e^{-\int_0^t\eta_s ds}\left(h(X_t)-\psi_{{\lambda,C}}\left(X_{\tau_h}\right)\right)dt\right).
\end{align*}
Using standard Taylor expansion we thus obtain
\begin{align*}
& g(\psi_{{\tau^{\lambda,C}\diamond \tau^{\eta,D}(h)}}(x))- g(\psi_{{\tau^{\lambda,C}}\circ \theta_{\tau_h}+\tau_h}(x)) \\
& = g\left(\psi_{{\tau^{\lambda,C}}\circ \theta_{\tau_h}+\tau_h}(x) +
 \E_x\left(\int_0^{\tau_h}{\eta_t} e^{-\int_0^t{\eta_s} ds}\left(h(X_t)
-\psi_{{\lambda,C}}\left(X_{\tau_h}\right)\right)dt\right)\right) \\
& \enskip - g(\psi_{{\tau^{\lambda,C}}\circ \theta_{\tau_h}+\tau_h}(x))\\
& =  g'\left(\psi_{{\lambda,C}\circ \theta_{\tau_h}+\tau_h}(x) \right)\E_x\left(\int_0^{\tau_h}{\eta_t} e^{-\int_0^t{\eta_s} ds}\left(
h(X_t)-\psi_{{\lambda,C}}\left(X_{\tau_h}\right)\right)dt\right) + o(\E_x(\tau_h)).
\end{align*}
Use the equality above and $\psi_{{\tau^{\lambda,C}}\circ \theta_{\tau_h}+\tau_h}(x)= \E_x\left(\psi_{\lambda,C}(X_{\tau_h})\right)$ to obtain  
\begin{align*}
\lim_{h\searrow 0}
\frac{g(\psi_{{\tau^{\lambda,C}\diamond \tau^{\eta,D}(h)}}(x))- g(\psi_{{\tau^{\lambda,C}}\circ \theta_{\tau_h}+\tau_h}(x))}{\E_x(\tau_h)} 
= g'(\psi_{{\lambda,C}}(x))\eta(x)(h(x)-\psi_{\tau^{\lambda,C}}(x)).
\end{align*} 
Putting the above together gives us that the limit for the second part of \eqref{prop-calc-eq-cond1-1} satisfies
\begin{align}
& \lim_{h\searrow 0}\frac{J_{{\tau^{\lambda,C}\diamond \tau^{\eta,D}(h)}}(x)-J_{{\tau^{\lambda,C}}\circ \theta_{\tau_h}+\tau_h}(x)}{\E_x(\tau_h)}\\ 
& \quad = \eta(x)\{f(x)-\phi_{\lambda,C}(x)+ g'(\psi_{\lambda,C}(x))\left(h(x)-\psi_{\lambda,C}(x)\right)\}.
\label{prop-calc-eq-cond1-2}
\end{align}
In the same way we obtain that the limit for the first part of \eqref{prop-calc-eq-cond1-1} satisfies
\begin{align}
& \lim_{h\searrow 0}\frac{J_{{\tau^{\lambda,C}\diamond \tau^{\lambda,C}(h)}}(x)-J_{{\tau^{\lambda,C}}\circ \theta_{\tau_h}+\tau_h}(x)}{\E_x(\tau_h)}\\ 
& \quad = \lambda(x)\{f(x)-\phi_{\lambda,C}(x)+ g'(\psi_{\lambda,C}(x))\left(h(x)-\psi_{\lambda,C}(x)\right)\}.
\label{prop-calc-eq-cond1-3}
\end{align}
The result follows from \eqref{prop-calc-eq-cond1-1}, \eqref{prop-calc-eq-cond1-2} and \eqref{prop-calc-eq-cond1-3}.
\end{proof}

\begin{lem} \label{prop-calc-eq-cond2} 
For any $\tau^{\lambda,C},\tau^{\eta,D} \in \mathcal{N}$ and $x \in \mbox{int}(C^c)\cap D$,
\begin{align*}
\lim_{h\searrow 0}\frac{J_{{\lambda,C}}(x)-J_{\tau^{\lambda,C}\diamond \tau^{\eta,D}(h)}(x)}{\E_x(\tau_h)}
=  -A_Xf(x) - g'(h(x))A_Xh(x).
\end{align*}
\end{lem}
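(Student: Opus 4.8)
The plan is to reduce $\tau^{\lambda,C}\diamond\tau^{\eta,D}(h)$ to an explicit stopping time and then pass to the limit using the characteristic operator together with a Taylor expansion of $g$. First, since $x\in\mbox{int}(C^c)$ there is $\bar h>0$ with $(x-\bar h,x+\bar h)\subseteq C^c$, and since $x\in D$ with $D$ open we may, after shrinking $\bar h$, also assume $\tau_h<\tau^D$ for $0<h\le\bar h$, exactly as at the start of the proof of Lemma \ref{main-lemma}; in particular $\{\tau^{\eta,D}\le\tau_h\}=\{\tau^\eta\le\tau_h\}$ and $\tau^{\eta,D}=\tau^\eta$ on this event. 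For such $h$, on $\{\tau^\eta\le\tau_h\}$ the deviation equals $\tau^\eta$, while on $\{\tau^\eta>\tau_h\}$ continuity of the paths forces $X_{\tau_h}\in\{x-h,x+h\}\subseteq C^c$, so $\tau^{\lambda,C}\circ\theta_{\tau_h}=0$ and the deviation equals $\tau_h$. Hence $\tau^{\lambda,C}\diamond\tau^{\eta,D}(h)=\tau^\eta\wedge\tau_h$ for $0<h\le\bar h$. Moreover, since $x\in C^c$ we have $\tau^{\lambda,C}=0$ when started from $x$, so $\phi_{\lambda,C}(x)=f(x)$, $\psi_{\lambda,C}(x)=h(x)$ and $J_{\lambda,C}(x)=f(x)+g(h(x))$; consequently the numerator in the statement equals $\bigl(f(x)-\E_x(f(X_{\tau^\eta\wedge\tau_h}))\bigr)+\bigl(g(h(x))-g(\E_x(h(X_{\tau^\eta\wedge\tau_h})))\bigr)$.

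The second step is to show that for every $k\in\mathcal{C}^2(E)$ (which we apply to $f$ and to $h$, both $\mathcal{C}^2$ by Assumption \ref{ass:general-lemmas}),
\[
\lim_{h\searrow 0}\frac{\E_x(k(X_{\tau^\eta\wedge\tau_h}))-k(x)}{\E_x(\tau_h)}=A_Xk(x).
\]
To see this write $\E_x(k(X_{\tau^\eta\wedge\tau_h}))-k(x)=\E_x\bigl(k(X_{\tau^\eta\wedge\tau_h})-k(X_{\tau_h})\bigr)+\bigl(\E_x(k(X_{\tau_h}))-k(x)\bigr)$. The second summand divided by $\E_x(\tau_h)$ tends to $A_Xk(x)$ directly from the definition of the characteristic operator (recall $k\in\mathcal{C}^2$). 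The first summand vanishes on $\{\tau^\eta\ge\tau_h\}$, and on $\{\tau^\eta<\tau_h\}$ its modulus is bounded by the oscillation of $k$ on $[x-h,x+h]$, using that $X$ stays in $[x-h,x+h]$ on $[0,\tau_h]$; since $\mathbb{P}_x(\tau^\eta<\tau_h)=\E_x\bigl(1-e^{-\int_0^{\tau_h}\eta(X_s)\,ds}\bigr)\le\bigl(\sup_{[x-h,x+h]}\eta\bigr)\E_x(\tau_h)$ by the inequality $1-e^{-y}\le y$, dividing the first summand by $\E_x(\tau_h)$ leaves a quantity bounded by $\bigl(\sup_{[x-h,x+h]}\eta\bigr)$ times the oscillation of $k$ on $[x-h,x+h]$, which tends to $0$ by continuity of $\eta$ and $k$. (This is the same flavour of limit computation carried out in detail in the proof of Lemma \ref{main-lemma}.)

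Finally, put $\delta_h:=\E_x(h(X_{\tau^\eta\wedge\tau_h}))-h(x)$; by the previous step $\delta_h\to0$ and $\delta_h/\E_x(\tau_h)\to A_Xh(x)$. A second-order Taylor expansion of $g$, available since $g\in\mathcal{C}^3(\mathbb{R})$, gives $g(h(x)+\delta_h)-g(h(x))=g'(h(x))\delta_h+O(\delta_h^2)$, and $O(\delta_h^2)/\E_x(\tau_h)=O(\delta_h)\cdot(\delta_h/\E_x(\tau_h))\to0$, so $\bigl(g(\E_x(h(X_{\tau^\eta\wedge\tau_h})))-g(h(x))\bigr)/\E_x(\tau_h)\to g'(h(x))A_Xh(x)$. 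Combining this with the $k=f$ case of the second step and the expression for the numerator from the first step yields $-A_Xf(x)-g'(h(x))A_Xh(x)$, as claimed.

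There is no serious obstacle here: the analytic content is essentially that of the proof of Lemma \ref{main-lemma} together with a routine Taylor expansion, as anticipated in Section \ref{sec:equilibrium-conditions}. The only points demanding a little care are the path-continuity argument that pins $X_{\tau_h}$ into $C^c$ in the reduction step, and the uniform-in-$t$ control of the error term $\E_x\bigl(k(X_{\tau^\eta\wedge\tau_h})-k(X_{\tau_h})\bigr)$ in the second step; both are handled by the oscillation estimate above.
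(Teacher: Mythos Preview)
Your proof is correct and follows the same overall architecture as the paper's: reduce $\tau^{\lambda,C}\diamond\tau^{\eta,D}(h)$ to $\tau^\eta\wedge\tau_h$ using openness of $D$ and $\mbox{int}(C^c)$, use $J_{\lambda,C}(x)=f(x)+g(h(x))$ to rewrite the numerator, and treat the $g$-part via a Taylor expansion. The one point of departure is how you evaluate $\lim_{h\searrow 0}\bigl(\E_x(k(X_{\tau^\eta\wedge\tau_h}))-k(x)\bigr)/\E_x(\tau_h)$ for $k=f,h$: the paper applies It\^o's formula to write this as $\E_x\bigl(\int_0^{\tau^\eta\wedge\tau_h}A_Xk(X_t)\,dt\bigr)/\E_x(\tau_h)$ and passes to the limit by continuity of $A_Xk$, whereas you split off the term $\E_x(k(X_{\tau_h}))-k(x)$ (handled directly by the definition of $A_X$) and control the remainder $\E_x\bigl(k(X_{\tau^\eta\wedge\tau_h})-k(X_{\tau_h})\bigr)$ by an oscillation bound combined with the Cox-process estimate $\mathbb{P}_x(\tau^\eta<\tau_h)\le\bigl(\sup_{[x-h,x+h]}\eta\bigr)\E_x(\tau_h)$. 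Your variant avoids invoking It\^o's formula and is arguably more self-contained; the paper's version is a line shorter but implicitly needs the same kind of estimate to pass from the integral over $[0,\tau^\eta\wedge\tau_h]$ to the limit $A_Xk(x)$. Either way the analytic content is the same.
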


\begin{proof} Since $D$ and $\mbox{int}(C^c)$ are open it follows that there exists a constant $\bar{h}>0$ such that, for $0<h\leq \bar{h}$, 
\begin{align}
\tau^{\lambda,C} \diamond \tau^{\eta,D}(h) 
& = I_{\{\tau^{\eta,D} \leq \tau_h\}}\tau^{\eta,D} + I_{\{\tau^{\eta,D} > \tau_h\}}({\tau^{\lambda,C}}\circ \theta_{\tau_h}+\tau_h)\\
& = I_{\{\tau^{\eta} \leq \tau_h\}}\tau^{\eta} + I_{\{\tau^{\eta} > \tau_h\}}({\tau^{\lambda,C}}\circ \theta_{\tau_h}+\tau_h)\\
&= \tau^{\eta} \wedge \tau_h.
\end{align}
Since $x \in \mbox{int}(C^c)\cap D$ it follows that
\begin{align}
& J_{\tau^{\lambda,C}}(x)-J_{\tau^{\lambda,C}\diamond \tau^{\eta,D}(h)}(x)\\
& = f(x) - \phi_{\tau^{\lambda,C}\diamond \tau^{\eta,D}(h)}(x) + g(h(x))-g(\psi_{\tau^{\lambda,C}\diamond \tau^{\eta,D}(h)}(x))
\label{prop-3.4eq}.
\end{align}
Use It\^{o}'s formula to rewrite the first part of \eqref{prop-3.4eq} as 
\begin{align}
f(x) - \phi_{\tau^{\lambda,C}\diamond \tau^{\eta,D}(h)}(x)
&= f(x)-\E_x\left(f\left(X_{\tau^{\lambda,C}\diamond \tau^{\eta,D}(h)}\right)\right)\\ 
&= -\E_x\left(\int_0^{{\tau^{\eta} \wedge \tau_h}}A_Xf(X_t)dt\right).
\end{align}
It follows that  
\[\lim_{h\searrow 0}\frac{f(x) - \phi_{\tau^{\lambda,C}\diamond \tau^{\eta,D}(h)}(x)}{\E_x(\tau_h)} = -A_Xf(x).\] 
Use similar arguments and standard Taylor expansion to rewrite the second part of \eqref{prop-3.4eq}
\begin{align} 
g(h(x)) - g(\psi_{\tau^{\lambda,C}\diamond \tau^{\eta,D}(h)}(x))  
= - g'(h(x))\E_x\left(  \int_0^{{\tau^{\eta} \wedge \tau_h}}A_X h(X_t)dt   \right) + o(\E_x(\tau_h)).
\end{align}
Thus,
\[\lim_{h\searrow 0}\frac{g(h(x)) - g(\psi_{\tau^{\lambda,C}\diamond \tau^{\eta,D}(h)}(x))}{\E_x(\tau_h)} = -g'(h(x))A_Xh(x).\] 
The result follows.
\end{proof}

\begin{lem}\label{smooth-fit-lemma}  For any $\tau^{\lambda,C},\tau^{\eta,D} \in \mathcal{N}$ and $x \in \partial C \cap D$,
\begin{align}
& \liminf_{h\searrow 0}\frac{J_{ {\lambda,C}}(x)-J_{\tau^{\lambda,C}\diamond \tau^{\eta,D}(h)}(x)}{\E_x(\tau_h)} \\
& \enskip= 
\liminf_{h\searrow 0}\frac{
\phi_{ {\lambda,C}}(x)-\E_x\left(\phi_{ {\lambda,C}}(X_{\tau_h})\right)
+g(\psi_{ {\lambda,C}}(x))- g\left( \E_x\left(\psi_{ {\lambda,C}}(X_{\tau_h})\right)\right) 
}{\E_x(\tau_h)}.
\label{smooth-fit-lemma:1}
\end{align}
\end{lem}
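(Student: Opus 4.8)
The plan is to exploit that the endpoint $x\in\partial C$ forces \emph{immediate} stopping of $\tau^{\lambda,C}$ under $\mathbb P_x$, so that deviating via $\tau^{\eta,D}$ over the short interval $[0,\tau_h]$ only shifts the payoff by terms of order $o(\E_x(\tau_h))$. First I would record the boundary values: since $C$ is open and $x\in\partial C$ we have $x\notin C$, hence $\tau^C=0$ and $\tau^{\lambda,C}=\tau^\lambda\wedge\tau^C=0$ $\mathbb P_x$-a.s.; consequently $\phi_{\lambda,C}(x)=f(x)$, $\psi_{\lambda,C}(x)=h(x)$ and $J_{\lambda,C}(x)=\phi_{\lambda,C}(x)+g(\psi_{\lambda,C}(x))=f(x)+g(h(x))$. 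Also, because $x\in D$ and $D$ is open, there is $\bar h>0$ with $\tau_h<\tau^D$ a.s. for $0<h\le\bar h$, so $\tau^{\lambda,C}\diamond\tau^{\eta,D}(h)$ coincides with the object analyzed in Lemma \ref{main-lemma}.

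Next I would apply Lemma \ref{main-lemma} together with the strong Markov identity $\phi_{\tau^{\lambda,C}\circ\theta_{\tau_h}+\tau_h}(x)=\E_x(\phi_{\lambda,C}(X_{\tau_h}))$ (established in the proof of Lemma \ref{main-lemma2}) to get
\[
\frac{\E_x\big(\phi_{\lambda,C}(X_{\tau_h})\big)-\phi_{\tau^{\lambda,C}\diamond\tau^{\eta,D}(h)}(x)}{\E_x(\tau_h)}\;\longrightarrow\;\eta(x)\big(\phi_{\lambda,C}(x)-f(x)\big)=0,
\]
where the vanishing of the limit is precisely the point where $x\in\partial C$ is used. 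The identical computation carried out for $h$ and $\psi_{\lambda,C}$ in place of $f$ and $\phi_{\lambda,C}$ (cf.\ \eqref{proof-ref} and the $\psi$-analogue of Lemma \ref{main-lemma}), combined with $\psi_{\lambda,C}(x)=h(x)$, gives likewise $\E_x(\psi_{\lambda,C}(X_{\tau_h}))-\psi_{\tau^{\lambda,C}\diamond\tau^{\eta,D}(h)}(x)=o(\E_x(\tau_h))$. Hence $\phi_{\tau^{\lambda,C}\diamond\tau^{\eta,D}(h)}(x)=\E_x(\phi_{\lambda,C}(X_{\tau_h}))+o(\E_x(\tau_h))$ and $\psi_{\tau^{\lambda,C}\diamond\tau^{\eta,D}(h)}(x)=\E_x(\psi_{\lambda,C}(X_{\tau_h}))+o(\E_x(\tau_h))$. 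Since $\psi_{\lambda,C}$ is continuous (admissibility, Definition \ref{def:mix_strat}) and $X_{\tau_h}\to x$, we have $\E_x(\psi_{\lambda,C}(X_{\tau_h}))\to\psi_{\lambda,C}(x)$; as $g\in\mathcal C^1$, a mean value argument with the derivative factor bounded near $\psi_{\lambda,C}(x)$ yields $g(\psi_{\tau^{\lambda,C}\diamond\tau^{\eta,D}(h)}(x))=g(\E_x(\psi_{\lambda,C}(X_{\tau_h})))+o(\E_x(\tau_h))$.

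Putting these together with $J_{\lambda,C}(x)=\phi_{\lambda,C}(x)+g(\psi_{\lambda,C}(x))$ gives
\[
J_{\lambda,C}(x)-J_{\tau^{\lambda,C}\diamond\tau^{\eta,D}(h)}(x)=\phi_{\lambda,C}(x)-\E_x\big(\phi_{\lambda,C}(X_{\tau_h})\big)+g(\psi_{\lambda,C}(x))-g\big(\E_x(\psi_{\lambda,C}(X_{\tau_h}))\big)+o(\E_x(\tau_h)),
\]
and dividing by $\E_x(\tau_h)$ and taking $\liminf_{h\searrow 0}$ — the $o(\E_x(\tau_h))/\E_x(\tau_h)$ term vanishing — produces exactly \eqref{smooth-fit-lemma:1} (note the right-hand numerator is $-a(x,h)$ of \eqref{newlabelKR}). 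The main obstacle is the estimate in the second paragraph: showing that the first-order contribution of the $\eta$-randomization over $[0,\tau_h]$ is genuinely $o(\E_x(\tau_h))$ rather than of exact order $\E_x(\tau_h)$; this is where the boundary identities $\phi_{\lambda,C}(x)-f(x)=\psi_{\lambda,C}(x)-h(x)=0$ are essential, the remaining steps being routine Taylor expansion and continuity arguments.
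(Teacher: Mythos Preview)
Your argument is correct and is essentially the paper's own proof: both decompose the numerator by inserting the intermediate term $J_{\tau^{\lambda,C}\circ\theta_{\tau_h}+\tau_h}(x)=\E_x(\phi_{\lambda,C}(X_{\tau_h}))+g(\E_x(\psi_{\lambda,C}(X_{\tau_h})))$, show that the ``deviation'' piece $J_{\tau^{\lambda,C}\diamond\tau^{\eta,D}(h)}(x)-J_{\tau^{\lambda,C}\circ\theta_{\tau_h}+\tau_h}(x)$ is $o(\E_x(\tau_h))$ precisely because the boundary identities $\phi_{\lambda,C}(x)=f(x)$ and $\psi_{\lambda,C}(x)=h(x)$ kill the first-order coefficient $\eta(x)(\phi_{\lambda,C}(x)-f(x))$ (and its $\psi$-analogue), and then identify the remaining piece with $-a(x,h)$. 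The only cosmetic difference is that you invoke Lemma~\ref{main-lemma} and its $\psi$-version directly, whereas the paper re-derives the relevant integral representation ``using the same arguments as for \eqref{proof-ref}''; the Taylor step for $g$ is handled identically in both.
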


\begin{proof} 
Here we use the temporary notation $(A),(B)$ etc defined below. Write
\begin{align}
&J_{ {\lambda,C}}(x)-J_{\tau^{\lambda,C}\diamond \tau^{\eta,D}(h)}(x)\\
&= J_{ {\lambda,C}}(x)-J_{{\tau^{\lambda,C}}\circ \theta_{\tau_h}+\tau_h}(x)-(J_{\tau^{\lambda,C}\diamond \tau^{\eta,D}(h)}(x)-J_{{\tau^{\lambda,C}}\circ \theta_{\tau_h}+\tau_h}(x))\\
&= (A)-(B).
\end{align}
Write,
\begin{align}
&(B):= J_{\tau^{\lambda,C}\diamond \tau^{\eta,D}(h)}(x)-J_{{\tau^{\lambda,C}}\circ \theta_{\tau_h}+\tau_h}(x)\\
&\enskip =\phi_{\tau^{\lambda,C}\diamond \tau^{\eta,D}(h)}(x)-\phi_{{\tau^{\lambda,C}}\circ \theta_{\tau_h}+\tau_h}(x) + 
g(\psi_{\tau^{\lambda,C}\diamond \tau^{\eta,D}(h)}(x))-g(\psi_{{\tau^{\lambda,C}}\circ \theta_{\tau_h}+\tau_h}(x))\\
&\enskip =(B1) + (B2).
\end{align}
Use that $x\in D$ and the same arguments as for \eqref{proof-ref} to see that there exists a constant $\bar{h}>0$ such that, for each $0< h\leq\bar{h}$,
\begin{align}
 (B1)&:= \phi_{\tau^{\lambda,C}\diamond \tau^{\eta,D}(h)}(x)-\phi_{{\tau^{\lambda,C}}\circ \theta_{\tau_h}+\tau_h}(x)\\
& = \E_x\left(\int_0^{\tau_h}\eta(X_t)e^{-\int_0^t\eta(X_s)ds}\left(f(X_t)-\phi_{{\lambda,C}}\left(X_{\tau_h}\right)
\right)dt\right).
\end{align}
Similarly, using Taylor expansion, we obtain
\begin{align}
& (B2):= g(\psi_{\tau^{\lambda,C}\diamond \tau^{\eta,D}(h)}(x))-g(\psi_{{\tau^{\lambda,C}}\circ \theta_{\tau_h}+\tau_h}(x))\\
& \enskip = g\left( 
\psi_{{\tau^{\lambda,C}}\circ \theta_{\tau_h}+\tau_h}(x)
+ \E_x\left(\int_0^{\tau_h}\eta(X_t)e^{-\int_0^t\eta(X_s)ds}\left(h(X_t)-\psi_{ {\lambda,C}}\left(X_{\tau_h}\right)
\right)dt\right)	
\right)\\
& \enskip \enskip\enskip  - g\left(\psi_{{\tau^{\lambda,C}}\circ \theta_{\tau_h}+\tau_h}(x)\right)\\
&\enskip  = g'(\psi_{{\tau^{\lambda,C}}\circ \theta_{\tau_h}+\tau_h}(x))\E_x\left(\int_0^{\tau_h}\eta(X_t)e^{-\int_0^t\eta(X_s)ds}\left(h(X_t)-\psi_{ {\lambda,C}}\left(X_{\tau_h}\right)
\right)dt\right)\\
&\enskip \enskip\enskip  
+ o(\E_x(\tau_h))\\
& \enskip = g'(   \E_x\left(\psi_{\lambda,C}(X_{\tau_h})\right))\E_x\left(\int_0^{\tau_h}\eta(X_t)e^{-\int_0^t\eta(X_s)ds}\left(h(X_t)-\psi_{ {\lambda,C}}\left(X_{\tau_h}\right)
\right)dt\right)\\
&\enskip \enskip \enskip + o(\E_x(\tau_h)).
\end{align}
Since $\phi_{ {\lambda,C}}(x)-f(x)=0$ and $\psi_{ {\lambda,C}}(x)-h(x)=0$ for $x \in \partial C$, and these functions are continuous (cf. admissibility), it follows that
\begin{align}
\liminf_{h\searrow 0}\frac{-(B)}{\E_x(\tau_h)} = \liminf_{h\searrow 0}\frac{-(B1)-(B2)}{\E_x(\tau_h)} = 0.
\end{align} 
Write
\begin{align} 
(A):&= J_{ {\lambda,C}}(x)-J_{{\tau^{\lambda,C}}\circ \theta_{\tau_h}+\tau_h}(x) \\
& = 
\phi_{ {\lambda,C}}(x) + g(\psi_{ {\lambda,C}}(x)) 
-
\left(\phi_{{\tau^{\lambda,C}}\circ \theta_{\tau_h}+\tau_h}(x) 
+g(\psi_{{\tau^{\lambda,C}}\circ \theta_{\tau_h}+\tau_h}(x)) 
\right)\\
& = 
\phi_{ {\lambda,C}}(x) + g(\psi_{ {\lambda,C}}(x))
-
\left(\E_x\left(\phi_{ {\lambda,C}}(X_{\tau_h})\right)
+ g(\E_x\left(\psi_{ {\lambda,C}}(X_{\tau_h})\right))\right).
\end{align}
The result follows.
\end{proof}

\begin{proof} (of Theorem \ref{main-thm-imm-stp}). In this proof we use the notation $\hat\tau= \tau^{\lambda,C}$. 
Let us first suppose that $\hat\tau$ is an equilibrium stopping time, i.e. that it satisfies \eqref{eqdef2} for each $x \in E$ and each $\tau^{\eta,D} \in \mathcal{N}$, and show that this implies that conditions \eqref{mainthmcond1}--\eqref{mainthmcond5} are satisfied. Let us consider different cases for $x$.

\begin{itemize} 
\item  $x\in C$: Set  $D=C$ and use Lemma \ref{prop-calc-eq-cond1} to see that \eqref{eqdef2} can in this case be written as 
\[(\lambda(x)-\eta(x))\{f(x)-\phi_{\lambda,C}(x)+ g'(\psi_{\lambda,C}(x))\left(h(x)-\psi_{\lambda,C}(x)\right)\}\geq 0.\]
It follows that conditions \eqref{mainthmcond3} and \eqref{mainthmcond4} are satisfied. To see this recall that the non-negative function $\eta$ can be chosen so that $\eta(x)$ is arbitrarily large or $\eta(x)=0$.

Now set $D=\emptyset$, which implies that the numerator of the left side of \eqref{eqdef2} is  $J_{{\lambda,C}}(x) - f(x) - g(h(x))$, which does not depend on the constant $h$. This implies that  \eqref{mainthmcond1} holds. 

\item  $x\in \mbox{int}(C^c)$: Set  $D=\mbox{int}(C^c)$ and use Lemma \ref{prop-calc-eq-cond2} to see that \eqref{eqdef2} can in this case be written as $-A_Xf(x) - g'(h(x))A_X h(x)\geq 0$. It follows that condition \eqref{mainthmcond2} is satisfied. 

\item $x\in \partial C$: Set $D=E$ and use Lemma \ref{smooth-fit-lemma} to see that the left side of \eqref{eqdef2} is equal to the left side of the inequality in \eqref{mainthmcond5}, which directly implies that condition \eqref{mainthmcond5} holds.
\end{itemize}

Le us now suppose that $\hat\tau$ solves the system \eqref{mainthmcond1}--\eqref{mainthmcond5} and show that this implies that $\hat\tau$ is an equilibrium stopping time, i.e. that it satisfies \eqref{eqdef2} for each $x \in E$ and each $\tau^{\eta,D}\in \mathcal{N}$. Let us consider an arbitrary $\tau^{\eta,D}\in \mathcal{N}$ and different cases for $x$.
\begin{itemize} 

\item $x\in D$: 
\begin{itemize} 
\item  If $x \in C$ and $\lambda(x)>0$, then the left side of \eqref{eqdef2} is, by Lemma \ref{prop-calc-eq-cond1}, equal to $(\lambda(x)-\eta(x))\{f(x)-\phi_{\lambda,C}(x)+ g'(\psi_{\lambda,C}(x))\left(h(x)-\psi_{\lambda,C}(x)\right)\}$ and hence \eqref{mainthmcond3} implies that \eqref{eqdef2} must hold. 

\item  If $x \in C$ and $\lambda(x)=0$, then the left side of \eqref{eqdef2} is, by Lemma \ref{prop-calc-eq-cond1}, equal to $-\eta(x)\{f(x)-\phi_{\lambda,C}(x)+ g'(\psi_{\lambda,C}(x))\left(h(x)-\psi_{\lambda,C}(x)\right)\}$ and hence \eqref{mainthmcond4} implies that \eqref{eqdef2} must hold. 

\item If $x \in \mbox{int}(C^c)$, then Lemma \ref{prop-calc-eq-cond2} implies that the left side of \eqref{eqdef2} is equal to 
$-A_Xf(x) - g'(h(x))A_X h(x)$ and hence \eqref{mainthmcond2} implies that  \eqref{eqdef2} must hold. 

\item If $x \in \partial C$, then Lemma \ref{smooth-fit-lemma} and  \eqref{mainthmcond5} implies that  \eqref{eqdef2} must hold. 

\end{itemize}
\item $x\in D^c$: The numerator of the left side of \eqref{eqdef2} is in this case $J_{{\lambda,C}}(x) - f(x) - g(h(x))$ and hence \eqref{mainthmcond1}  implies that  \eqref{eqdef2} holds for $x\in C$. In case $x\notin C$ then the numerator is zero. 
\end{itemize}
\end{proof}

\begin{lem}\label{lemma-tau-h-oct}  For any $x \in \state$ holds, 
\begin{align}
\lim_{h\searrow 0} \frac{h^2}{\E_x(\tau_h)} = \sigma^2(x) 
\enskip \mbox{ and } \enskip 
\lim_{h\searrow 0} \frac{\E_x(\tau_h^2)}{\E_x(\tau_h)} = 0. 
\end{align}
\end{lem}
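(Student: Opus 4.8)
The plan is to deduce both limits from It\^{o}'s formula together with optional stopping, fixing $x\in\state$ throughout. The key point is that on $[0,\tau_h]$ the process $X$ stays in $[x-h,x+h]$, so by continuity of $\mu,\sigma$ and $\sigma(x)>0$ there is an $h_0>0$ with $[x-h_0,x+h_0]\subset\state$ such that $2(y-x)\mu(y)+\sigma^2(y)\ge\tfrac12\sigma^2(x)>0$ whenever $|y-x|\le h\le h_0$. Since a limit as $h\searrow 0$ only concerns small $h$, I restrict to $h\le h_0$ from now on.

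\emph{First limit.} I would apply It\^{o}'s formula to $(X_{t\wedge\tau_h}-x)^2$; the stochastic integral is a genuine martingale because its quadratic variation at a deterministic time $t$ is bounded by a deterministic constant (using $|X_s-x|\le h$ and boundedness of $\sigma$ on $[x-h,x+h]$). Taking expectations and using optional stopping gives
\[
\E_x\big((X_{t\wedge\tau_h}-x)^2\big)=\E_x\Big(\int_0^{t\wedge\tau_h}\big(2(X_s-x)\mu(X_s)+\sigma^2(X_s)\big)\,ds\Big)\ \ge\ \tfrac12\sigma^2(x)\,\E_x(t\wedge\tau_h).
\]
The left side is $\le h^2$, so letting $t\to\infty$ and using monotone convergence yields $\E_x(\tau_h)\le 2h^2/\sigma^2(x)<\infty$; in particular $\tau_h<\infty$ a.s., hence $(X_{\tau_h}-x)^2=h^2$ a.s. by path continuity. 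Letting $t\to\infty$ in the displayed identity (bounded convergence on the left, dominated convergence on the right with dominating function $c_h\,\tau_h$, where $c_h$ bounds the integrand in modulus) gives $h^2=\E_x\big(\int_0^{\tau_h}(2(X_s-x)\mu(X_s)+\sigma^2(X_s))\,ds\big)$. Dividing by $\E_x(\tau_h)\in(0,\infty)$ and subtracting $\sigma^2(x)=\E_x\big(\int_0^{\tau_h}\sigma^2(x)\,ds\big)/\E_x(\tau_h)$ bounds $\big|h^2/\E_x(\tau_h)-\sigma^2(x)\big|$ by $\sup_{|y-x|\le h}\big|2(y-x)\mu(y)+\sigma^2(y)-\sigma^2(x)\big|$, which tends to $0$ as $h\searrow 0$ by continuity.

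\emph{Second limit.} I would apply the time-dependent It\^{o} formula to $(X_t-x)^2\,t$ (again the stopped stochastic integral is a true martingale), which after taking expectations and optional stopping gives
\[
\E_x\big((X_{t\wedge\tau_h}-x)^2(t\wedge\tau_h)\big)=\E_x\Big(\int_0^{t\wedge\tau_h}\Big[(X_s-x)^2+s\,\big(2(X_s-x)\mu(X_s)+\sigma^2(X_s)\big)\Big]\,ds\Big).
\]
The left side is $\le h^2\,\E_x(\tau_h)$, while on the right $(X_s-x)^2\ge 0$ and the bracketed intensity term is $\ge\tfrac12\sigma^2(x)$, so the right side is at least $\tfrac12\sigma^2(x)\,\E_x\big(\int_0^{t\wedge\tau_h}s\,ds\big)=\tfrac14\sigma^2(x)\,\E_x\big((t\wedge\tau_h)^2\big)$. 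Hence $\E_x\big((t\wedge\tau_h)^2\big)\le 4h^2\E_x(\tau_h)/\sigma^2(x)$ for every $t$, and monotone convergence gives $\E_x(\tau_h^2)\le 4h^2\E_x(\tau_h)/\sigma^2(x)$. Dividing by $\E_x(\tau_h)\in(0,\infty)$ yields $\E_x(\tau_h^2)/\E_x(\tau_h)\le 4h^2/\sigma^2(x)\to 0$ as $h\searrow 0$.

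The step I expect to be the most delicate is not any single calculation but the bookkeeping around it: one must first extract finiteness of $\E_x(\tau_h)$ and $\tau_h<\infty$ a.s.\ before the It\^{o} identities can be read as exact equalities, one must check that the stochastic integrals are true (not merely local) martingales, and one must justify each passage $t\to\infty$ via monotone, dominated or bounded convergence with the $h$-dependent deterministic bounds coming from $|X_s-x|\le h$ on $[0,\tau_h]$. The choice of $h_0$ making $2(y-x)\mu(y)+\sigma^2(y)$ bounded away from $0$ on $[x-h,x+h]$ --- which rests on $\sigma(x)>0$ and continuity of $\mu,\sigma$ --- is what makes every estimate go through.
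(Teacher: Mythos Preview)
Your proof is correct and follows essentially the same route as the paper: apply It\^{o}'s formula to $(y-x)^2$ (resp.\ $t(y-x)^2$) together with optional stopping, use the continuity of $\mu,\sigma$ and $\sigma(x)>0$ to control the drift term on $[x-h,x+h]$, and read off the limits. The only cosmetic difference is that the paper packages the argument via parametrized test functions $F(t,y)=a(y-x)^2-t$ and $F(t,y)=at(y-x)^2-t^2$ with $a\sigma^2(x)>1$ (resp.\ $>2$) to obtain squeeze inequalities, whereas you first extract the exact identity $h^2=\E_x\!\int_0^{\tau_h}(2(X_s-x)\mu(X_s)+\sigma^2(X_s))\,ds$ and then bound the defect by $\sup_{|y-x|\le h}|2(y-x)\mu(y)+\sigma^2(y)-\sigma^2(x)|$; your version is slightly more explicit about the integrability and martingale checks, which the paper leaves implicit.
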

\begin{proof} Consider a fixed $x \in \state$, a constant $a>0$ and let 
$F(t,y):=a(y-x)^2-t$. 
Using simple calculations we find that there exists a constant $\bar{h}$ such that for $y\in (x-\bar{h},x+\bar{h})$ holds
$\left(\frac{\partial}{\partial t} + A_X\right)F(t,y) =  - 1 + a\sigma^2(y) + \mu(y)2a(y-x)\geq 0$ whenever $a\sigma^2(x)>1$; 
which with It\^{o}'s formula and the optional sampling theorem gives 
$\E_x(F(\tau_h,X_{\tau_h})) = ah^2-\E_x(\tau_h)\geq 0$ for $h<\bar{h}$. 
Hence, 
$\liminf_{h\searrow 0} \frac{h^2}{\E_x(\tau_h)} \geq \frac{1}{a}$. 
With the same arguments we find that 
$\limsup_{h\searrow 0} \frac{h^2}{\E_x(\tau_h)} \leq \frac{1}{b}$ for $b\sigma^2(x)<1$ 
and the first claim of the lemma follows directly. 

Now let $F(t,y):=at(y-x)^2-t^2$. Then there exists a constant $\bar{h}$ such that for $y\in (x-\bar{h},x+\bar{h})$ holds 
$\left(\frac{\partial}{\partial t} + A_X\right)F(t,y) =   a(y-x)^2 + t (a (2\mu(y)(y-x) +\sigma^2(y))-2)>0$, for $a\sigma^2(x)>2$. Using the same arguments as above we can now show that
$\E_x(a\tau_h h^2-\tau_h^2)\geq 0$ for $h<\bar{h}$. The second claim\deleted[id=kri,remark={}]{s} follows. 

\end{proof}

\bibliographystyle{abbrv}
\bibliography{time-incon_stopping}

\end{document}